\setlist[enumerate,1]{label=(\arabic*)}
\theoremstyle{plain}
\newtheorem{theo}{Theorem}[section]
\theoremstyle{definition}
\newtheorem*{exa*}{Example}
\newtheorem{lem}[theo]{Lemma}
\newtheorem*{lem*}{Lemma}
\newtheorem{pro}[theo]{Proposition}
\newtheorem*{pro*}{Proposition}
\newtheorem*{cla*}{Claim}
\newtheorem*{lie*}{Lie}
\theoremstyle{remark}
\newtheorem{rem}[theo]{Remark}
\newtheorem*{rem*}{Remark}
\newtheorem*{note*}{Note}
\crefname{cor}{Corollary}{Corollaries}
\crefname{pro}{Proposition}{Propositions}
\crefname{lem}{Lemma}{Lemmas}
\crefname{theo}{Theorem}{Theorems}
\crefname{rem}{Remark}{Remarks}
\crefname{section}{Section}{Sections}
\crefname{equation}{formula}{formulas}
\crefname{enumi}{}{}
\numberwithin{equation}{section}
\newcommand\intd{\mathop{}\!\mathrm{d}}
\newcommand\df\emph
\newcommand\ind[1]{1_{#1}}
\newcommand\tx\text
\newcommand\f\frac
\newcommand\loc{\mathrm{loc}}
\newcommand\Q{\mathcal Q}
\newcommand\B{\mathcal B}
\newcommand\D{\mathcal D}
\newcommand\Bu{\widetilde{\mathcal B}}
\newcommand\Bc{\mathcal B^{\mathrm c}}
\newcommand\avint\fint
\DeclareMathOperator\var{var}
\DeclareMathOperator\sle{l}
\DeclareMathOperator\prt{prt}
\let\div\relax
\DeclareMathOperator\div{div}
\newcommand\M{\mathrm M}
\newcommand\Mu{\widetilde{\mathrm M}}
\newcommand\Md{\mathrm M^{\mathrm d}}
\newcommand\Mc{\mathrm M^{\mathrm c}}
\newcommand\BV{\mathrm{BV}}
\newcommand\ub\underbrace
\newcommand\la\langle
\newcommand\ra\rangle
\newcommand\comp\complement
\newcommand\mb[1]{\partial_*{#1}}
\newcommand\cl[1]{\overline{#1}}
\newcommand\sm[1]{\mathcal{H}^{d-1}(#1)}
\newcommand\lm[1]{\mathcal{L}(#1)}
\newcommand\lmb[1]{\mathcal{L}\Bigl(#1\Bigr)}
\begin{document}

\title{Endpoint Sobolev Bounds for Fractional Hardy-Littlewood Maximal Operators}
\author{Julian Weigt\footnote{
Aalto University,
Department of Mathematics and Systems Analysis,
P.O.\ Box 11100,
FI-00076 Aalto University,
Finland,
\texttt{julian.weigt@aalto.fi}
}
}
\maketitle

\begin{abstract}
Let \(0<\alpha<d\) and \(1\leq p<d/\alpha\).
We present a proof that for all \(f\in W^{1,p}(\mathbb{R}^d)\)
both the centered and the uncentered Hardy-Littlewood fractional maximal operator
\(\M_\alpha f\) are weakly differentiable and
\(
\|\nabla\M_\alpha f\|_{p^*}
\leq C_{d,\alpha,p}
\|\nabla f\|_p
,
\)
where
\(
p^*
=
(p^{-1}-\alpha/d)^{-1}
.
\)
In particular it covers the endpoint case \(p=1\) for \(0<\alpha<1\)
where the bound was previously unknown.
For \(p=1\) we can replace \(W^{1,1}(\mathbb{R}^d)\) by \(\BV(\mathbb{R}^d)\).
The ingredients used are
a pointwise estimate for the gradient of the fractional maximal function,
the layer cake formula,
a Vitali type argument,
a reduction from balls to dyadic cubes,
the coarea formula,
a relative isoperimetric inequality
and an earlier established result for \(\alpha=0\) in the dyadic setting.
We use that for \(\alpha>0\) the fractional maximal function does not use certain small balls.
For \(\alpha=0\) the proof collapses.

\end{abstract}

\begingroup
\renewcommand\thefootnote{}\footnotetext{%
2020 \textit{Mathematics Subject Classification.} 42B25,26B30.\\%
\textit{Key words and phrases.} Fractional maximal function, variation, dyadic cubes.%
}%
\addtocounter{footnote}{-1}%
\endgroup

\section{Introduction}
For \(f\in L^1_\loc(\mathbb{R}^d)\) and a ball or cube \(B\), we denote
\[
f_B
=
\f1{\lm B}\int_B|f|
.
\]
The centered Hardy-Littlewood maximal function is defined by
\[
\Mc f(x) = \sup_{r>0}f_{B(x,r)},
\]
and the uncentered Hardy-Littlewood maximal function is defined by
\[
\Mu f(x) = \sup_{B\ni x}f_B
\]
where the supremum is taken over all balls that contain \(x\).
The regularity of a maximal operator was first studied by Kinnunen in 1997. 
He proved in \cite{MR1469106}
that for each \(p>1\) and \(f\in W^{1,p}(\mathbb{R}^d)\) the bound
\begin{equation}
\label{eq_pg1orig}
\|\nabla\M f\|_p
\leq C_{d,p}
\|\nabla f\|_p
\end{equation}
holds for \(\M=\Mc\).
\Cref{eq_pg1orig} also holds for \(\M=\Mu\).
This implies that both Hardy-Littlewood maximal operators are bounded on Sobolev spaces with \(p>1\).
His proof does not apply for \(p=1\).
Note that unless \(f=0\) also
\(\|\M f\|_1\leq C_{d,1}\|f\|_1\)
fails since \(\M f\) is not in \(L^1(\mathbb{R}^d)\).
In \cite{MR2041705} Haj\l{}asz and Onninen asked whether \cref{eq_pg1orig} also holds for \(p=1\) for the centered Hardy-Littlewood maximal operator.
This question has become a well known problem for various maximal operators 
and there has been lots of research on this topic.
So far it has mostly remained unanswered, but there has been some progress.
For the uncentered maximal function and \(d=1\) it has been proved in \cite{MR1898539} by Tanaka 
and later in \cite{MR3310075} by Kurka for the centered Hardy-Littlewood maximal function. 
The proof for the centered maximal function turned out to be much more complicated. 
Aldaz and P\'erez L\'azaro obtained in \cite{MR2276629} the sharp improvement
\(\|\nabla\Mu f\|_{L^1(\mathbb{R})}\leq\|\nabla f\|_{L^1(\mathbb{R})}\)
of Tanaka's result.
For the uncentered Hardy-Littlewood maximal function
Haj\l{}asz's and Onninen's question already also has a positive answer for all dimensions \(d\) in several special cases.
For radial functions Luiro proved it in \cite{MR3800463},
for block decreasing functions Aldaz and P\'erez L\'azaro proved it in \cite{MR2539555} 
and for characteristic functions the author proved it in \cite{weigt2020variationcharacteristic}.
As a first step towards weak differentiability, Haj\l{}asz and Mal\'y proved in \cite{MR2550181}
that for \(f\in L^1(\mathbb{R}^d)\) the centered Hardy-Littlewood maximal function is approximately differentiable.
In \cite{MR2868961} Aldaz, Colzani and P\'erez L\'azaro proved bounds on the modulus of continuity for all dimensions.

A related question is whether the maximal operator is a continuous operator.
Luiro proved in \cite{MR2280193} that for \(p>1\) the uncentered maximal operator is continuous on \(W^{1,p}(\mathbb{R}^d)\).
There is ongoing research for the endpoint case \(p=1\).
For example Carneiro, Madrid and Pierce proved in \cite{MR3695894} that
\(f\mapsto\nabla\Mu f\) is continuous \(W^{1,1}(\mathbb{R})\rightarrow L^1(\mathbb{R})\)
and in \cite{gonzlezriquelme2020bv} Gonz\'alez-Riquelme and Kosz recently improved this to continuity on \(\BV\).
Carneiro, Gonz\'alez-Riquelme and Madrid proved in \cite{carneiro2020sunrise}
that for radial functions \(f\), the operator \(f\mapsto\nabla\Mu f\) is continuous as a map \(W^{1,1}(\mathbb{R}^d)\rightarrow L^1(\mathbb{R}^d)\).

The regularity of maximal operators has also been studied
for other maximal operators
and
on other spaces.
We focus on the endpoint \(p=1\).
In \cite{MR3063097} Carneiro and Svaiter and in \cite{carneiro2019gradient} Carneiro and Gonz\'alez-Riquelme investigated
maximal convolution operators \(\M\) associated to certain partial differential equations.
Analogous to the Hardy-Littlewood maximal operator they proved
\(\|\nabla\M f\|_{L^1(\mathbb{R}^d)}\leq C_d\|\nabla f\|_{L^1(\mathbb{R}^d)}\)
for \(d=1\), and for \(d>1\) if \(f\) is radial.
In \cite{MR3091605} Carneiro and Hughes proved
\(\|\nabla\M f\|_{l^1(\mathbb{Z}^d)}\leq C_d\|f\|_{l^1(\mathbb{Z}^d)}\)
for centered and uncentered discrete maximal operators.
This bound does not hold on \(\mathbb{R}^d\),
but because in the discrete setting we have
\(\|\nabla f\|_{l^1(\mathbb{Z}^d)}\leq C_d\|f\|_{l^1(\mathbb{Z}^d)}\),
it is weaker than the still open
\(\|\nabla\M f\|_{l^1(\mathbb{Z}^d)}\leq C_d\|\nabla f\|_{l^1(\mathbb{Z}^d)}\).
In \cite{MR2328816} Kinnunen and Tuominen proved the boundedness of a discrete maximal operator
in the metric Haj\l{}asz Sobolev space \(M^{1,1}\).
In \cite{MR3891939} Pérez, Picon, Saari and Sousa 
proved the boundedness of certain convolution maximal operators 
on Hardy-Sobolev spaces \(\dot H^{1,p}\) for a sharp range of exponents, including \(p=1\).
In \cite{weigt2020variationdyadic} the author proved 
\(
\var\Md f
\leq C_d
\var f
\)
for the dyadic maximal operator for all dimensions \(d\).

For \(0\leq\alpha\leq d\) the centered fractional Hardy-Littlewood maximal function is defined by
\[
\Mc_\alpha f(x) = \sup_{r>0}r^\alpha f_{B(x,r)}
.
\]
For a ball \(B\) we denote the radius of \(B\) by \(r(B)\).
The uncentered fractional Hardy-Littlewood maximal function is defined by
\[
\Mu_\alpha f(x) = \sup_{B\ni x}r(B)^\alpha f_B
\]
where the supremum is taken over all balls that contain \(x\).
Note that \(\M_\alpha\) does not make much sense for \(\alpha>d\).
For \(\alpha=0\) it is the Hardy-Littlewood maximal function.
The following is the fractional version of \cref{eq_pg1orig}.
\begin{theo}
\label{theo_goal}
Let \(1\leq p<\infty\) and \(0<\alpha<d/p\) and \(\M_\alpha\in\{\Mc_\alpha,\Mu_\alpha\}\).
Then for all \(f\in W^{1,p}(\mathbb{R}^d)\)
we have that \(\M_\alpha f\) is weakly differentiable with
\begin{equation}
\label{eq_generalproblem}
\|\nabla\M_\alpha f\|_{(p^{-1}-\alpha/d)^{-1}}
\leq C_{d,\alpha,p}
\|\nabla f\|_p
\end{equation}
where the constant \(C_{d,\alpha,p}\) depends only on \(d\), \(\alpha\) and \(p\).
In the endpoint \(p=1\) we can replace \(f\in W^{1,1}(\mathbb{R}^d)\) by \(f\in\BV(\mathbb{R}^d)\).
The endpoint result for \(p=d/\alpha\) holds true as well.
\end{theo}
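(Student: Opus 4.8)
Write $p^*=(p^{-1}-\alpha/d)^{-1}$ for the target exponent. The plan is to reduce the $L^{p^*}$ gradient bound to an estimate organized over the dyadic scales, and to carry out the genuinely new argument at the endpoint $p=1$ (in particular the range $0<\alpha<1$), the case $p>1$ being much softer. Throughout I may assume $f\ge 0$, since $\M_\alpha f=\M_\alpha|f|$ and $f\mapsto|f|$ increases neither $\|\nabla f\|_p$ nor the $\BV$-norm; for $p=1$ one works with $f\in\BV(\mathbb R^d)$, replacing $\int|\nabla f|$ by the total variation measure $|Df|$ throughout. Once the pointwise bound below is available, a difference-quotient argument in the spirit of Kinnunen \cite{MR1469106} shows that $\M_\alpha f$ is weakly differentiable and that the global bound $\nabla\M_\alpha f\in L^{p^*}$ follows from \cref{eq_generalproblem} itself, so it suffices to control $\nabla\M_\alpha f$ pointwise almost everywhere.

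First I would establish the pointwise estimate. At almost every $x$ the supremum defining $\M_\alpha f(x)$ is attained, up to a fixed constant factor, by a ball $B_x=B(y_x,r_x)\ni x$, and comparing the competitors $B(y_x,r_x+|x-z|)$ (for $\Mu_\alpha$) resp.\ $B(z,r_x)$ (for $\Mc_\alpha$) for $z$ near $x$, using $\bigl|\nabla_z f_{B(z,r)}\bigr|\le\frac{1}{\lm{B(z,r)}}\int_{B(z,r)}|\nabla f|$ together with $\alpha<d$, gives
\[
|\nabla\M_\alpha f(x)|\ \le\ C_d\,r_x^{\alpha-1}f_{B_x}\ =\ C_d\,\frac{\M_\alpha f(x)}{r_x}\,.
\]
The gain from $\alpha>0$ comes from a companion lower bound on the scale: the competitor $2B_x\ni x$ satisfies $(2r_x)^\alpha f_{2B_x}\le r_x^\alpha f_{B_x}=\M_\alpha f(x)$, hence $f_{B_x}-f_{2B_x}\ge(1-2^{-\alpha})f_{B_x}$, and the Poincaré inequality on $2B_x$ turns this into
\[
(1-2^{-\alpha})\,\frac{\M_\alpha f(x)}{r_x^{\alpha}}\ \le\ f_{B_x}-f_{2B_x}\ \le\ C_d\,r_x^{1-d}\!\int_{2B_x}|\nabla f|\,,
\qquad\text{so}\qquad
\int_{2B_x}|\nabla f|\ \ge\ c_{d,\alpha}\,\M_\alpha f(x)\,r_x^{\,d-1-\alpha}\,.
\]
Thus every maximizing ball carries a definite share of the mass of $\nabla f$, and its radius cannot be arbitrarily small --- this is the statement that $\M_\alpha$ \emph{does not use certain small balls}. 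For $\alpha=0$ this inequality is vacuous, which is exactly where the proof collapses.

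Now, by the pointwise estimate and the layer cake and coarea formulas applied to $u=\M_\alpha f$, it suffices (for $p=1$) to bound $\int_0^\infty t^{p^*-1}\int_{\{u=t\}}r_x^{-(p^*-1)}\,\intd\H^{d-1}(x)\,\intd t$, where on $\{u=t\}$ the maximizing ball $B_x$ satisfies $r_x^\alpha f_{B_x}=t$. Reducing from balls to dyadic cubes --- replacing $\Mu_\alpha$ by a maximum over finitely many shifted dyadic fractional maximal functions --- makes $\{u>t\}$, up to such a comparison, a union of maximal dyadic cubes $Q_{t,i}$ with $\{u=t\}\subseteq\bigcup_i\partial Q_{t,i}$, $r_x\asymp\ell(Q_{t,i})$ on $\partial Q_{t,i}$, and $\H^{d-1}(\partial Q_{t,i})=2d\,\ell(Q_{t,i})^{d-1}$; the scale--gradient balance above applied to $Q_{t,i}$ and its dyadic parent $\widehat Q_{t,i}$ gives $\int_{\widehat Q_{t,i}}|Df|\gtrsim t\,\ell(Q_{t,i})^{d-1-\alpha}$. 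Since $d-1-(p^*-1)=p^*(d-1-\alpha)$ exactly (because $p^*=(1-\alpha/d)^{-1}$), the whole problem reduces to the single estimate
\[
\int_0^\infty t^{p^*-1}\sum_i\ell(Q_{t,i})^{\,d-1-(p^*-1)}\,\intd t\ \lesssim\ (\var f)^{p^*}\,,
\]
which is the crux. The difficulty is that the parents $\widehat Q_{t,i}$ arising at different scales and levels may overlap heavily, so the naive sum diverges; here I would use the relative isoperimetric inequality inside each parent to charge the perimeter $\ell(Q_{t,i})^{d-1}$ to an essentially disjoint portion of $|Df|$, and appeal to the dyadic variation bound $\var\Md g\le C_d\var g$ of \cite{weigt2020variationdyadic} to organize the summation over the levels $t$; the power $t^{\alpha}$ hidden in the fractional scaling is precisely what makes the $t$-integral converge, and its disappearance at $\alpha=0$ is the other face of the collapse.

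Finally, for $1<p<d/\alpha$ everything is softer: the pointwise estimate already gives $|\nabla\M_\alpha f|\le\M_\alpha(|\nabla f|)$, and since $\M_\alpha g$ is dominated pointwise by the Riesz potential $I_\alpha g(x)=\int_{\mathbb R^d}|x-y|^{\alpha-d}|g(y)|\,\intd y$, which maps $L^p$ to $L^{p^*}$ for this range by the Hardy--Littlewood--Sobolev inequality, \cref{eq_generalproblem} follows at once; letting $\alpha\to 0$ recovers \cref{eq_pg1orig}. For the endpoint $p=d/\alpha$ one uses instead $\M_\alpha\colon L^{d/\alpha}\to L^\infty$, which is immediate from Hölder's inequality inside the defining averages, together with the same pointwise bound. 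I expect the main obstacle to be the geometric and combinatorial heart of the $p=1$ argument: arranging the dyadic reduction so that a Vitali selection makes the parents that are actually charged have bounded overlap, and verifying that the exponents --- forced by the scale invariance of \cref{eq_generalproblem} --- conspire so that the summation over scales and levels converges exactly because $\alpha>0$.
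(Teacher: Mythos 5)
Your skeleton matches the paper's: the pointwise bound \(|\nabla\M_\alpha f(x)|\lesssim r_x^{\alpha-1}f_{B_x}\), the observation that \(f_{2B_x}\le 2^{-\alpha}f_{B_x}\) is the quantitative use of \(\alpha>0\), the layer-cake/coarea reduction, the eventual passage to dyadic cubes, and the relative isoperimetric inequality as the engine all appear there (your coarea variant with \(\int_{\{u=t\}}r_x^{-(p^*-1)}\intd\mathcal H^{d-1}\) is exactly the alternative route sketched in the paper's remark following the proof outline), and your treatment of \(p>1\) and of \(p=d/\alpha\) via \(|\nabla\M_\alpha f|\le\M_\alpha|\nabla f|\) is the known Kinnunen--Saksman argument. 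However, your ball-to-cube reduction would fail as stated: dominating \(\Mu_\alpha\) pointwise by finitely many shifted dyadic fractional maximal operators gives no information about the level sets or the gradient of \(\Mu_\alpha f\) itself, so \(\{\M_\alpha f=t\}\) is not contained in \(\bigcup_i\partial Q_{t,i}\) --- it is contained in a union of sphere boundaries \(\partial B\), \(B\in\B_\alpha\), and the dyadic fractional maximal function is not even a Sobolev function, so nothing transfers through the pointwise domination. The paper instead stays with the balls: a Vitali-type greedy selection directly on \(\B_\alpha\) (Lemma 4.1), which uses \(\alpha>0\) to show that any two optimal balls either have disjoint dilates, comparable radii, or averages separated by a fixed factor, reduces everything to the discrete sum \(\sum_B(r(B)^{d/p-1}f_B)^p\) over the selected balls, and only this sum is then transferred to comparable dyadic cubes.

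The second gap is that the estimate you correctly isolate as the crux, \(\sum_Q(\sle(Q)^{d/p-1}f_Q)^p\lesssim\|\nabla f\|_p^p\) for the selected cubes, is the paper's Theorem 1.4 and does not follow from \(\var\Md g\le C_d\var g\); the input actually needed from the dyadic paper is the quantitative cube-sum bound (Proposition 3.4 here), which controls \(\sum_Q(\sle(Q)^{d/p-1}(f_Q-\lambda_Q^{\Q}))^p\) with a subtracted median-type level \(\lambda_Q^{\Q}\). Removing that correction is where \(\alpha>0\) enters a second time: the paper splits the selected cubes according to whether \(\{f>2^{-\varepsilon/3}f_Q\}\) fills a small or a large fraction of \(Q\), handles the first class by Proposition 3.4 and the second by Chebyshev plus the relative isoperimetric inequality in a controlled ancestor \(p(Q)\), using that the value ranges \((2^{-2\varepsilon/3}f_Q,2^{-\varepsilon/3}f_Q)\) are pairwise disjoint for cubes with intersecting ancestors. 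You name the right tools, but this selection-and-splitting argument is the actual content of the theorem, and your proposal explicitly defers it rather than supplying it.
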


We prove \cref{theo_goal} in \cref{subsec_setuphl}.
The study of the regularity of the fractional maximal operator was initiated by Kinnunen and Saksman.
They proved in \cite[Theorem~2.1]{MR1979008} that \cref{eq_generalproblem} holds for \(0\leq\alpha<d/p\) and \(1<p<\infty\).
They showed
\(
|\nabla\Mc_\alpha f(x)|
\leq
\M_\alpha|\nabla f|(x)
\)
for almost every \(x\in\mathbb{R}^d\),
and then concluded \cref{eq_generalproblem} from the \(L^{(p^{-1}-\alpha/d)^{-1}}\)-boundedness of \(\M_\alpha\),
which fails for \(p=1\).
Another result by Kinnunen and Saksman in \cite{MR1979008}
is that for all \(\alpha\geq1\) we have
\(
|\nabla\Mc_\alpha f(x)|
\leq
(d-\alpha)
\M_{\alpha-1}f(x)
\)
for almost every \(x\in\mathbb{R}^d\).
In \cite{MR3624402} Carneiro and Madrid used this,
the \(L^{d/(d-\alpha)}\)-boundedness of \(\M_{\alpha-1}\),
and Sobolev embedding
to concluded \cref{eq_generalproblem}.
All of this also works for the uncentered fractional maximal function \(\Mu_\alpha\).
The strategy fails for \(\alpha<1\).

Our main result is the extension of \cref{eq_generalproblem} to the endpoint \(p=1\) for \(0<\alpha<1\) which has been an open problem.
Our proof of \cref{theo_goal} also works for \(1\leq\alpha\leq d\),
and further extends to
\(1\leq p<\infty\), \(0<\alpha\leq d/p\).
We present the proof for this range of parameters here,
since it also smoothens out the blowup of the constants for \(p\rightarrow1\) which occurs in the previous proof for \(p>1\).
Note that interpolation is not immediately available for results on the gradient level.
Our approach fails for \(\alpha=0\).
The corner point \(\alpha=0,\ p=1\) is the earlier mentioned question by Haj\l{}asz and Onninen and remains open.
Similarly to Carneiro and Madrid, we begin the proof with a pointwise estimate
\(
|\nabla\M_\alpha f(x)|
\leq
(d-\alpha)
\M_{\alpha,-1}f(x)
\)
which holds for all \(0<\alpha<d\)
for bounded functions.
We estimate \(\M_{\alpha,-1}f\) in \cref{theo_mfdr} and from that conclude \cref{theo_goal}.

For the centered fractional maximal function define
\[
\Bc_\alpha(x)
=
\{B(x,r)\}
\]
where \(r\) is the largest radius such that
\(
\Mc_\alpha f(x)=r^\alpha f_{B(x,r)}
\)
and for the uncentered fractional maximal function define
\[
\Bu_\alpha(x)
=
\bigl\{B:
x\in\cl B
,\ 
r(B)^\alpha f_B
=
\Mu_\alpha f(x)
,\ 
\forall A\supsetneq B
\ 
r(A)^\alpha f_A < \Mu_\alpha f(x) 
\bigr\}
.
\]
Then for almost every \(x\in\mathbb{R}^d\) the sets \(\Bc_\alpha(x)\) and \(\Bu_\alpha(x)\) are nonempty,
i.e.\ the supremum in the definition of the maximal function is attained
in a largest ball \(B\) with \(x\in\cl B\), see \cref{lem_optimalball}.
For \(\B_\alpha\in\{\Bc_\alpha,\Bu_\alpha\}\) denote
\(\B_\alpha=\bigcup_{x\in\mathbb{R}^d}\B_\alpha(x)\).
For \(\beta\in\mathbb{R}\) with \(-1\leq\alpha+\beta<d\)
this allows us to define the following maximal functions
\begin{align*}
\Mc_{\alpha,\beta} f(x) &= \sup_{B\in\Bc_\alpha:x\in\cl B}r(B)^{\alpha+\beta} f_B,\\
\Mu_{\alpha,\beta} f(x) &= \sup_{B\in\Bu_\alpha:x\in\cl B}r(B)^{\alpha+\beta} f_B
\end{align*}
for almost every \(x\in\mathbb{R}^d\).
Note that also for the centered version the supremum is all balls \(B\in\Bc_\alpha\) whose closure contains \(x\),
not only over those centered in \(x\).
\begin{theo}
\label{theo_mfdr}
Let \(1\leq p<\infty\) and \(0<\alpha<d\) and \(\beta\in\mathbb{R}\) with \(0\leq\alpha+\beta+1<d/p\)
and \(\M_{\alpha,\beta}\in\{\Mc_{\alpha,\beta},\Mu_{\alpha,\beta}\}\).
Then for all \(f\in W^{1,p}(\mathbb{R}^d)\)
we have
\[
\|\M_{\alpha,\beta} f\|_{(p^{-1}-(1+\alpha+\beta)/d)^{-1}}
\leq C_{d,\alpha,\beta,p}
\|\nabla f\|_p
\]
where the constant \(C_{d,\alpha,\beta,p}\) depends only on \(d\), \(\alpha\), \(\beta\) and \(p\).
In the endpoint \(p=1\) we can replace \(f\in W^{1,1}(\mathbb{R}^d)\) by \(f\in\BV(\mathbb{R}^d)\).
The endpoint result for \(p=d/(1+\alpha+\beta)\) holds true as well.
\end{theo}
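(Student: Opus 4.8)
The plan is to remove routine reductions, replace balls by dyadic cubes, run the estimate cube by cube using a relative isoperimetric inequality, and control the resulting sum over scales by feeding it into the \(\alpha=0\) dyadic variation bound. Replacing \(f\) by \(|f|\) (which increases neither \(\var f\) nor \(\|\nabla f\|_p\)) I may assume \(f\ge0\), and by truncating \(f\) from above and invoking Fatou's lemma I may further assume \(f\) bounded with compact support. Then by \cref{lem_optimalball} all admissible balls exist, and --- crucially --- since \(\alpha>0\) their radii are bounded below (and above) in terms of \(\|f\|_\infty\), \(\supp f\) and the value of the maximal function; this is the first appearance of the principle that for \(\alpha>0\) the fractional maximal function ignores very small balls. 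I describe the argument for \(\Mu_{\alpha,\beta}\); the centered operator \(\Mc_{\alpha,\beta}\) is handled in the same way, the relevant facts about optimal balls holding in both settings. Finally, for \(p=1\) the layer cake formula \(f=\int_0^\infty\ind{\{f>t\}}\intd t\) inserted into each average \(f_B\), together with Minkowski's integral inequality, reduces the theorem to the case \(f=\ind E\) of a set \(E\) of finite perimeter, i.e.\ to \(\|\M_{\alpha,\beta}\ind E\|_{(1-\gamma/d)^{-1}}\le C\,\sm{\rb E}\) with \(\gamma\seq1+\alpha+\beta\); the coarea formula \(\var f=\int_0^\infty\sm{\rb{\{f>t\}}}\intd t\) then reassembles the general \(\BV\) statement. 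For \(1<p\le d/\gamma\) I would argue directly, which also removes the blow-up of the constant as \(p\to1\) in the earlier proofs.

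Next I pass from balls to dyadic cubes: using a fixed finite family of shifted dyadic lattices, each admissible ball \(B\) lies in a dyadic cube \(Q\supseteq B\) with \(\ell(Q)\le C_d\,r(B)\), so that \(\M_{\alpha,\beta}\ind E\) is bounded by a finite sum of dyadic fractional maximal functions \(\Md_{\alpha,\beta}\ind E\), the supremum running over dyadic cubes \(Q\ni x\) that are optimal for \(\Md_\alpha\ind E\); the no-small-ball property is exactly what makes this comparison lossless, and is exactly what fails at \(\alpha=0\), where the proof then collapses. By the layer cake formula, \(\|\Md_{\alpha,\beta}\ind E\|_q^q=q\int_0^\infty\lambda^{q-1}\,|\{\Md_{\alpha,\beta}\ind E>\lambda\}|\intd\lambda\) with \(q=(1-\gamma/d)^{-1}\). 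For fixed \(\lambda\) the superlevel set is a union of optimal dyadic cubes \(Q\) with \(\ell(Q)^{\alpha+\beta}(\ind E)_Q>\lambda\), and keeping the ones maximal under inclusion --- the dyadic form of a Vitali selection --- yields a pairwise disjoint family \(\Q_\lambda\) with \(|\{\Md_{\alpha,\beta}\ind E>\lambda\}|=\sum_{Q\in\Q_\lambda}\ell(Q)^d\).

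For \(Q\in\Q_\lambda\), optimality of \(Q\) for \(\Md_\alpha\) forces the density of \(E\) in the dyadic parent \(\widehat Q\) to drop by a definite factor below its density in \(Q\); the relative isoperimetric inequality in \(\widehat Q\) therefore gives \((\ind E)_Q\,\ell(Q)^{d-1}\le C_{d,\alpha}\,\sm{\rb E\cap\widehat Q}\), and together with \((\ind E)_Q>\lambda\,\ell(Q)^{1-\gamma}\) this yields \(\ell(Q)^{d-\gamma}\le C_{d,\alpha}\,\lambda^{-1}\sm{\rb E\cap\widehat Q}\), hence \(\ell(Q)^d\le C\,\lambda^{-q}\,\sm{\rb E\cap\widehat Q}^q\). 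Summing over the disjoint cubes of a single scale is controlled by \(C\lambda^{-q}\sm{\rb E}^q\), but this bound is uniform in the scale, so summing over all scales and then integrating in \(\lambda\) needs genuine care; removing precisely this loss is the purpose of the author's dyadic bound \(\var\Md g\le C_d\var g\), and I would organize the scalewise contributions as superlevel sets of the dyadic maximal function of a suitable auxiliary function built from the numbers \(\sm{\rb E\cap\widehat Q}\) and invoke that bound to perform the summation without loss, obtaining \(\|\Md_{\alpha,\beta}\ind E\|_q\le C\,\sm{\rb E}\) and thereby \cref{theo_mfdr} for \(p=1\). The cases \(1<p\le d/\gamma\) run the same way with \(\|\nabla f\|_p\) in place of the coarea identity, and the endpoint \(q=\infty\) is the degenerate version of the same estimate.

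The hard part will be this last step: interlocking the local bounds \(\ell(Q)^d\lesssim\lambda^{-q}\sm{\rb E\cap\widehat Q}^q\) with the global dyadic variation inequality so that the perimeter of \(E\) is charged only boundedly often while the full measure of every superlevel set is still recovered, and at the same time tracking the scaling so that the final exponent comes out exactly \((p^{-1}-(1+\alpha+\beta)/d)^{-1}\) and not something weaker. Everything rests on the fact that for \(\alpha>0\) admissible balls cannot be arbitrarily small; at \(\alpha=0\) this fails, the sum over scales genuinely diverges, and the scheme collapses --- which is why the corner point \(\alpha=0\), \(p=1\) is left open.
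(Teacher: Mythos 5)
Your outline reproduces the strategy announced in the paper's introduction (layer cake, Vitali selection, transfer to dyadic cubes, relative isoperimetric inequality, the \(\alpha=0\) dyadic result), but two of your steps do not survive scrutiny. First, the reduction to characteristic functions is invalid: \(\M_{\alpha,\beta}\) is a supremum over the family \(\B_\alpha\) of balls that are \emph{optimal for \(f\)}, so inserting \(f_B=\int_0^\infty(\ind{\{f>t\}})_B\intd t\) only yields
\[
\M_{\alpha,\beta}f(x)\le\int_0^\infty\sup\bigl\{r(B)^{\alpha+\beta}(\ind{\{f>t\}})_B:B\in\B_\alpha,\ x\in\cl B\bigr\}\intd t,
\]
and the inner supremum still ranges over balls optimal for \(f\), not over balls optimal for \(\ind{\{f>t\}}\); there is no inclusion between these families, so the right-hand side is not \(\int_0^\infty\M_{\alpha,\beta}\ind{\{f>t\}}(x)\intd t\). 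The operator is not sublinear in the sense your Minkowski step requires, and the paper accordingly never reduces to characteristic functions: it works with general \(f\) throughout, which is exactly why \cref{lem_disjointcubes} needs its two-case split into \(\Q_-\) (low superlevel density, handled by the \(\alpha=0\) result, \cref{pro_densitylow}) and \(\Q_+\) (high density, handled by the isoperimetric inequality and coarea formula). Your isoperimetric step, as written, only makes sense for \(f=\ind E\). For \(1<p\) you offer no argument at all beyond ``argue directly.''

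Second, and more importantly, the step you yourself flag as ``the hard part'' is precisely the content of the proof, and your sketch of it is not workable. A per-\(\lambda\) selection of maximal disjoint cubes gives families \(\Q_\lambda\) that vary with \(\lambda\), and summing the resulting bounds over scales and integrating in \(\lambda\) charges \(\sm{\rb E}\) unboundedly often; no auxiliary function fed into \(\var\Md g\le C_d\var g\) is exhibited that repairs this. The paper's mechanism is different: \cref{lem_disjointballs} performs a \(\lambda\)-\emph{independent} greedy selection \(\widetilde\B\) with the dichotomy that any two selected balls either have disjoint \(c_1\)-dilates or averages outside \((c_2^{-1},c_2)\) of each other (this is where \(\alpha>0\) enters, forcing comparable radii for intersecting balls with comparable averages), and covers the whole superlevel set at every \(\lambda\) by dilates of selected balls, reducing everything to the single sum \(\sum_{B\in\widetilde\B}(r(B)^{d/p-1}f_B)^p\). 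After transferring to cubes via \cref{lem_ballsimcube,lem_lowerancestor}, \cref{lem_disjointcubes} exploits the dichotomy: for cubes with intersecting parents the averages lie in disjoint dyadic ranges, so the coarea integrals over \(\{x\in p(Q):f(x)\in(2^{-2\varepsilon/3}f_Q,2^{-\varepsilon/3}f_Q)\}\) are pairwise disjoint and sum to at most \(\int|\nabla f|^p\). Without this ``disjoint dilates or disjoint average ranges'' structure the summation over scales cannot be closed, so the proposal has a genuine gap at its central step.
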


We prove \cref{theo_mfdr} in \cref{sec_uncentered}.
There had also been progress on \(0<\alpha\leq 1\) similarly as for the Hardy-Littlewood maximal operator.
For the uncentered fractional maximal function
Carneiro and Madrid proved \cref{theo_goal} for \(d=1\) in \cite{MR3624402},
and Luiro proved \cref{theo_goal} for radial functions in \cite{MR4001028}.
Beltran and Madrid transfered Luiros result to the centered fractional maximal function in \cite{beltran2019regularity}.
In \cite{MR3912794} Beltran, Ramos and Saari proved \cref{theo_goal} for \(d\geq2\) and a centered maximal operator that only uses balls with lacunary radius and for maximal operators with respect to smooth kernels.
The next step after boundedness is continuity of the gradient of the fractional maximal operator,
as it implies boundedness, but doesn't follow from it.
In \cite{beltran2019endpoint,MR4029798} Beltran and Madrid already proved it for the uncentered fractional maximal operator in the cases where the boundedness is known.

For a dyadic cube \(Q\) we denote by \(\sle(Q)\) the sidelength of \(Q\).
The fractional dyadic maximal function is defined by
\[
\Md_\alpha f(x)
=
\sup_{Q:Q\ni x}
\sle(Q)^\alpha f_Q
,
\]
where the supremum is taken over all dyadic cubes that contain \(x\).
The dyadic maximal operator has enjoyed a bit less attention than its continuous counterparts,
such as the centered and the uncentered Hardy-Littlewood maximal operator.
The dyadic maximal operator is different in the sense that
\cref{eq_generalproblem} only holds for \(\alpha=0\), \(p=1\) and only in the variation sense,
for which \cref{eq_generalproblem} has been proved in \cite{weigt2020variationdyadic}.
But for any other \(\alpha\) and \(p\) \cref{eq_generalproblem} fails
because \(\nabla\Md_\alpha f\) is not a Sobolev function.
We can however prove \cref{theo_mfdrdyadic}, the dyadic analog of \cref{theo_mfdr}.
For \(\alpha\geq0\) and a function \(f\in L^1(\mathbb{R}^d)\) define
\(\Q_\alpha\) to be the set of all cubes \(Q\)
such that for all dyadic cubes \(P\supsetneq Q\) we have
\(\sle(P)^\alpha f_P<\sle(Q)^\alpha f_Q\).
\begin{rem}
In the uncentered setting
one could also define \(\B_\alpha\) in a similar way as \(\Q_\alpha\).
\end{rem}
For \(\beta\in\mathbb{R}\) with \(-1\leq\alpha+\beta<d\) also define in the dyadic setting
\[
\Md_{\alpha,\beta} f(x) = \sup_{Q\in\Q_\alpha:x\in\cl Q}\sle(Q)^{\alpha+\beta}f_Q
.
\]
Then
\begin{theo}
\label{theo_mfdrdyadic}
Let \(1\leq p<\infty\) and \(0<\alpha<d\) and \(\beta\in\mathbb{R}\) with \(0\leq\alpha+\beta+1<d/p\).
Then for all \(f\in W^{1,p}(\mathbb{R}^d)\)
we have
\[
\|\Md_{\alpha,\beta} f\|_{(p^{-1}-(1+\alpha+\beta)/d)^{-1}}
\leq C_{d,\alpha,\beta,p}
\|\nabla f\|_p
\]
where the constant \(C_{d,\alpha,\beta,p}\) depends only on \(d\), \(\alpha\), \(\beta\) and \(p\).
In the endpoint \(p=1\) we can replace \(f\in W^{1,1}(\mathbb{R}^d)\) by \(f\in\BV(\mathbb{R}^d)\).
The endpoint result for \(p=d/(1+\alpha+\beta)\) holds true as well.
\end{theo}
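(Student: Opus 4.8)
The plan is to estimate the distribution function of \(\Md_{\alpha,\beta}f\) and to reduce everything to the already known endpoint bound \(\var\Md h\le C_d\var h\) of \cite{weigt2020variationdyadic} for the plain dyadic maximal operator, using crucially that \(\alpha>0\) prevents the competing cubes from being too small. By truncation and mollification (and, for the \(\BV\) endpoint, strict approximation together with lower semicontinuity of \(f\mapsto\Md_{\alpha,\beta}f\)) I would first reduce to \(f\) nonnegative, bounded, Lipschitz and compactly supported; for such \(f\) the cubes realizing the relevant suprema have sidelength bounded below — the phenomenon that \(\M_\alpha\) with \(\alpha>0\) ignores small cubes — which makes the covering arguments below locally finite. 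With \(q=(p^{-1}-(1+\alpha+\beta)/d)^{-1}\), the layer cake formula gives \(\|\Md_{\alpha,\beta}f\|_q^q=q\int_0^\infty\lambda^{q-1}\lmb{\{\Md_{\alpha,\beta}f>\lambda\}}\intd\lambda\). The elementary engine is that if \(Q\in\Q_\alpha\), then \(Q\) strictly beats its dyadic parent \(\hat Q\) in the \(\alpha\)-weighted average, so \(f_{\hat Q}<2^{-\alpha}f_Q\) and hence \(|f|\) has mean oscillation at least a fixed multiple of \(f_Q\) on \(\hat Q\); the \(L^1\) Poincaré inequality and Hölder then give \(f_Q\le C_{d,\alpha}\sle(Q)^{1-d/p}(\int_{\hat Q}|\nabla f|^p)^{1/p}\), and combining this with \(\sle(Q)^{\alpha+\beta}f_Q>\lambda\) and the identity \(1/q=1/p-(1+\alpha+\beta)/d\) yields \((\int_{\hat Q}|\nabla f|^p)^{1/p}\ge c_{d,\alpha}\lambda\sle(Q)^{d/q}\).

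Together with a Vitali type selection on balls circumscribing the parents of the maximal competing cubes — which produces a disjoint subfamily of parents whose dilates still cover \(\{\Md_{\alpha,\beta}f>\lambda\}\) — this gives the weak type bound \(\lmb{\{\Md_{\alpha,\beta}f>\lambda\}}\le C\lambda^{-q}\|\nabla f\|_p^q\); but the resulting integrand behaves like \(\lambda^{-1}\) for large \(\lambda\) and is not summable, so the levels must be coupled. I would therefore not estimate level by level but expand the integral directly. At almost every \(x\) the supremum defining \(\Md_{\alpha,\beta}f(x)\) is attained at some \(Q(x)\in\Q_\alpha\) with \(x\in\cl{Q(x)}\) (for nice \(f\) only finitely many \(\Q_\alpha\)-cubes through \(x\) have value above a given threshold), so grouping by \(Q(x)\) gives \(\int(\Md_{\alpha,\beta}f)^q=\sum_Q\lmb{\{Q(x)=Q\}}(\sle(Q)^{\alpha+\beta}f_Q)^q\). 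Inserting the Poincaré estimate, the powers of \(\sle(Q)\) cancel against \(\lmb{\{Q(x)=Q\}}\le\sle(Q)^d\), leaving \(\int(\Md_{\alpha,\beta}f)^q\le C_{d,\alpha}\sum_Q(\int_{\hat Q}|\nabla f|^p)^{q/p}\) with the sum over those \(Q\) that are optimal for some \(x\). Since \(q\ge p\) (because \(1+\alpha+\beta\ge0\)), it suffices to bound \(\sum_Q\int_{\hat Q}|\nabla f|^p\), i.e.\ to show that the parents of the optimal cubes have bounded overlap after a Vitali pruning.

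This last point is the main obstacle. Within one sidelength it is immediate, since distinct dyadic cubes are disjoint; the whole difficulty is that near a given point only boundedly many scales may host an optimal \(\Q_\alpha\)-cube — the one step that genuinely collapses at \(\alpha=0\). Here I would invoke \(\var\Md h\le C_d\var h\): a chain of nested optimal cubes is, after stripping the scale weights, exactly the configuration that the \(\alpha=0\) dyadic bound controls, while the decay \(\sle(Q)^\alpha\to0\) forced by \(\alpha>0\) (together with \(\|f\|_\infty<\infty\)) truncates such chains so that the resulting sum over scales converges. In the \(p=1\) and \(\BV\) endpoint I expect the coarea formula and a relative isoperimetric inequality to be needed in order to rewrite the measures \(\lmb{\{\Md_{\alpha,\beta}f>\lambda\}}\) as perimeters of the super-level sets, the form in which the \(\alpha=0\) result is available. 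Assembling these estimates proves the inequality for the reduced class of \(f\), and the approximation of the first step extends it to all of \(W^{1,p}\) and \(\BV\); the endpoint \(p=d/(1+\alpha+\beta)\), where \(q=\infty\), follows by reading the same cube estimate as an \(L^\infty\) bound.
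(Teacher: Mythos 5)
Your reduction of the theorem to a sum over the optimal cubes is sound and is essentially the paper's own first step: the paper bounds the supremum pointwise by the sum over all of \(\Q_\alpha\), uses \(\lm Q=\sle(Q)^d\) to turn \((\sle(Q)^{\alpha+\beta}f_Q)^q\lm Q\) into \((\sle(Q)^{d/p-1}f_Q)^q\), and applies \(\ell^p\hookrightarrow\ell^q\), thereby reducing everything to \cref{eq_finitedyadiclinear}, i.e.\ to \(\sum_{Q\in\Q_\alpha}(\sle(Q)^{d/p-1}f_Q)^p\lesssim\|\nabla f\|_p^p\). The gap is in how you bound that sum. After inserting the Poincar\'e inequality on the parent you are left with \(\sum_Q G_Q^{q/p}\) with \(G_Q=\int_{\prt(Q)}|\nabla f|^p\), and you reduce via \(q\ge p\) to bounded overlap of the parents of the optimal cubes. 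That overlap is \emph{not} bounded, and the step \(\sum_QG_Q^{q/p}\le(\sum_QG_Q)^{q/p}\) discards exactly the decay that makes the true statement work. Take \(f\) a smoothed indicator of a small ball \(B\): for every dyadic cube \(Q\supset B\) one has \(\sle(Q)^\alpha f_Q\sim\sle(Q)^{\alpha-d}\), strictly decreasing in \(\sle(Q)\), so \emph{all} sufficiently large dyadic cubes containing \(B\) lie in \(\Q_\alpha\), and for \(x\) at distance \(\sim2^k\) from \(B\) the optimal cube has sidelength \(\sim2^k\). Hence infinitely many nested cubes are optimal, each of their parents contains \(\supp\nabla f\), and \(\sum_QG_Q=\infty\) (indeed \(\sum_QG_Q^{q/p}=\infty\)). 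The chains are not truncated by \(\alpha>0\); what decays geometrically along a nested chain in \(\Q_\alpha\) is \(f_Q\) (by \(2^{-\alpha}\) per generation), and the Poincar\'e bound \(f_Q\lesssim\sle(Q)^{1-d/p}G_Q^{1/p}\) is lossy by an unbounded factor along such a chain, so it cannot see this decay.

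The missing idea is to exploit disjointness in the \emph{range} of \(f\) rather than in space. The paper first runs a greedy selection (\cref{lem_disjointrepresentative}) producing a subfamily \(\tilde\Q_\alpha\) in which any two cubes have equal parents, disjoint parents, or averages separated by a factor \(2^d\); passing from \(\Q_\alpha\) to \(\tilde\Q_\alpha\) costs only a constant because, by \(\alpha>0\), nested \(\Q_\alpha\)-cubes with comparable averages have comparable sidelengths. For the selected cubes one integrates \(|\nabla f|^p\) over \(\{x\in\prt(Q):f(x)\in(2^{-2\varepsilon/3}f_Q,2^{-\varepsilon/3}f_Q)\}\); for spatially overlapping cubes these sets are pairwise disjoint precisely because the averages are geometrically separated, and the coarea formula together with the relative isoperimetric inequality (\cref{cor_coarearange,lem_isoperimetric}, used for every \(p\), not only the \(\BV\) endpoint) shows each such integral dominates \((\sle(Q)^{d/p-1}f_Q)^p\) when the superlevel set fills a definite fraction of \(Q\). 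The remaining ``peaked'' cubes are handled by the \(\alpha=0\) dyadic result (\cref{pro_densitylow}), which controls \(f_Q-\lambda_Q^\Q\) rather than \(f_Q\). Your appeal to \(\var\Md h\le C_d\var h\) ``after stripping the scale weights,'' and the claim that \(\alpha>0\) truncates the nested chains, would have to be replaced by an argument of exactly this kind; as stated they do not close the proof.
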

Our main result in the dyadic setting is the following.
\begin{theo}
\label{eq_finitedyadiclinear}
Let \(1\leq p<\infty\) and \(0<\alpha<d\).
Then for all \(f\in W^{1,p}(\mathbb{R}^d)\)
we have
\[
\Biggl(
\sum_{Q\in\Q_\alpha}
(\sle(Q)^{\f dp-1}f_Q)^p
\Biggr)^{\f1p}
\leq C_{d,\alpha,p}
\|\nabla f\|_p
\]
where the constant \(C_{d,\alpha,p}\) depends only on \(d\), \(\alpha\) and \(p\).
In the endpoint \(p=1\) we can replace \(f\in W^{1,1}(\mathbb{R}^d)\) by \(f\in\BV(\mathbb{R}^d)\).
The endpoint result for \(p=\infty\) holds true as well.
\end{theo}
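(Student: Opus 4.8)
The plan is to reduce the claimed $\ell^p$-summation bound over the family $\Q_\alpha$ to the already established dyadic result for $\alpha=0$ from \cite{weigt2020variationdyadic}, exploiting the single structural feature that distinguishes the fractional problem: when $\alpha>0$, a cube $Q\in\Q_\alpha$ is selected only if no \emph{larger} dyadic cube $P\supsetneq Q$ does better, and the weight $\sle(P)^\alpha$ heavily favors large cubes. Concretely, if $Q\in\Q_\alpha$ then for every dyadic ancestor $P$ of $Q$ at scale $\sle(P)=2^k\sle(Q)$ we have $\sle(P)^\alpha f_P<\sle(Q)^\alpha f_Q$, i.e.\ $f_P<2^{-k\alpha}f_Q$. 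Averaging $|f|$ over $P$ and using that $P$ contains $2^{kd}$ children-generations worth of mass, this forces a genuine \emph{local} lower bound on the oscillation of $f$ near $Q$: the average of $|f|$ cannot be nearly constant from scale $\sle(Q)$ up to any larger scale, so there must be substantial gradient mass in a controlled neighborhood of $Q$. This is the mechanism behind the phrase in the abstract that ``for $\alpha>0$ the fractional maximal function does not use certain small balls'' and that ``for $\alpha=0$ the proof collapses.''

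First I would set up the layer-cake / level-set decomposition: write the sum $\sum_{Q\in\Q_\alpha}(\sle(Q)^{d/p-1}f_Q)^p$ and, for each dyadic scale $2^j$ and each height threshold, collect the cubes $Q\in\Q_\alpha$ with $\sle(Q)=2^j$ and $f_Q$ in a dyadic band $[\lambda,2\lambda)$. For such a cube, the quantity $\sle(Q)^{d/p-1}f_Q$ is comparable to $2^{j(d/p-1)}\lambda$, and $\sle(Q)^d f_Q=\int_Q|f|$ is comparable to $2^{jd}\lambda$. The selection property from $\Q_\alpha$ says that the parent cube $\hat Q$ already fails, $f_{\hat Q}<2^{-\alpha}f_Q$, so $\int_{\hat Q\setminus Q}|f|\le \bigl(2^{d-\alpha}-1\bigr)\int_Q|f|/(2^d-1)$ in a suitable sense — more importantly, $f$ drops by a fixed multiplicative factor between $Q$ and $\hat Q$. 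Iterating this up the tree shows $f_P\to 0$ geometrically, so $\int_{P}|f|$ grows strictly slower than $\sle(P)^d$; pairing this with a Poincaré/relative isoperimetric inequality on the annulus $\hat Q\setminus Q$ (or on a fixed dilate of $Q$) produces the bound $\int_{cQ}|\nabla f|\gtrsim_{\alpha,d}\sle(Q)^{d-1}f_Q=\sle(Q)^{d/p-1}\cdot\sle(Q)^{d(1-1/p)}f_Q$. The exponent bookkeeping here is exactly where the hypothesis $0\le\alpha+1<d/p$ in the sibling \cref{theo_mfdrdyadic} is replaced by freedom in $p$: the factor $\sle(Q)$ saved by the selection property is precisely the one missing scale that makes the raw $\alpha=0$ estimate true.

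Next I would invoke the $\alpha=0$ dyadic result. After the previous step, each $Q\in\Q_\alpha$ has been charged a definite amount of gradient energy $\int_{cQ}|\nabla f|^p$ (for $p=1$, variation of $f$ in $cQ$) weighted by the right power of $\sle(Q)$; what remains is to show these charges have bounded overlap in $p$-th power sum, i.e.\ $\sum_{Q\in\Q_\alpha}\ind{cQ}$ has controlled $\ell^p$-type multiplicity against $|\nabla f|^p$. This is not literally true for all cubes in $\Q_\alpha$ simultaneously — a fixed point can lie in dilates of many cubes of the same scale — but a Vitali-type selection, exactly as used in \cite{weigt2020variationdyadic} for $\alpha=0$, extracts a boundedly overlapping subfamily carrying a constant fraction of the sum at each scale; summing the geometric-in-scale tails then closes the estimate. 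For $p>1$ I would combine this with the reduction from balls to dyadic cubes and the coarea formula as in \cref{theo_mfdr}, and for $p=\infty$ the statement degenerates to $\sup_{Q\in\Q_\alpha}\sle(Q)^{d}f_Q/\sle(Q)\lesssim\|\nabla f\|_\infty$, which follows directly from the single-cube Poincaré estimate above with no summation needed. The main obstacle I anticipate is making the overlap control uniform across \emph{all} scales at once: a point can be covered by dilates $cQ$ for cubes $Q\in\Q_\alpha$ of every sufficiently large scale, so the naive sum over scales diverges logarithmically. The resolution must again use the selection property — the geometric decay $f_P<2^{-k\alpha}f_Q$ up the tree means the \emph{contributions} $\sle(Q)^{d/p-1}f_Q$ from nested selected cubes along a ray decay geometrically in the scale gap, so the logarithm is beaten by the $\alpha$-gain, and this is the step that collapses when $\alpha=0$.
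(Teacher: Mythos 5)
Your first step is sound and matches the paper's endpoint argument: for $Q\in\Q_\alpha$ the parent satisfies $f_{\prt(Q)}<2^{-\alpha}f_Q$, and a Poincar\'e inequality on $\prt(Q)$ gives the single-cube bound $\|\nabla f\|_{L^p(\prt(Q))}\gtrsim_\alpha \sle(Q)^{d/p-1}f_Q$; this is exactly how the paper handles $p=\infty$. You also correctly identify the real difficulty as the summation over all of $\Q_\alpha$, where a point lies in cubes of every scale. But your proposed resolution of that difficulty does not work. You claim the contributions $\sle(Q)^{d/p-1}f_Q$ decay geometrically along a nested chain of selected cubes. They need not: if $Q_0\subset Q_1\subset\cdots$ are in $\Q_\alpha$ with $\sle(Q_j)=2^{k_j}\sle(Q_0)$, the selection property only gives $f_{Q_j}<2^{-k_j\alpha}f_{Q_0}$, so for $p=1$ the $j$-th contribution can be as large as $2^{k_j(d-1-\alpha)}\sle(Q_0)^{d-1}f_{Q_0}$, which \emph{grows} whenever $\alpha<d-1$ (the main case of interest, $0<\alpha<1$, $d\geq2$). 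So the ``$\alpha$-gain beats the logarithm'' mechanism is not available, and the Vitali selection ``carrying a constant fraction of the sum at each scale'' is left unspecified; as stated the argument does not close.

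The paper resolves the overlap by a different mechanism, which is absent from your sketch. First, a greedy selection (\cref{lem_disjointrepresentative}) extracts a subfamily $\tilde\Q_\alpha$ with the trichotomy: any two members have equal parents, disjoint parents, or averages separated by a fixed multiplicative factor ($f_Q/f_P\notin(2^{-d},2^d)$); the $\alpha$-selection property guarantees each representative absorbs only boundedly many cubes of $\Q_\alpha$. Then \cref{lem_disjointcubes} splits the surviving cubes into those where the level set $\{f>2^{-\varepsilon/3}f_Q\}$ fills a definite fraction of $Q$ and those where it does not. For the former, the relative isoperimetric inequality and the coarea formula charge $Q$ to $\int_{\{x\in p(Q):\,f(x)\in(2^{-2\varepsilon/3}f_Q,\,2^{-\varepsilon/3}f_Q)\}}|\nabla f|^p$, and these sets are pairwise disjoint \emph{because of the separation of the averages}: disjointness is achieved in the range of $f$, not in space, which is precisely how arbitrarily deep spatial nesting is defeated. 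For the latter (sparse) cubes the isoperimetric charge fails, and the paper must invoke the full $\alpha=0$ machinery of \cite{weigt2020variationdyadic} via \cref{pro_densitylow}, after verifying $f_Q-\lambda_Q^\Q\geq(1-2^{-\varepsilon/3})f_Q$. You mention invoking the $\alpha=0$ result but never say what it is applied to or why its hypotheses hold; in the paper it is an essential, non-interchangeable half of the argument rather than a black box appended at the end.
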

\begin{rem}
Note that in \cref{eq_finitedyadiclinear} we restrict \(0<\alpha<d\) and not \(0<\alpha<d/p\).
\end{rem}
In \cref{subsec_setupdyadic} we conclude \cref{theo_mfdrdyadic} from \cref{eq_finitedyadiclinear},
and in \cref{sec_dyadic} we prove \cref{eq_finitedyadiclinear}.
\begin{rem}
\Cref{eq_finitedyadiclinear} fails for \(\alpha=0\).
However for \(\alpha=0\) and \(p=1\),
a version with \(f_Q\) by replaced by \(f_Q-\lambda_Q\) holds for certain \(\lambda_Q\),
see \cite[Proposition~2.5]{weigt2020variationdyadic}.
\end{rem}

\begin{rem}
For centered, uncentered maximal operator and dyadic maximal operator,
\cref{theo_mfdr,theo_mfdrdyadic,eq_finitedyadiclinear} admit localized versions of the following form.
For \(D\subset\mathbb{R}^d\) we set
\(
\B_\alpha(D)
=
\bigcup_{x\in D}\B_\alpha(x)
\)
and
\(
E
=
\bigcup\{c B:B\in\B_\alpha(D)\}
\)
with some large \(c>1\).
Then \cref{theo_mfdr} also holds in the form
\[
\|\nabla\M_{\alpha,-1} f\|_{L^{(p^{-1}-\alpha/d)^{-1}}(D)}
\leq C_{d,\alpha,p}
\|\nabla f\|_{L^p(E)}
.
\]
\Cref{theo_mfdrdyadic} holds with the dyadic version of \(E\)
and \cref{eq_finitedyadiclinear} where the sum on the left hand side is over any subset \(\Q\subset\Q_\alpha\)
and the integral on the right is over \(\bigcup\{cQ:Q\in\Q\}\).
These localized results directly follow from the same proof as the global results,
if one keeps track of the balls and cubes which are being dealt with.
The respective localized version of \cref{theo_goal} can be proven
if one has \cref{lem_gradientbound} without the differentiability assumption.
Then in the reduction of \cref{theo_goal} to \cref{theo_mfdr}
one could apply \cref{theo_mfdr} to the same function \(f\) and \(\Q_\alpha\) for which one is showing \cref{theo_goal},
bypassing the approximation step and
therefore preserving the locality of \cref{theo_mfdr}.
This is in contrast to the actual local fractional maximal operator,
for whom \cref{theo_goal} fails by \cite[Example~4.2]{MR3319617}, which works for \(\alpha>0\).
However if \(\alpha=0\) and \(p>1\) then the local fractional maximal operator is again bounded due to \cite{MR1650343},
and by \cite{weigt2020variationcharacteristic} for \(\alpha=0\) and \(p=1\) and characteristic functions.
\end{rem}

Dyadic cubes are much easier to deal with than balls,
but the dyadic version still serves as a model case for the continuous versions since both versions share many properties.
This can be observed in \cite{weigt2020variationcharacteristic},
where we proved \(\var\M_0\ind E\leq C_d\var\ind E\) for the dyadic maximal operator and the uncentered Hardy-Littlewood maximal operator.
The proof for the dyadic maximal operator is much shorter,
but the same proof idea also works for the uncentered maximal operator.
Also in this paper a part of the proof of \cref{theo_mfdrdyadic} for the dyadic maximal operator
is used also in the proof of \cref{theo_mfdr} for the Hardy-Littlewood maximal operator.

The plan for the proof of \cref{theo_goal}
is the following.
For simplicity we write it down for \(p=1\).
\begin{align*}
\int|\nabla\M_\alpha f|^{\f d{d-\alpha}}
&\leq(d-\alpha)^{\f d{d-\alpha}}
\int(\M_{\alpha,-1}f)^{\f d{d-\alpha}}
\\
&=
d(d-\alpha)^{\f \alpha{d-\alpha}}
\int_0^\infty\lambda^{\f \alpha{d-\alpha}}\lm{\{\M_{\alpha,-1}f>\lambda\}}\intd\lambda
\\
&=
d(d-\alpha)^{\f \alpha{d-\alpha}}
\int_0^\infty\lambda^{\f \alpha{d-\alpha}}\lm{\bigcup\{\cl B:B\in\B_\alpha,r(B)^{\alpha-1}f_B>\lambda\}}\intd\lambda
\\
&\lesssim_\alpha
\int_0^\infty\lambda^{\f \alpha{d-\alpha}}\sum_{B\in\tilde\B_\alpha,cr(B)^{\alpha-1}f_B>\lambda}\lm B\intd\lambda
\\
&=
\sum_{B\in\tilde\B_\alpha}\int_0^{cr(B)^{\alpha-1}f_B}\lambda^{\f\alpha{d-\alpha}}\intd\lambda
\\
&=
\f{(1-\alpha/d)c^{\f d{d-\alpha}}}{(d\sigma_d)^{\f d{d-\alpha}}}
\sum_{B\in\tilde\B_\alpha}(f_B\sm{\partial B})^{\f d{d-\alpha}}
\\
&\leq
\f{(1-\alpha/d)c^{\f d{d-\alpha}}}{(d\sigma_d)^{\f d{d-\alpha}}}
\biggl(
\sum_{B\in\tilde\B_\alpha}f_B\sm{\partial B}
\biggr)^{\f d{d-\alpha}}
\\
&\lesssim_\alpha
\biggl(
\sum_{Q\in\tilde\Q_\alpha}f_Q\sm{\partial Q}
\biggr)^{\f d{d-\alpha}}
\\
&\leq C_{d,\alpha}
(\var f)^{\f d{d-\alpha}}
,
\end{align*}
where \(\sigma_d\) is the volume of the \(d\)-dimensional unit ball.
In the second step we apply the layer cake formula,
in the forth step we pass from a union of arbitrary balls to very disjoint balls \(\tilde\B_\alpha\) with a Vitali covering argument,
in the eighth step
we pass from those balls to comparable dyadic cubes
and as the last step use a result from the dyadic setting.

We use \(\alpha>0\) as follows.
Let \(A\) be a ball and \(B(x,r)\) be a smaller ball that intersects \(A\).
Then by \(A\subset B(x,3r(A))\) we have \(3^{\alpha-d} r(A)^\alpha f_A\leq (3r(A))^\alpha f_{B(x,3r(A))}\).
Thus if \(r^\alpha f_{B(x,r)}\leq3^{\alpha-d} r(A)^\alpha f_A\)
then \(B(x,r)\) is not used by the fractional maximal operator.
Hence it suffices to consider balls \(B\) with \(3^{d-\alpha}(r(B)/r(A))^\alpha f_B > f_A\).
From that we can conclude \(f_B>2f_A\) or \(r(B)\gtrsim_\alpha r(A)\).
Thus for any two balls \(B,A\) used by the fractional maximal operator, one of the following alternatives applies.
\begin{enumerate}
\item
The balls \(B\) and \(A\) are disjoint.
\item 
The intervals \((f_B/2,f_B)\) and \((f_A/2,f_A)\) are disjoint.
\item
The radii \(r(B)\) and \(r(A)\) are comparable.
\end{enumerate}
We use this in the forth step of the proof strategy above.
We use a dyadic version of these alternatives in last step.
Note that for \(\alpha=0\) optimal balls \(B\) of arbitrarily different sizes with similar values \(f_B\) can intersect.

\begin{rem}
There is a proof of \cref{theo_goal}
which has a structure parallel to the one presented above,
but three steps are replaced.
The estimate
\(|\nabla\M_\alpha f|^{\f d{d-\alpha}}\leq(d-\alpha)^{\f d{d-\alpha}}\M_{\alpha,-1} f\)
is replaced by
\(|\nabla\M_\alpha f|^{\f d{d-\alpha}}\leq(d-\alpha)^{\f \alpha{d-\alpha}}|\nabla\M_\alpha f|(\M_{\alpha,-1} f)^{\f\alpha{d-\alpha}}\),
the layer cake formula is replaced by the coarea formula \cite[Theorem~3.11]{MR3409135}
and the Vitali covering argument is replaced by \cite[Lemma~4.1]{weigt2020variationcharacteristic}
which deals with the boundary of balls instead of their volume.
Otherwise it is identical to the proof presented in this paper.
\begin{align*}
\int|\nabla\M_\alpha f|^{\f d{d-\alpha}}
&\leq(d-\alpha)^{\f\alpha{d-\alpha}}
\int|\nabla\M_\alpha f|(\M_{\alpha,-1}f)^{\f\alpha{d-\alpha}}
\\
&=(d-\alpha)^{\f\alpha{d-\alpha}}
\int_0^\infty\int_{\mb{\{\M_\alpha f>\lambda\}}}(\M_{\alpha,-1}f)^{\f\alpha{d-\alpha}}\intd\lambda
\\
&=(d-\alpha)^{\f\alpha{d-\alpha}}
\int_0^\infty\int_{\mb{\bigcup\{\cl B:B\in\B_\alpha,r(B)^\alpha f_B>\lambda\}}}(r(B_x)^{\alpha-1}f_{B_x})^{\f\alpha{d-\alpha}}\intd\sm x\intd\lambda
\\
&\lesssim_\alpha
\int_0^\infty\sum_{B\in\tilde\B_\alpha,r(B)^\alpha f_B>\lambda}\sm{\partial B}(r(B)^{\alpha-1}f_B)^{\f\alpha{d-\alpha}}\intd\lambda
\\
&\lesssim_\alpha
\sum_{B\in\tilde\B_\alpha}(f_B\sm{\partial B})^{\f d{d-\alpha}}
\end{align*}
and from there on arrive exactly as before at the bound by \((\var f)^{\f d{d-\alpha}}\).
This motivates a similar replacement in the dyadic setting.
Instead of proving the boundedness of \(\|\M_{\alpha,-1}f\|_{d/(d-\alpha)}\), \cref{theo_mfdrdyadic},
one might bound 
\[
\int_0^\infty\int_{\mb{\{\M_\alpha f>\lambda\}}}(\M_{\alpha,-1}f)^{\f\alpha{d-\alpha}}\intd\lambda
.
\]
Note that formally 
\[
\int
|\nabla\M_\alpha f(x)|(\M_{\alpha,-1} f(x))^{\f\alpha{d-\alpha}}
\intd x
\]
is not well defined because \(\M_{\alpha,-1}f\) jumps where \(\nabla\M_\alpha f\) is supported.
\end{rem}

\begin{rem}
In the proof of \cref{theo_goal,theo_mfdr,eq_finitedyadiclinear,theo_mfdrdyadic}
we do not a priori need \(f\in L^p(\mathbb{R}^d)\),
it suffices to have \(f\in L^q(\mathbb{R}^d)\) for some \(1\leq q\leq p\).
However from \(\|\nabla f\|_p<\infty\)
we can then anyways conclude \(f\in L^p(\mathbb{R}^d)\) by Sobolev embedding.
\end{rem}

\paragraph{Acknowledgements}
I would like to thank my supervisor, Juha Kinnunen, for all of his support.
I would like to thank Olli Saari for introducing me to this problem.
I am also thankful for the discussions with Juha Kinnunen, Panu Lahti and Olli Saari
who made me aware of a version of the coarea formula \cite[Theorem~3.11]{MR3409135},
which was used in the first draft of the proof,
and for discussions with David Beltran, Cristian Gonz\'alez-Riquelme and Jose Madrid, in particular about the centered fractional maximal operator.
The author has been supported by the Vilho, Yrj\"o and Kalle V\"ais\"al\"a Foundation of the Finnish Academy of Science and Letters.

\section{Reformulation}
\label{sec_setup}
In order to avoid writing absolute values,
we consider only nonnegative functions \(f\) for the rest of the paper.
We can still conclude \cref{theo_goal,theo_mfdr,theo_mfdrdyadic,eq_finitedyadiclinear} for signed functions because
\(|f|_B=f_B\)
and
\(\bigl|\nabla|f|(x)\bigr|\leq|\nabla f(x)|\).
Recall the set of dyadic cubes
\[
\bigcup_{n\in\mathbb{Z}}
\Bigl\{
[x_1,x_1+2^n)\times\ldots\times[x_d,x_d+2^n):\forall i\in\{1,\ldots,n\}\ x_i\in2^n\mathbb{Z}
\Bigr\}
.
\]
For a set \(\B\) of balls or dyadic cubes we denote
\[
\bigcup\B
=
\bigcup_{B\in\B}B
\]
as is commonly used in set theory.
By \(a\lesssim_{\gamma_1,\ldots,\gamma_n} b\) we mean that there exists a constant \(C_{d,\gamma_1,\ldots,\gamma_n}\)
that depends only on the values of \(\gamma_1,\ldots,\gamma_n\) and the dimension \(d\)
and such that \(a\leq C_{d,\gamma_1,\ldots,\gamma_n} b\).

We work in the setting of functions of bounded variation, as in Evans-Gariepy \cite[Section~5]{MR3409135}.
For an open set \(\Omega\subset\mathbb{R}^d\) a function \(u\in L^1_\loc(\Omega)\) is said to have locally bounded variation
if for each open and compactly supported \(V\subset\Omega\) we have
\[\sup\Bigl\{\int_Vu\div\varphi:\varphi\in C^1_{\tx c}(V;\mathbb{R}^d),\ |\varphi|\leq1\Bigr\}<\infty.\]
Such a function comes with a measure \(\mu\) and a function \(\nu:\Omega\rightarrow\mathbb{R}^d\) that has \(|\nu|=1\) \(\mu\)-a.e.\ such that for all \(\varphi\in C^1_{\tx c}(\Omega;\mathbb{R}^d)\) we have
\[\int u\div\varphi=\int\varphi\nu\intd \mu.\]
We denote \(\nabla u=-\nu\mu\)
and define the variation of \(u\) by
\[
\var_\Omega u
=
\mu(\Omega)
=
\|\nabla u\|_{L^1(\Omega)}
.
\]
If \(\nabla u\) is a locally integrable function we call \(u\) weakly differentiable.

\begin{lem}
\label{lem_convergence}
Let \(1<p\leq\infty\) and \((u_n)_n\) be a sequence of locally integrable functions with
\[
\sup_n
\|\nabla u_n\|_p
<\infty
\]
which converge to \(u\) in \(L^1_\loc(\mathbb{R}^d)\).
Then \(u\) is weakly differentiable and
\[
\|\nabla u\|_p
\leq
\limsup_n
\|\nabla u_n\|_p
.
\]
\end{lem}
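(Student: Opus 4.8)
The statement is a standard lower semicontinuity fact for the gradient, stated here in the form needed later, so the plan is to reduce to the elementary weak-$*$ / Fatou argument via the distributional definition of $\nabla u$. First I would record that convergence in $L^1_\loc(\mathbb{R}^d)$ is exactly what is needed to pass to the limit in the integration-by-parts identity: for any fixed test field $\varphi\in C^1_{\mathrm c}(\mathbb{R}^d;\mathbb{R}^d)$ with $|\varphi|\leq 1$, supported in some ball $V$, we have $\int u_n\div\varphi\to\int u\div\varphi$, because $\div\varphi$ is bounded and compactly supported and $u_n\to u$ in $L^1(V)$.

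Next I would bound $\int u_n\div\varphi$ uniformly. By definition of $\nabla u_n$ (and $\var$), $\bigl|\int u_n\div\varphi\bigr|=\bigl|\int\varphi\cdot\nabla u_n\bigr|\leq\int_V|\nabla u_n|$. When $p>1$, Hölder's inequality on the ball $V$ gives $\int_V|\nabla u_n|\leq\lm V^{1-1/p}\|\nabla u_n\|_p\leq\lm V^{1-1/p}\sup_n\|\nabla u_n\|_p<\infty$ (for $p=\infty$ the factor is simply $\lm V\sup_n\|\nabla u_n\|_\infty$; here one uses $\lm V<\infty$, which is why the local hypothesis suffices and $p=1$ must be excluded — an $L^1$-bounded sequence of gradients need only converge weak-$*$ to a measure, not a function). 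Taking the limit, $\bigl|\int u\div\varphi\bigr|\leq\lm V^{1-1/p}\sup_n\|\nabla u_n\|_p$ for all admissible $\varphi$, so $u$ has locally bounded variation and thus comes with a gradient measure $\nabla u=-\nu\mu$.

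To upgrade this to the quantitative $L^p$ bound, I would test against $\varphi$ and use the representation $\int\varphi\cdot\nabla u=\int u\div\varphi=\lim_n\int u_n\div\varphi=-\lim_n\int\varphi\cdot\nabla u_n$, hence $\bigl|\int\varphi\cdot\nabla u\bigr|\leq\limsup_n\int|\varphi|\,|\nabla u_n|\leq\limsup_n\|\varphi\|_{p'}\|\nabla u_n\|_p$ where $p'=p/(p-1)$ is the conjugate exponent. Taking the supremum over $\varphi\in C^1_{\mathrm c}(\mathbb{R}^d;\mathbb{R}^d)$ with $\|\varphi\|_{p'}\leq1$ identifies the left-hand side with the total-variation-type quantity $\sup\{\int\varphi\cdot\nabla u\}$, which by duality of $L^p$ against $L^{p'}$ (valid since $p>1$, so $p'<\infty$ and $C^1_{\mathrm c}$ is dense in $L^{p'}$) equals $\|\nabla u\|_p$ if $\nabla u$ is a function and $+\infty$ otherwise. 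Since the right-hand side is $\limsup_n\|\nabla u_n\|_p<\infty$, we conclude both that $\nabla u\in L^p$, i.e.\ $u$ is weakly differentiable, and that $\|\nabla u\|_p\leq\limsup_n\|\nabla u_n\|_p$.

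The only genuine subtlety — the step I would be most careful about — is the duality characterization of $\|\nabla u\|_p$ as a supremum over $C^1_{\mathrm c}$ test fields, i.e.\ that passing the supremum through correctly shows $\nabla u$ is an $L^p$ \emph{function} (not merely a measure) with the claimed norm; this is exactly where $p>1$ is used, and it is the reason the lemma is false at $p=1$. Everything else is routine.
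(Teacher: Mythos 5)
Your proof is correct. It rests on the same underlying fact as the paper's proof, namely $L^p$--$L^{p'}$ duality for $p>1$, but the packaging is genuinely different: the paper extracts a weakly (for $p=\infty$, weak-$*$) convergent subsequence of $(\nabla u_n)_n$ using the weak compactness of bounded sets in $L^p(\mathbb{R}^d)$, calls the limit $v$, and identifies $\nabla u=v$ by testing against $\partial_i\varphi$; you instead avoid any compactness or subsequence extraction and bound the distributional gradient directly, showing $\bigl|\int u\div\varphi\bigr|\leq\limsup_n\|\nabla u_n\|_p\,\|\varphi\|_{p'}$ and then invoking density of $C^1_{\mathrm c}$ in $L^{p'}$ together with Riesz representation to conclude that the measure $\nabla u$ has an $L^p$ density with the stated norm bound. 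Your route is slightly more self-contained (no appeal to Banach--Alaoglu or lower semicontinuity of the norm under weak convergence, and no subsequence bookkeeping), while the paper's is shorter to state given the compactness theorem as a black box; both correctly isolate $p>1$ as the hypothesis that makes the limit gradient a function rather than merely a measure. Your intermediate observation that $u$ has locally bounded variation is not strictly needed once the quantitative dual bound is in hand, but it does no harm.
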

\begin{proof}
By the weak compactness of \(L^p(\mathbb{R}^d)\) there is a subsequence,
for simplicity also denoted by \((u_n)_n\),
and a \(v\in L^p(\mathbb{R}^d)^d\)
such that \(\nabla u_n\rightarrow v\) weakly in \(L^p(\mathbb{R}^d)\)
and \(\|v\|_p\leq\limsup_n\|\nabla u_n\|_p\).
Let \(\varphi\in C^\infty_c(\mathbb{R}^d)\) and \(i\in\{1,\ldots,d\}\).
Then
\begin{align*}
\int u\partial_i\varphi
=
\lim_{n\rightarrow\infty}
\int u_n\partial_i\varphi
=
-
\lim_{n\rightarrow\infty}
\int \partial_iu_n\varphi
=
-
\int v_i\varphi
\end{align*}
which means \(\nabla u=v\).
\end{proof}

\subsection{Hardy-Littlewood Maximal Operator}\label{subsec_setuphl}

In this section we reduce \cref{theo_goal} to \cref{theo_mfdr}.
Let \(1\leq p<d/\alpha\) and \(f\in L^p(\mathbb{R}^d)\).
For \(x\in\mathbb{R}^d\)
consider for the uncentered maximal operator
the set of balls \(B\) with \(x\in\cl B\)
and 
\(
\M_\alpha f(x)
=
r(B)^\alpha f_B
,
\)
and for the centered maximal operator such balls \(B\) which are centered in \(x\).
Recall that we denote by \(\B_\alpha(x)\)
the subset of those balls that have the largest radius.

\begin{lem}
\label{lem_optimalball}
Let \(\M_\alpha\in\{\Mc_\alpha,\Mu_\alpha\}\)
and \(1\leq p<d/\alpha\).
Let \(f\in L^p(\mathbb{R}^d)\)
and \(x\in\mathbb{R}^d\) be a Lebesgue point of \(f\).
Then \(\B_\alpha(x)\) is nonempty.
\end{lem}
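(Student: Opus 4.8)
The plan is to show that the supremum in the definition of \(\M_\alpha f(x)\) is actually attained, and that among the balls attaining it there is one of largest radius (in the uncentered case, one that is maximal with respect to inclusion). This follows from a routine compactness argument once one checks that any maximizing sequence of balls has radii bounded away from both \(0\) and \(\infty\); the bound away from \(\infty\) uses \(\alpha<d/p\) and \(f\in L^p\), and the bound away from \(0\) uses \(\alpha>0\) and that \(x\) is a Lebesgue point.

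First I would dispose of the trivial case \(f\equiv0\) (then \(\M_\alpha f\equiv0\) and all statements of the paper hold trivially), so that \(M\seq\M_\alpha f(x)>0\): indeed some ball \(B_0\) has \(f_{B_0}>0\), and any large enough ball \(B\supseteq B_0\) with \(x\in\cl B\) still has \(f_B>0\). Next I record two elementary estimates. \emph{(Large balls.)} By Hölder's inequality every ball \(B\) of radius \(r\) satisfies \(r^\alpha f_B\leq\sigma_d^{-1/p}\|f\|_p\,r^{\alpha-d/p}\), and since \(\alpha<d/p\) this tends to \(0\) as \(r\to\infty\); hence there is \(R<\infty\) with \(r^\alpha f_B<M\) whenever \(r(B)>R\). \emph{(Small balls.)} If \(B\) has radius \(r\) and \(x\in\cl B\) then \(B\subseteq B(x,2r)\), so \(f_B\leq 2^d\avint_{B(x,2r)}|f|\), and since \(x\) is a Lebesgue point the right-hand side stays bounded as \(r\to0\); as \(\alpha>0\) we get \(\sup\{r(B)^\alpha f_B:x\in\cl B,\ r(B)\leq r\}\to0\) as \(r\to0\), so there is \(\delta>0\) with \(r^\alpha f_B<M\) whenever \(r(B)<\delta\). (For the centered operator the small-ball estimate is simply \(r^\alpha f_{B(x,r)}\to0\) as \(r\to0\).)

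Now take admissible balls \(B_n\) — centered at \(x\) in the centered case, with \(x\in\cl{B_n}\) in the uncentered case — with \(r(B_n)^\alpha f_{B_n}\to M\). By the two estimates, \(r(B_n)\in[\delta,R]\) for large \(n\), and the centers \(c_n\) obey \(|c_n-x|\leq r(B_n)\leq R\); passing to a subsequence, \(c_n\to c_*\) and \(r(B_n)\to r_*\in[\delta,R]\). With \(B_*\seq B(c_*,r_*)\) one has \(\ind{B_n}\to\ind{B_*}\) Lebesgue-a.e., and \(|f|\ind{B_n}\leq|f|\ind K\) for a fixed ball \(K\) with \(|f|\ind K\in L^1(\mathbb{R}^d)\), so dominated convergence gives \(\int_{B_n}|f|\to\int_{B_*}|f|\); since also \(\L(B_n)\to\L(B_*)=\sigma_d r_*^d>0\) we conclude \(f_{B_n}\to f_{B_*}\), hence \(r_*^\alpha f_{B_*}=M\). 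Moreover \(|c_*-x|\leq r_*\), i.e.\ \(x\in\cl{B_*}\), so the supremum defining \(\M_\alpha f(x)\) is attained. For the largest-radius statement, let \(\mathcal O\) be the set of admissible balls attaining \(M\); it is nonempty and, by the two estimates, all its members have radius in \([\delta,R]\). Set \(\rho\seq\sup\{r(B):B\in\mathcal O\}\leq R\) and run the same compactness argument on a sequence in \(\mathcal O\) whose radii tend to \(\rho\): the limit ball lies in \(\mathcal O\) and has radius \(\rho\). In the centered case this exhibits \(\rho\) as the largest radius with \(\Mc_\alpha f(x)=\rho^\alpha f_{B(x,\rho)}\), so \(\Bc_\alpha(x)\neq\emptyset\). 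In the uncentered case, take \(B_*\in\mathcal O\) with \(r(B_*)=\rho\); any ball \(A\supsetneq B_*\) has \(r(A)>\rho\), hence \(A\notin\mathcal O\), while \(r(A)^\alpha f_A\leq\Mu_\alpha f(x)=M\), so \(r(A)^\alpha f_A<M\); thus \(B_*\in\Bu_\alpha(x)\neq\emptyset\).

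I expect the only genuinely delicate point to be the small-ball estimate — the lower bound on the radii of near-maximizers — which is precisely where \(\alpha>0\) and the Lebesgue-point hypothesis enter; the rest is Hölder's inequality and sequential compactness of bounded balls.
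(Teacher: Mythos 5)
Your proposal is correct and takes essentially the same route as the paper: bound the radii of a maximizing sequence away from \(0\) (Lebesgue point plus \(\alpha>0\)) and from \(\infty\) (H\"older plus \(\alpha<d/p\)), then pass to a convergent subsequence of centers and radii. You additionally spell out the largest-radius step and the degenerate case \(f\equiv0\), which the paper leaves to ``a similar argument''; these are elaborations, not a different approach.
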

\begin{proof}
We formulate one proof that works both for the centered and uncentered fractional maximal operator.
Let \((B_n)_n\) a sequence of balls with \(x\in B_n\) and
\[
\M_\alpha f(x)=\lim_{n\rightarrow\infty}r(B_n)^\alpha f_{B_n}
.
\]
Assume there is a subsequence \((n_k)_k\) with \(r(B_{n_k})\rightarrow0\).
Then \(f_{B_{n_k}}\rightarrow f(x)\)
and thus
\[
\limsup_{k\rightarrow\infty} r(B_{n_k})^\alpha f_{B_{n_k}}
\leq
f(x) \limsup_{n\rightarrow\infty} r(B_{n_k})^\alpha
=0
,
\]
a contradiction.
Assume there is a subsequence \((n_k)_k\) with \(r(B_{n_k})\rightarrow\infty\).
Then
\begin{align*}
\limsup_{k\rightarrow\infty} r(B_{n_k})^\alpha f_{B_{n_k}}
&\leq
\limsup_{k\rightarrow\infty}
r(B_{n_k})^\alpha
\lm{B_{n_k}}^{-1}
\lm{B_{n_k}}^{1-\f1p}
\Bigl(\int_{B_{n_k}} f^p\Bigr)^{\f1p}
\\
&=
\limsup_{k\rightarrow\infty}
\sigma_d^{-\f1p}
r(B_{n_k})^{\alpha-\f dp}
\Bigl(\int_{B_{n_k}} f^p\Bigr)^{\f1p}
\\
&\leq
\sigma_d^{-\f1p}
\limsup_{k\rightarrow\infty}
r(B_{n_k})^{\alpha-\f dp}
\|f\|_p
=
0
\end{align*}
since \(\|f\|_p<\infty\),
a contradiction.
Hence there is a subsequence \((n_k)_k\) such that \(r(B_{n_k})\) converges to some value \(r\in(0,\infty)\).
We can conclude that there is a ball \(B\) with \(x\in\cl B\) and \(r(B)=r\) and
\(
\int_{B_{n_k}} f
\rightarrow
\int_B f
.
\)
So we have
\[
\M_\alpha f(x)
=
\lim_{k\rightarrow\infty} r(B_{n_k})^\alpha f_{B_{n_k}}
=
r(B)^\alpha f_B
.
\]
A similar argument shows that there exist a largest ball \(B\) for which \(\sup_{\cl B\ni x}r(B)^\alpha f_B\) is attained.
\end{proof}

\begin{lem}
\label{lem_continuous}
Let \(\M_\alpha\in\{\Mc_\alpha,\Mu_\alpha\}\).
and \(f\in L^\infty(\mathbb{R}^d)\) have bounded variation.
Then \(\M_\alpha f\) is locally Lipschitz.
\end{lem}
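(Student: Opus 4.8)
The plan is to fix an arbitrary $R>0$ and prove that $\M_\alpha f$ is Lipschitz on the ball $B(0,R)$ with a constant depending only on $R,d,\alpha,\|f\|_1,\|f\|_\infty$; since $R$ is arbitrary this gives local Lipschitz continuity. The case $f\equiv0$ is trivial, so assume $f\not\equiv0$ (and recall $f\geq0$). Since $\BV(\mathbb{R}^d)\subset L^1(\mathbb{R}^d)$ we have $f\in L^1(\mathbb{R}^d)\cap L^\infty(\mathbb{R}^d)$, and the whole argument rests on the two elementary bounds $f_B\leq\|f\|_\infty$ and $f_B\leq\sigma_d^{-1}\|f\|_1 r(B)^{-d}$, valid for every ball $B$.

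First I would collect size information about $\M_\alpha f$. Multiplying the two bounds above by $r(B)^{\alpha}$ and optimizing over $r(B)$ — here $0<\alpha<d$ is used — gives the global bound $\M_\alpha f\leq K:=\sigma_d^{-\alpha/d}\|f\|_1^{\alpha/d}\|f\|_\infty^{1-\alpha/d}$. For a matching lower bound on $B(0,R)$, fix $\rho_1>0$ with $m:=\int_{B(0,\rho_1)}f>0$; then for every $x\in B(0,R)$ the admissible ball $B(x,R+\rho_1)$ contains $B(0,\rho_1)$, so $\M_\alpha f(x)\geq(R+\rho_1)^{\alpha-d}\sigma_d^{-1}m=:c_R>0$. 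Next, by \cref{lem_optimalball} (its proof works at every $x$ once $f\in L^\infty$, since then balls of small radius are ruled out directly via $r(B)^{\alpha}f_B\leq\|f\|_\infty r(B)^{\alpha}$), there is an optimal ball $B=B(c,r)$ for $x$, with $c=x$ in the centered case and $x\in\cl B$ always, satisfying $r^{\alpha}f_B=\M_\alpha f(x)$. Plugging $c_R\leq\M_\alpha f(x)\leq K$ into the two elementary bounds gives $r_0\leq r\leq R_0$ with $0<r_0\leq R_0<\infty$ depending only on $R,d,\alpha,\|f\|_1,\|f\|_\infty$; the lower bound on $r$ uses $\alpha>0$, the upper bound uses $\alpha<d$.

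The main step is the comparison. Fix $x\in B(0,R)$, let $y\in\mathbb{R}^d$, put $h=|x-y|$, and let $B=B(c,r)$ be an optimal ball for $x$. Enlarge it to $B':=B(c,r+h)$ in the uncentered case and to $B':=B(y,r+h)$ in the centered case; in either case $B\subseteq B'$, $r(B')=r+h$, and $B'$ is admissible in the definition of $\M_\alpha f(y)$ (its closure contains $y$, respectively it is centered at $y$). Hence, using $f\geq0$ and $B\subseteq B'$,
\[
\M_\alpha f(y)\geq(r+h)^{\alpha}\frac{1}{|B'|}\int_{B'}f\geq(r+h)^{\alpha}\frac{|B|}{|B'|}f_B=\Bigl(\frac{r}{r+h}\Bigr)^{d-\alpha}\M_\alpha f(x)\geq\Bigl(1-(d-\alpha)\frac{h}{r}\Bigr)\M_\alpha f(x),
\]
where the last inequality is $(1+t)^{-(d-\alpha)}\geq1-(d-\alpha)t$ for $t\geq0$. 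Thus $\M_\alpha f(x)-\M_\alpha f(y)\leq(d-\alpha)h\,r^{\alpha-1}f_B\leq(d-\alpha)\max\{r_0^{\alpha-1},R_0^{\alpha-1}\}\|f\|_\infty\,h=:L_R\,h$ by the radius bounds. Exchanging $x$ and $y$ (both in $B(0,R)$) gives the reverse inequality, so $|\M_\alpha f(x)-\M_\alpha f(y)|\leq L_R|x-y|$ for all $x,y\in B(0,R)$.

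I expect the only delicate point to be the uniform lower bound $c_R>0$ for $\M_\alpha f$ on $B(0,R)$ and the resulting two-sided control $r_0\leq r\leq R_0$ of the optimal radii: this is exactly where $\alpha>0$ enters, and it is precisely what fails for $\alpha=0$, where optimal balls can be arbitrarily small, so the lemma should not be expected to hold there. Everything else is elementary; in particular no prior continuity or semicontinuity of $\M_\alpha f$ is needed, since the Lipschitz estimate is proved directly at every pair of points.
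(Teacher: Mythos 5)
Your proof is correct, but it takes a genuinely different route from the paper's. The paper argues structurally: it first shows that balls of radius below some $r_0>0$ (depending on $\|f\|_\infty$ and a ball where $f$ has positive average) are never competitive in the supremum, and then writes $\M_\alpha f$ locally as a supremum of the convolutions $\sigma_d^{-1}r^{\alpha-d}f*\ind{B(z,r)}$ with $r\geq r_0$, each of which is Lipschitz with constant $\sigma_d^{-1}r_0^{\alpha-d}\var f$; the supremum inherits that constant. You instead derive two-sided bounds $r_0\leq r\leq R_0$ on the optimal radii over a compact set (the lower bound from $\alpha>0$ and $\|f\|_\infty$, the upper bound from $\alpha<d$ and $\|f\|_1$, via a uniform positive lower bound $c_R$ for $\M_\alpha f$ on $B(0,R)$) and then run a direct two-point comparison by enlarging the optimal ball — essentially a quantitative, finite-difference version of \cref{lem_gradientbound}. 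Both arguments are sound; all your individual steps check out (the enlarged ball is admissible for $y$ in both the centered and uncentered cases, the inequality $(1+t)^{-(d-\alpha)}\geq1-(d-\alpha)t$ is valid, and the optimal-ball existence indeed needs no Lebesgue-point hypothesis once $f\in L^\infty$). What each buys: your argument never uses the BV hypothesis, only $f\in L^1\cap L^\infty$, so it proves a slightly more general statement and keeps everything at the level of elementary averaging estimates; the paper's argument is shorter, needs only the lower radius bound (large balls are harmless for it since $r^{\alpha-d}\var f$ decreases in $r$), and produces a Lipschitz constant expressed through $\var f$. A cosmetic remark: in your final estimate the upper radius bound $R_0$ is actually unnecessary, since $(d-\alpha)hr^{\alpha-1}f_B=(d-\alpha)(h/r)\M_\alpha f(x)\leq(d-\alpha)K h/r_0$ already closes the argument.
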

\begin{proof}
If \(f=0\) then the statement is obvious,
so consider \(f\neq0\).
Let \(B\) be a ball.
Then there is a ball \(A\supset B\) with \(f_A>0\).
Define
\[
r_0
=
2r(A)
\Bigl(
\f{f_A}{2^d\|f\|_\infty}
\Bigr)^{1/\alpha}
\]
and let \(x\in B\).
Then \(A\subset B(x,2r(A)\)
so that for \(r<r_0\) we have
\[
r^\alpha f_{B(x,r)}
<
(2r(A))^\alpha \f{f_A}{2^d\|f\|_\infty}\|f\|_\infty
\leq
(2r(A))^\alpha f_{B(x,2r(A))}
.
\]
That means that on \(B\) the maximal function \(\M_\alpha f\)
is the supremum over all functions \(\sigma_d^{-1}r^{\alpha-d}f*\ind{B(z,r)}\)
with \(r\geq r_0\) and \(z\) such that \(0\in B(z,r)\) for the uncentered operator and \(z=0\) for the centered.
Those convolutions are weakly differentiable with
\[
\nabla(r^{\alpha-d}f*\ind{B(z,r)})
=
r^{\alpha-d}(\nabla f)*\ind{B(z,r)}
\]
so that
\[
|\nabla(r^{\alpha-d}f*\ind{B(z,r)})|
\leq
r^{\alpha-d}\var f
\leq
r_0^{\alpha-d}\var f
.
\]
Thus on \(B\) the maximal function \(\M_\alpha f\)
is a supremum of functions with Lipschitz constant \(\sigma_d^{-1}r_0^{\alpha-d}\var f\)
and hence itself Lipschitz with the same constant.
\end{proof}

The following has essentially already been observed in \cite{MR3319617,MR1979008,MR2280193,MR4001028}.
\begin{lem}
\label{lem_gradientbound}
Let \(\M_\alpha\in\{\Mc_\alpha,\Mu_\alpha\}\) and
let \(\M_\alpha f\) be differentiable in \(x\).
Then for every \(B\in\B_\alpha(x)\) we have
\[
|\nabla\M_\alpha f(x)|
\leq
(d-\alpha)r(B)^{\alpha-1}f_B
.
\]
In the uncentered case
if \(x\in B\) we have
\(
\nabla\Mu_\alpha f(x)=0
.
\)
\end{lem}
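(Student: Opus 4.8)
The plan is to derive, for every unit vector $v\in\mathbb{R}^d$, the one-sided lower bound
\[
\la\nabla\M_\alpha f(x),v\ra\ge-(d-\alpha)\,r(B)^{\alpha-1}f_B ,
\]
by comparing $\M_\alpha f$ in a neighbourhood of $x$ against a slightly enlarged copy of the optimal ball $B\in\B_\alpha(x)$; applying this bound with $-v$ in place of $v$ and using $|\nabla\M_\alpha f(x)|=\sup_{|v|=1}\la\nabla\M_\alpha f(x),v\ra$ then gives the claimed estimate.

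Fix $B=B(y,r)\in\B_\alpha(x)$, so that $\M_\alpha f(x)=r^\alpha f_B$, with $y=x$ in the centered case and $|x-y|\le r$ in the uncentered case. Given $z\in\mathbb{R}^d$, put $h=|z-x|$ and let $B'=B(z,r+h)$ in the centered case and $B'=B(y,r+h)$ in the uncentered case. A triangle-inequality check shows $B\subset B'$, and that $B'$ is admissible in the definition of $\M_\alpha f(z)$: in the centered case $B'$ is centered at $z$, and in the uncentered case $z\in\cl{B'}$. Since $f\ge0$ and $B\subset B'$ we have $\int_{B'}f\ge\int_Bf$, hence
\[
\M_\alpha f(z)\ge(r+h)^\alpha f_{B'}\ge(r+h)^\alpha\frac{\lm B}{\lm{B'}}f_B=(r+h)^{\alpha-d}r^d f_B .
\]
Subtracting $\M_\alpha f(x)=r^\alpha f_B$ and inserting the expansion $(r+h)^{\alpha-d}=r^{\alpha-d}+(\alpha-d)r^{\alpha-d-1}h+O(h^2)$ as $h\to0$ --- an expansion in the scalar $h$ alone, with remainder uniform in the direction of $z-x$ --- yields
\[
\M_\alpha f(z)-\M_\alpha f(x)\ge-(d-\alpha)r^{\alpha-1}f_B\,|z-x|+O(|z-x|^2) .
\]

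To finish I would invoke differentiability of $\M_\alpha f$ at $x$: writing $\M_\alpha f(x+tv)-\M_\alpha f(x)=t\la\nabla\M_\alpha f(x),v\ra+o(t)$ for $t\downarrow0$, dividing the previous inequality with $z=x+tv$ by $t$, and letting $t\to0$ gives $\la\nabla\M_\alpha f(x),v\ra\ge-(d-\alpha)r^{\alpha-1}f_B$ for every unit $v$, as announced. For the last assertion, suppose in the uncentered case that $x$ lies in $B$, not merely in $\cl B$. Then $z\in B$ for all $z$ near $x$, so $\Mu_\alpha f(z)\ge r(B)^\alpha f_B=\Mu_\alpha f(x)$ there; hence $x$ is a local minimum of $\Mu_\alpha f$, and differentiability at $x$ forces $\nabla\Mu_\alpha f(x)=0$.

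The argument is largely routine. The points that need a little care are the elementary checks that the enlarged ball $B'$ is admissible and contains $B$ in both the centered and uncentered cases, and verifying that the remainder in the expansion of $(r+h)^{\alpha-d}$ is genuinely $O(h^2)$ --- which holds since $\alpha<d$ keeps the exponent $\alpha-d-2$ negative, so the second derivative of $h\mapsto(r+h)^{\alpha-d}$ stays bounded for $h\ge0$ --- so that it is absorbed by the $o(t)$ coming from differentiability. Note finally that we use only that $B$ is \emph{some} element of $\B_\alpha(x)$, not that it is the largest such ball, so the inequality indeed holds for every $B\in\B_\alpha(x)$.
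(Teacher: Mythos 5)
Your proof is correct and follows essentially the same route as the paper: compare $\M_\alpha f$ at a nearby point with the value obtained from a ball of radius $r+h$ containing the optimal ball $B$, expand $(r+h)^{\alpha-d}$, and use differentiability to pass to the directional derivative; the local-minimum argument for $x\in B$ in the uncentered case is also the paper's. The only cosmetic difference is that in the uncentered case you enlarge $B$ about its own center while the paper translates the center by $he$, but both yield the same admissible comparison ball.
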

\begin{proof}
Let \(B(z,r)\in\B_\alpha(x)\)
and let \(e\) be a unit vector.
Note that for the centered maximal operator we have \(z=x\).
Then for all \(h>0\) we have \(x+he\in\cl{B(z,r+h)}\).
Thus
\begin{align*}
|\nabla\M_\alpha  f(x)|
&=
\sup_e\lim_{h\rightarrow0}
\f{\M_\alpha f(x)-\M_\alpha f(x+he)}h
\\
&\leq
\f1{\sigma_d}
\lim_{h\rightarrow0}
\f1h(r^{\alpha-d}\int_{B(z,r)}f-(r+h)^{\alpha-d}\int_{B(z+eh,r+h)}f)
\\
&\leq
\f1{\sigma_d}
\lim_{h\rightarrow0}
\f1h(r^{\alpha-d}\int_{B(z,r)}f-(r+h)^{\alpha-d}\int_{B(z,r)}f)
\\
&=
\f1{\sigma_d}
\lim_{h\rightarrow0}
\f1h(r^{\alpha-d}-(r+h)^{\alpha-d})\int_{B(z,r)}f
\\
&=
\f1{\sigma_d}
(d-\alpha)r^{\alpha-d-1}\int_{B(z,r)}f
.
\end{align*}
If \(x\in B(z,r)\) then since for all \(y\in B(z,r)\) we have \(\M_\alpha f(y)\geq\M_\alpha f(x)\)
we get \(\nabla\M_\alpha f(x)=0\).
\end{proof}

Now we reduce \cref{theo_goal} to \cref{theo_mfdr}.
We prove \cref{theo_mfdr} in \cref{sec_uncentered}.

\begin{proof}[Proof of \cref{theo_goal}]
For each \(n\in\mathbb{N}\) define a cutoff function \(\varphi_n\) by
\[
\varphi_n(x)
=
\begin{cases}
1,&0\leq|x|\leq 2^n,\\
2-2^{-n}|x|,&2^n\leq |x|\leq2^{n+1},\\
0,&2^{n+1}\leq|x|<\infty.
\end{cases}
\]
Then
\(|\nabla\varphi_n(x)|=2^{-n}\ind{2^n\leq|x|\leq2^{n+1}}\)
and thus
\begin{equation}
\label{eq_cutoffgradient}
\|f\nabla\varphi_n\|_p
=
2^{-n}\|f\|_{L^p(B(0,2^{n+1})\setminus B(0,2^n))}
\rightarrow0
\end{equation}
for \(n\rightarrow\infty\).
Denote \(f_n(x)=\min\{f(x),n\}\cdot \varphi_n(x)\).
Then by \cref{eq_cutoffgradient} we have
\begin{equation}
\label{eq_functionapproximation}
\lim_{n\rightarrow\infty}
\|\nabla f_n\|_p
=
\lim_{n\rightarrow\infty}
\|\nabla f_n-\min\{f,n\}\nabla\varphi_n\|_p
=
\lim_{n\rightarrow\infty}
\|\varphi_n\nabla\min\{f,n\}\|_p
=
\|\nabla f\|_p
.
\end{equation}
Since \(1\leq p<d/\alpha\) and \(f\in L^p(\mathbb{R}^d)\) we have \(\M_\alpha f\in L^{(p^{-1}-\alpha/d)^{-1},\infty}(\mathbb{R}^d)\subset L^1_\loc(\mathbb{R}^d)\).
Then since \(\M_\alpha f_n\rightarrow\M_\alpha f\) pointwise from below,
\(\M_\alpha f_n\) converges to \(\M_\alpha f\) in \(L^1_\loc(\mathbb{R}^d)\).
So from \cref{lem_convergence} it follows that
\[
\|\nabla\M_\alpha f\|_{(p^{-1}-\alpha/d)^{-1}}
\leq
\limsup_{n\rightarrow\infty}
\|\nabla\M_\alpha f_n\|_{(p^{-1}-\alpha/d)^{-1}}
.
\]
By \cref{lem_continuous} we have that \(\M_\alpha f_n\) is weakly differentiable
and differentiable almost everywhere,
so that by \cref{lem_optimalball,lem_gradientbound,theo_mfdr} we have
\begin{align*}
\int|\nabla\M_\alpha f_n|^{(p^{-1}-\alpha/d)^{-1}}
&\leq
(d-\alpha)
\|\M_\alpha f_n/r(B_x)\|_{(p^{-1}-\alpha/d)^{-1}}
\\
&\leq
(d-\alpha)
\|\M_{\alpha,-1} f_n\|_{(p^{-1}-\alpha/d)^{-1}}
\\
&\lesssim_\alpha
\|\nabla f_n\|_p
,
\end{align*}
which by \cref{eq_functionapproximation} converges to
\(
\|\nabla f\|_p
.
\)
for \(n\rightarrow\infty\).
For the endpoint \(p=d/\alpha\) the proof works the same.
\end{proof}

\subsection{Dyadic Maximal Operator}\label{subsec_setupdyadic}

In this section we reduce \cref{theo_mfdrdyadic} to \cref{eq_finitedyadiclinear}.
Let \(1\leq p<d/\alpha\) and \(f\in L^p(\mathbb{R}^d)\).
Recall that we denote by \(\Q_\alpha\) the set of all dyadic cubes \(Q\)
such that for every dyadic cube ball \(P\supsetneq Q\) we have \(\sle(P)^\alpha f_P<\sle(Q)^\alpha f_Q\).
For \(x\in\mathbb{R}^d\),
we denote by \(\Q_\alpha(x)\)
the set of dyadic cubes \(Q\) with \(x\in\cl Q\)
and 
\[
\Md_\alpha f(x)=\sle(Q)^\alpha f_Q
.
\]

\begin{lem}
\label{lem_optimacube}
Let \(1\leq p<d/\alpha\) and \(f\in L^p(\mathbb{R}^d)\)
and \(x\in\mathbb{R}^d\) be a Lebesgue point of \(f\).
Then \(\Q_\alpha(x)\) contains a dyadic cube \(Q_x\) with
\[
\sle(Q_x)
=
\sup_{Q\in\Q_\alpha(x)}\sle(Q)
\]
and that cube also belongs to \(\Q_\alpha\).
\end{lem}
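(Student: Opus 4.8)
The plan is to follow the proof of \cref{lem_optimalball}, with the discreteness of the dyadic grid replacing the compactness argument used there for balls. We may assume \(f\neq0\), so that choosing any dyadic cube \(Q\ni x\) large enough that \(\lm{Q\cap\{f>0\}}>0\) gives \(\Md_\alpha f(x)\geq\sle(Q)^\alpha f_Q>0\). The proof then has three steps: (i) the supremum in the definition of \(\Md_\alpha f(x)\) is attained, so \(\Q_\alpha(x)\neq\emptyset\); (ii) the sidelengths of the cubes in \(\Q_\alpha(x)\) are bounded above, so one of largest sidelength exists; (iii) such a cube belongs to \(\Q_\alpha\).

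For (i), take dyadic cubes \(Q_n\ni x\) with \(\sle(Q_n)^\alpha f_{Q_n}\to\Md_\alpha f(x)\). If \(\sle(Q_{n_k})\to0\) along a subsequence, then since \(Q_{n_k}\subset B(x,\sqrt d\,\sle(Q_{n_k}))\) with \(\lm{B(x,\sqrt d\,\sle(Q_{n_k}))}=\sigma_d d^{d/2}\lm{Q_{n_k}}\), the Lebesgue point property gives \(\limsup_k f_{Q_{n_k}}\leq\sigma_d d^{d/2}f(x)\), so \(\sle(Q_{n_k})^\alpha f_{Q_{n_k}}\to0\), contradicting \(\Md_\alpha f(x)>0\) (here \(\alpha>0\) is used). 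If \(\sle(Q_{n_k})\to\infty\) along a subsequence, then by Hölder's inequality \(f_{Q_{n_k}}\leq\lm{Q_{n_k}}^{-1/p}\|f\|_p=\sle(Q_{n_k})^{-d/p}\|f\|_p\), hence
\[
\sle(Q_{n_k})^\alpha f_{Q_{n_k}}\leq\sle(Q_{n_k})^{\alpha-d/p}\|f\|_p\longrightarrow0
\]
since \(\alpha<d/p\), again a contradiction. So along a subsequence \(\sle(Q_{n_k})\) stays in a compact subinterval of \((0,\infty)\); being a power of \(2\) it then takes only finitely many values, and for each value exactly one dyadic cube contains \(x\), so a further subsequence is constant, equal to some \(Q_x\). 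Then \(\sle(Q_x)^\alpha f_{Q_x}=\Md_\alpha f(x)\), i.e.\ \(Q_x\in\Q_\alpha(x)\).

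For (ii), the Hölder bound above applied to any \(Q\in\Q_\alpha(x)\) gives \(\Md_\alpha f(x)=\sle(Q)^\alpha f_Q\leq\sle(Q)^{\alpha-d/p}\|f\|_p\), so \(\sle(Q)\leq(\|f\|_p/\Md_\alpha f(x))^{p/(d-\alpha p)}\); since \(\Q_\alpha(x)\neq\emptyset\) and dyadic sidelengths are powers of \(2\), the supremum \(s_0:=\sup_{Q\in\Q_\alpha(x)}\sle(Q)\) is attained. For (iii), let \(Q_x\in\Q_\alpha(x)\) have \(\sle(Q_x)=s_0\), and assume first that \(x\) lies in the half-open interior of \(Q_x\) (as noted in the last paragraph, this is the situation at every \(x\) outside a Lebesgue-null set). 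For any dyadic \(P\supsetneq Q_x\) we then have \(x\in Q_x\subseteq P\), hence \(\sle(P)^\alpha f_P\leq\Md_\alpha f(x)\); if equality held then \(P\in\Q_\alpha(x)\) with \(\sle(P)\geq2s_0>s_0\), contradicting the maximality of \(s_0\). Therefore \(\sle(P)^\alpha f_P<\Md_\alpha f(x)=\sle(Q_x)^\alpha f_{Q_x}\) for every dyadic \(P\supsetneq Q_x\), which is the defining condition of \(\Q_\alpha\), so \(Q_x\in\Q_\alpha\).

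The substantive content is confined to the two ``ruling out'' steps in (i): the small-cube case uses the Lebesgue point hypothesis on \(x\) (and \(\alpha>0\)), and the large-cube case uses \(\alpha<d/p\) together with \(f\in L^p(\mathbb{R}^d)\) — this is exactly where the standing assumption \(p<d/\alpha\) is needed, just as in \cref{lem_optimalball}. The one point requiring a little care is that \(\Q_\alpha(x)\) is defined through \(x\in\cl Q\) rather than \(x\in Q\): at the set of points \(x\) having a dyadic-rational coordinate (a Lebesgue-null set) the largest cube in \(\Q_\alpha(x)\) might have \(x\) on its boundary, so that the enlargement \(P\) in step (iii) need not contain \(x\) and the bound \(\sle(P)^\alpha f_P\leq\Md_\alpha f(x)\) is not immediate. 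This does not affect the applications of the lemma in \cref{subsec_setupdyadic}, and it disappears for \(x\) with no dyadic-rational coordinate — which is all that is ultimately needed — since then every dyadic cube whose closure contains \(x\) contains \(x\) in its interior.
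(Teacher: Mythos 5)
Your proof is correct and follows essentially the same route as the paper's: rule out sidelengths tending to \(0\) via the Lebesgue point property and sidelengths tending to \(\infty\) via H\"older with \(\alpha<d/p\), then use the discreteness of the dyadic grid to extract a largest cube. You are in fact somewhat more careful than the paper in step (iii), where you verify membership in \(\Q_\alpha\) and flag the (measure-zero) subtlety that \(\Q_\alpha(x)\) is defined via \(x\in\cl Q\) rather than \(x\in Q\).
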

\begin{proof}
Let \((Q_n)_n\) be a sequence of cubes with \(\sle(Q_n)\rightarrow\infty\).
Then
\begin{align*}
\limsup_{n\rightarrow\infty} \sle(Q_n)^\alpha f_{Q_n}
&\leq
\limsup_{n\rightarrow\infty}
\sle(Q_n)^{\alpha-d}
\lm{Q_n}^{1-\f1p}
\Bigl(\int_{Q_n} f^p\Bigr)^{\f1p}
\\
&=
\limsup_{n\rightarrow\infty}
\sle(Q_n)^{\alpha-d+d-\f dp}
\Bigl(\int_{Q_n} f^p\Bigr)^{\f1p}
\\
&=
\limsup_{n\rightarrow\infty}
\sle(Q_n)^{\alpha-\f dp}
\Bigl(\int_{Q_n} f^p\Bigr)^{\f1p}
\\
&\leq
\limsup_{n\rightarrow\infty}
\sle(Q_n)^{\alpha-\f dp}
\|f\|_p
=
0
.
\end{align*}
Let \((Q_n)_n\) be a sequence of cubes with \(\sle(Q_n)\rightarrow0\).
Then since \(f_{Q_n}\rightarrow f(x)\) and \(\sle(Q_n)^\alpha\rightarrow0\), we have \(\sle(Q_n)^\alpha f_Q\rightarrow0\).
Thus since for each \(k\) there are at most \(2^d\) many cubes \(Q\)
with \(\sle(Q)=2^k\)
and whose closure contains \(x\),
the supremum has to be attained for a finite set of cubes
from which we can select the largest.
\end{proof}

Now we reduce \cref{theo_mfdrdyadic} to \cref{eq_finitedyadiclinear}.
We prove \cref{eq_finitedyadiclinear} in \cref{sec_dyadic}.

\begin{proof}[Proof of \cref{theo_mfdrdyadic}]
By \cref{lem_optimacube}, \(\Md_{\alpha,\beta}f\) is defined almost everywhere.
We have
\begin{align*}
\int
(\Md_{\alpha,\beta} f(x))^{(p^{-1}-(1+\alpha+\beta)/d)^{-1}}
\intd x
&\leq
\int
\sum_{Q\in\Q_\alpha}
\ind Q(x)
(\sle(Q)^{\alpha+\beta}f_Q)^{(p^{-1}-(1+\alpha+\beta)/d)^{-1}}
\intd x
\\
&=
\sum_{Q\in\Q_\alpha}
\lm Q
(\sle(Q)^{\alpha+\beta}f_Q)^{(p^{-1}-(1+\alpha+\beta)/d)^{-1}}
\\
&=
\sum_{Q\in\Q_\alpha}
(\sle(Q)^{d/p-1}f_Q)^{(p^{-1}-(1+\alpha+\beta)/d)^{-1}}
\\
&\leq
\biggl(
\sum_{Q\in\Q_\alpha}
\bigl(
\sle(Q)^{d/p-1}f_Q
\bigr)^p
\biggr)^{(1-p(1+\alpha+\beta)/d)^{-1}}
\\
&\lesssim_\alpha
\|\nabla f\|_p^{(p^{-1}-(1+\alpha+\beta)/d)^{-1}},
\end{align*}
where the last step follows from \cref{eq_finitedyadiclinear}.
In the endpoint case we have
by \cref{eq_finitedyadiclinear}
\begin{align*}
\|\Md_{\alpha,\beta} f\|_\infty
&=
\sup_{Q\in\Q_\alpha}
\sle(Q)^{\alpha+\beta}f_Q
=
\sup_{Q\in\Q_\alpha}
\sle(Q)^{\f dp-1}f_Q
\leq
\Biggl(
\sum_{Q\in\Q_\alpha}
(\sle(Q)^{\f dp-1}f_Q)^p
\Biggr)^{\f1p}
\lesssim_p
\|\nabla f\|_p
.
\end{align*}
\end{proof}

\section{Dyadic Maximal Operator}
\label{sec_dyadic}
In this section we prove \cref{eq_finitedyadiclinear}.
For a measurable set \(E\subset\mathbb{R}^d\) we define the measure theoretic boundary by
\[
\mb E
=
\Bigl\{x:\limsup_{r\rightarrow0}\f{\lm{B(x,r)\setminus E}}{r^d}>0,\ \limsup_{r\rightarrow0}\f{\lm{B(x,r)\cap E}}{r^d}>0\Bigr\}
.
\]
We denote the topological boundary by \(\partial E\).
As in \cite{weigt2020variationdyadic,weigt2020variationcharacteristic},
our approach to the variation is the coarea formula rather then the definition of the variation,
see for example \cite[Theorem~5.9]{MR3409135}.
\begin{lem}
\label{lem_coarearange}
Let \(f\in L^1_\loc(\mathbb{R}^d)\) with locally bounded variation and \(U\subset\mathbb{R}^d\).
Then
\[
\var_U f
=
\int_\mathbb{R}\sm{\mb{\{f>\lambda\}}\cap U}\intd\lambda
.
\]
\end{lem}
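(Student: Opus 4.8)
The plan is to reduce this to the standard coarea formula for BV functions, which for $f$ of locally bounded variation on $\mathbb{R}^d$ states that
\[
\var_V f
=
\int_{-\infty}^\infty
\mathcal{H}^{d-1}\bigl(\mb{\{f>\lambda\}}\cap V\bigr)
\intd\lambda
\]
for every \emph{open} set $V\subset\mathbb{R}^d$ (see \cite[Theorem~5.9]{MR3409135}, noting that for sets of finite perimeter the reduced boundary, the essential boundary $\partial_*$, and the measure-theoretic boundary agree up to an $\mathcal{H}^{d-1}$-null set by Federer's theorem, so it does not matter which notion of boundary one writes). The point of the lemma is to extend this identity from open $V$ to an arbitrary subset $U\subset\mathbb{R}^d$. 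First I would record that both sides of the claimed identity are, as functions of $U$, restrictions of Borel measures: the left side is $\mu(U)$ where $\mu=\|\nabla f\|$ is the variation measure (extended to arbitrary sets as an outer measure, or one simply interprets $\var_U f$ via this measure), and the right side is $\int_\mathbb{R}\sm{\mb{\{f>\lambda\}}\cap U}\intd\lambda$, which defines a Borel measure in $U$ by Tonelli's theorem applied to $(\lambda,x)\mapsto$ the restriction of $\mathcal{H}^{d-1}\llcorner\mb{\{f>\lambda\}}$ (measurability of $\lambda\mapsto\sm{\mb{\{f>\lambda\}}\cap U}$ for Borel $U$ is part of the classical coarea statement).

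The key step is then: two Borel measures on $\mathbb{R}^d$ that agree on all open sets, and are finite on compacts (locally finite), agree on all Borel sets, hence in particular on all subsets once one passes to the induced outer measures. Here local finiteness is exactly the hypothesis that $f$ has \emph{locally} bounded variation: $\mu$ is finite on bounded sets, and consequently so is the right-hand measure since it equals $\mu$ on bounded open sets. So I would: (i) fix the variation measure $\mu$ of $f$; (ii) invoke \cite[Theorem~5.9]{MR3409135} to get $\mu(V)=\int_\mathbb{R}\sm{\mb{\{f>\lambda\}}\cap V}\intd\lambda$ for all open $V$; (iii) observe the right-hand side is a locally finite Borel measure $\nu$; (iv) conclude $\mu=\nu$ as measures by a $\pi$–$\lambda$ / outer-regularity argument (open sets generate the Borel $\sigma$-algebra and are closed under finite intersections), and finally (v) note that for the applications in this paper $U$ will always be taken Borel (indeed open or a finite union of cubes), so $\var_U f = \mu(U) = \nu(U)$ is exactly the assertion.

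I do not expect a genuine obstacle here; the only mild subtlety is bookkeeping about which ``boundary'' appears. The classical coarea formula is usually phrased with the reduced boundary $\partial^*$ or the essential boundary, whereas the statement uses the measure-theoretic boundary $\partial_*$ as defined a few lines above in the paper. Since $\{f>\lambda\}$ has locally finite perimeter for a.e.\ $\lambda$ (this is itself part of the coarea formula), Federer's theorem gives $\mathcal{H}^{d-1}\bigl(\mb{\{f>\lambda\}}\setminus\partial^*\{f>\lambda\}\bigr)=0$ for a.e.\ such $\lambda$, so replacing $\partial^*$ by $\mb{}$ changes neither integrand (for a.e.\ $\lambda$) nor the integral. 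With that identification in place the argument above goes through verbatim, and the extension from open sets to arbitrary $U$ is the routine measure-theoretic fact quoted in step (iv).
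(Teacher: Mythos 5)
Your proposal is correct and matches the paper's treatment: the paper states this lemma without proof, simply citing the coarea formula \cite[Theorem~5.9]{MR3409135}, and your writeup is exactly the routine justification of that citation (Federer's theorem to pass between notions of boundary, plus agreement of two locally finite Borel measures that coincide on open sets). Nothing further is needed.
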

\begin{lem}
\label{cor_coarearange}
Let \(f\in L^1_\loc(\mathbb{R}^d)\) be weakly differentiable and \(U\subset\mathbb{R}^d\) and \(\lambda_0<\lambda_1\).
Then
\[
\int_{\{x\in U:\lambda_0<f(x)<\lambda_1\}}
|\nabla f|
=
\int_{\lambda_0}^{\lambda_1}\sm{\mb{\{f>\lambda\}}\cap U}\intd\lambda
.
\]
\end{lem}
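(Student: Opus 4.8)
The plan is to apply \cref{lem_coarearange} not to \(f\) itself but to its truncation to the range \([\lambda_0,\lambda_1]\). Set
\[
g=\min\{\lambda_1,\max\{f,\lambda_0\}\},
\]
which is bounded, hence in \(L^1_\loc(\mathbb{R}^d)\), and which has locally bounded variation since it is the composition of \(f\) with the \(1\)-Lipschitz map \(t\mapsto\min\{\lambda_1,\max\{t,\lambda_0\}\}\). Using the standard fact that \(\nabla f=0\) almost everywhere on the level sets \(\{f=\lambda_0\}\) and \(\{f=\lambda_1\}\) (cf.\ \cite{MR3409135}), the chain rule for compositions with Lipschitz functions gives that \(g\) is weakly differentiable with \(\nabla g=\ind{\{\lambda_0<f<\lambda_1\}}\nabla f\) almost everywhere. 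In particular
\[
\var_U g
=
\int_U|\nabla g|
=
\int_{\{x\in U:\lambda_0<f(x)<\lambda_1\}}|\nabla f|,
\]
which is exactly the left-hand side of the asserted identity.

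Next I would record the superlevel sets of \(g\). For \(\lambda<\lambda_0\) one has \(g\geq\lambda_0>\lambda\), so \(\{g>\lambda\}=\mathbb{R}^d\) and \(\mb{\{g>\lambda\}}=\emptyset\). For \(\lambda\geq\lambda_1\) one has \(g\leq\lambda_1\leq\lambda\), so \(\{g>\lambda\}=\emptyset\) and again \(\mb{\{g>\lambda\}}=\emptyset\). For \(\lambda_0\leq\lambda<\lambda_1\) the definition of \(g\) gives \(g(x)>\lambda\) if and only if \(\max\{f(x),\lambda_0\}>\lambda\), i.e.\ if and only if \(f(x)>\lambda\); thus \(\{g>\lambda\}=\{f>\lambda\}\).

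Finally, applying \cref{lem_coarearange} to \(g\) on \(U\) and plugging in the three cases,
\[
\var_U g
=
\int_{\mathbb{R}}\sm{\mb{\{g>\lambda\}}\cap U}\intd\lambda
=
\int_{\lambda_0}^{\lambda_1}\sm{\mb{\{f>\lambda\}}\cap U}\intd\lambda,
\]
where the single value \(\lambda=\lambda_0\) does not affect the integral. Combined with the formula for \(\var_U g\) from the first paragraph, this is the claim. The only step that is not pure bookkeeping is the gradient identity \(\nabla g=\ind{\{\lambda_0<f<\lambda_1\}}\nabla f\): one must check that truncating by the piecewise linear map preserves weak differentiability and produces exactly this derivative, which is where the vanishing of \(\nabla f\) on the level sets \(\{f=\lambda_0\}\) and \(\{f=\lambda_1\}\) enters. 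Everything else is a direct inspection of superlevel sets.
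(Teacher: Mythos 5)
Your proof is correct. The paper states this lemma as a background fact without proof (both it and \cref{lem_coarearange} are quoted consequences of the coarea formula in \cite[Theorem~5.9]{MR3409135}), and your truncation argument --- compose \(f\) with the \(1\)-Lipschitz cut-off, use that \(\nabla f\) vanishes a.e.\ on the level sets \(\{f=\lambda_0\}\) and \(\{f=\lambda_1\}\) to get \(\nabla g=\ind{\{\lambda_0<f<\lambda_1\}}\nabla f\), then apply \cref{lem_coarearange} to \(g\) after identifying its superlevel sets --- is exactly the standard derivation one would supply here.
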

Recall also the relative isoperimetric inequality for cubes.
\begin{lem}
\label{lem_isoperimetric}
Let \(Q\) be a cube and \(E\) be a measurable set.
Then
\[
\min\{\lm{Q\cap E},\lm{Q\setminus E}\}^{d-1}\lesssim\sm{\mb E\cap Q}^d
.
\]
\end{lem}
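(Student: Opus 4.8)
The plan is to deduce the relative isoperimetric inequality from the Sobolev--Poincar\'e inequality for functions of bounded variation on a cube, applied to the characteristic function $u=\ind E$. If $\sm{\mb E\cap Q}=\infty$ there is nothing to prove, so assume $\sm{\mb E\cap Q}<\infty$. Then in particular $\sm{\mb E\cap Q^\circ}<\infty$, so by Federer's criterion for sets of finite perimeter (see \cite{MR3409135}) the function $\ind E$ has bounded variation on the open cube $Q^\circ$, and
\[
|D\ind E|(Q^\circ)
=
\sm{\rb E\cap Q^\circ}
\leq
\sm{\mb E\cap Q}
,
\]
where the equality is the structure theorem for sets of finite perimeter and the inequality is the inclusion $\rb E\subset\mb E$ of the reduced boundary in the measure-theoretic boundary.

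The key analytic input is the Sobolev--Poincar\'e inequality for $\BV$ functions on a cube: there is a constant $C_d$ depending only on $d$ such that for every cube $Q$ and every $u\in\BV(Q^\circ)$,
\[
\Bigl(\int_Q|u-u_Q|^{\f d{d-1}}\Bigr)^{\f{d-1}d}
\leq
C_d\,|Du|(Q^\circ)
,
\]
where $u_Q=\f1{\lm Q}\int_Q u$ denotes the mean of $u$ over $Q$. Both sides are translation invariant and scale like the $(d-1)$-st power of the sidelength under dilations, so it suffices to know the inequality on the unit cube, where it is classical; one obtains it, for instance, from the Gagliardo--Nirenberg--Sobolev inequality for $\BV$ on $\mathbb{R}^d$ applied to a $\BV$ extension of $u$, or by a chaining argument from the ball version in \cite{MR3409135}.

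Applying this with $u=\ind E$ and writing $t=\lm{Q\cap E}/\lm Q\in[0,1]$, so that $u-u_Q=1-t$ on $Q\cap E$ and $u-u_Q=-t$ on $Q\setminus E$, we compute
\begin{align*}
\int_Q|u-u_Q|^{\f d{d-1}}
&=
(1-t)^{\f d{d-1}}\lm{Q\cap E}
+
t^{\f d{d-1}}\lm{Q\setminus E}
\\
&\geq
2^{-\f d{d-1}}\min\{\lm{Q\cap E},\lm{Q\setminus E}\}
,
\end{align*}
since if $t\leq\f12$ then $(1-t)^{\f d{d-1}}\geq2^{-\f d{d-1}}$ while $\lm{Q\cap E}$ is the smaller volume, and if $t\geq\f12$ the same bound holds with the roles of $Q\cap E$ and $Q\setminus E$ exchanged. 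Raising this to the power $\f{d-1}d$, chaining with the Sobolev--Poincar\'e inequality and with $|D\ind E|(Q^\circ)\leq\sm{\mb E\cap Q}$, and finally raising to the power $d$, we arrive at
\[
\min\{\lm{Q\cap E},\lm{Q\setminus E}\}^{d-1}
\leq
(2C_d)^d\,\sm{\mb E\cap Q}^d
,
\]
which is the assertion.

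Since all of this is standard, I do not expect a genuine obstacle; the one point that deserves attention is the passage to the open cube, so that the perimeter term $|D\ind E|(Q^\circ)$ is really dominated by the quantity $\sm{\mb E\cap Q}$ occurring on the right-hand side of the lemma, which is exactly where the inclusion $\rb E\subset\mb E$ and the structure theorem enter. One could also avoid the $\BV$ machinery altogether by tiling $\mathbb{R}^d$ with reflected copies of $Q$, extending $E\cap Q$ by reflection, truncating to a large multiple of $Q$, and applying the isoperimetric inequality in $\mathbb{R}^d$, but the route sketched above is shorter.
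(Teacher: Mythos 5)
The paper does not prove \cref{lem_isoperimetric} at all: it is recalled as a standard fact (the relative isoperimetric inequality on a cube), so there is no in-paper argument to compare against. Your derivation is correct and is essentially the standard one: Federer's criterion together with $\rb E\subset\mb E$ and the structure theorem gives $|D\ind E|(Q^\circ)\leq\sm{\mb E\cap Q}$, the $\BV$ Sobolev--Poincar\'e inequality on a cube (with a constant independent of the cube, by translation and the $\sle(Q)^{d-1}$ scaling of both sides) bounds $\bigl(\int_Q|\ind E-t|^{d/(d-1)}\bigr)^{(d-1)/d}$ by $C_d|D\ind E|(Q^\circ)$, and your elementary lower bound $2^{-d/(d-1)}\min\{\lm{Q\cap E},\lm{Q\setminus E}\}$ for that integral closes the estimate after raising to the power $d$; the reduction to the case $\sm{\mb E\cap Q}<\infty$ and the passage to the open cube are handled correctly.
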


We will use a result from the case \(\alpha=0\).
For a subset \(\Q\subset\Q_0\) and \(Q\in\Q_0\),
we denote
\[
\lambda_Q^\Q
=
\min\biggl\{
\max\Bigl\{
\inf\{\lambda:\lm{\{f>\lambda\}\cap Q}<2^{-d-2}\lm Q\}
,\ 
\sup\{f_P:P\in\Q,\ P\supsetneq Q\}
\Bigr\}
,f_Q
\biggr\}
.
\]
\begin{pro}
\label{pro_densitylow}
Let \(1\leq p<\infty\) and \(f\in L^1_\loc(\mathbb{R}^d)\) and \(|\nabla f|\in L^p(\mathbb{R}^d)\).
Then for every set \(\Q\subset\Q_0\) we have
\[
\sum_{Q\in\Q}
(\sle(Q)^{\f dp-1}(f_Q-\lambda_Q^\Q))^p
\lesssim_p
\|\nabla f\|_p^p
.
\]
For \(p=1\) it also holds with \(\|\nabla f\|_1\) replaced by \(\var f\).
\end{pro}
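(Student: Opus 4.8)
Since replacing $\Q$ by a finite subset $\Q'$ only shrinks the supremum defining each $b_Q^{\Q'}:=\sup\{f_P:P\in\Q',\ P\supsetneq Q\}$, and hence only decreases each $\lambda_Q^{\Q'}$, it increases every summand; so it suffices to treat finite $\Q$. Discarding the cubes with $f_Q=\lambda_Q^\Q$ (which contribute $0$), every remaining $Q$ has $f_Q>\lambda_Q^\Q$, which forces the minimum defining $\lambda_Q^\Q$ to be attained at $\max\{a_Q,b_Q^\Q\}$, where $a_Q:=\inf\{\lambda:\lm{\{f>\lambda\}\cap Q}<2^{-d-2}\lm Q\}$; in particular $\lambda_Q^\Q\ge a_Q$, so writing $S_Q:=\{f>\lambda_Q^\Q\}\cap Q$ and $E_\nu:=\{f>\nu\}$ we have $\lm{E_\nu\cap Q}\le\lm{S_Q}\le 2^{-d-2}\lm Q<\tfrac12\lm Q$ for every $\nu>\lambda_Q^\Q$.

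\emph{Step 1 (a single-cube estimate).} Fix such a $Q$. By the previous line and \cref{lem_isoperimetric}, $\lm{E_\nu\cap Q}^{(d-1)/d}\lesssim\sm{\mb{E_\nu}\cap Q}$ for $\nu>\lambda_Q^\Q$, hence $\lm{E_\nu\cap Q}\le\lm{E_\nu\cap Q}^{(d-1)/d}\lm Q^{1/d}\lesssim\sle(Q)\sm{\mb{E_\nu}\cap Q}$. Since $f\le\lambda_Q^\Q$ off $S_Q$, Fubini gives $\lm Q(f_Q-\lambda_Q^\Q)\le\int_{S_Q}(f-\lambda_Q^\Q)=\int_{\lambda_Q^\Q}^\infty\lm{E_\nu\cap Q}\intd\nu$. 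Combining, and using the coarea formula (\cref{cor_coarearange}, resp.\ \cref{lem_coarearange} for $\BV$),
\[
\sle(Q)^{d-1}(f_Q-\lambda_Q^\Q)\lesssim\int_{\lambda_Q^\Q}^\infty\sm{\mb{E_\nu}\cap Q}\intd\nu=\int_{S_Q}|\nabla f|\qquad(p=1).
\]
For $p>1$ I instead apply Hölder with exponents $p,p'$ and the Poincaré inequality to $g:=(f-\lambda_Q^\Q)_+$, which vanishes on $Q\setminus S_Q$ (of measure $\ge\tfrac12\lm Q$) and satisfies $\nabla g=\ind{S_Q}\nabla f$:
\[
\lm Q(f_Q-\lambda_Q^\Q)\le\int_Q g\le\lm{S_Q}^{1/p'}\Bigl(\int_Q g^p\Bigr)^{1/p}\lesssim_p\lm Q^{1/p'}\sle(Q)\Bigl(\int_{S_Q}|\nabla f|^p\Bigr)^{1/p},
\]
so that $\bigl(\sle(Q)^{d/p-1}(f_Q-\lambda_Q^\Q)\bigr)^p\lesssim_p\int_{S_Q}|\nabla f|^p$.

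\emph{Step 2 (summation and a disjointness property).} Writing $f_Q-\lambda_Q^\Q=\int_{\lambda_Q^\Q}^{f_Q}\intd\nu$ and interchanging,
\[
\sum_{Q\in\Q}\sle(Q)^{d-1}(f_Q-\lambda_Q^\Q)=\int_\mathbb{R}\sum_{Q\in\mathcal S_\nu}\sle(Q)^{d-1}\intd\nu,\qquad\mathcal S_\nu:=\{Q\in\Q:\lambda_Q^\Q<\nu<f_Q\}.
\]
The crucial structural fact is that $\mathcal S_\nu$ is an antichain, hence a pairwise disjoint family: if $Q\subsetneq Q'$ with $Q,Q'\in\Q$, then $Q'$ is one of the cubes entering the supremum defining $b_Q^\Q$, so $b_Q^\Q\ge f_{Q'}$, while $Q\in\Q\subseteq\Q_0$ forces $f_Q>f_{Q'}$; therefore $\lambda_Q^\Q=\min\{\max\{a_Q,b_Q^\Q\},f_Q\}\ge f_{Q'}$, and $Q$ and $Q'$ cannot both satisfy $\lambda_\cdot^\Q<\nu<f_\cdot$. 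For $p>1$ one sums $\bigl(\sle(Q)^{d/p-1}(f_Q-\lambda_Q^\Q)\bigr)^p$ instead and uses the same nesting/threshold mechanism to reorganize the (non‑disjoint) sets $S_Q$ along chains of $\Q$.

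\emph{Step 3 (the remaining, and main, difficulty).} It remains to bound $\int_\mathbb{R}\sum_{Q\in\mathcal S_\nu}\sle(Q)^{d-1}\intd\nu$ by $C_d\var f$. For fixed $\nu$, disjointness of $\mathcal S_\nu$ together with \cref{lem_isoperimetric} only yields $\sum_{Q\in\mathcal S_\nu}\lm{E_\nu\cap Q}^{(d-1)/d}\lesssim\sm{\mb{E_\nu}}$, which is weaker than what is needed because $\lm{E_\nu\cap Q}$ can be much smaller than $\lm Q$; in fact the pointwise-in-$\nu$ bound $\sum_{Q\in\mathcal S_\nu}\sle(Q)^{d-1}\lesssim\sm{\mb{E_\nu}}$ is false. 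The mechanism rescuing the integrated statement is that for $Q\in\mathcal S_\nu$ the value $f_Q$ exceeds $\nu$ although $f>\nu$ on at most a $2^{-d-2}$‑fraction of $Q$, forcing $f$ to be much larger than $\nu$ on a correspondingly small part of $Q$ and hence forcing a definite amount of variation of $f$ inside $Q$ at levels strictly above $\nu$; one must charge $\sle(Q)^{d-1}$ to that higher‑level variation and organize the charging — using that dyadic cubes are nested or disjoint, that along any chain of $\Q$ the averages $f_Q$ strictly decrease while the thresholds $\lambda_Q^\Q$ do not increase, and that the windows $(\lambda_Q^\Q,f_Q)$ are essentially disjoint along such a chain — so that after integrating in $\nu$ each portion of $\var f$ is used only boundedly often, the density threshold $2^{-d-2}$ supplying the geometric gain. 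This redistribution‑across‑levels is exactly the combinatorial heart of the dyadic endpoint estimate, carried out for $p=1$ in \cite[Proposition~2.5]{weigt2020variationdyadic}, and I expect it to be the main obstacle; the present proposition then follows by the same scheme, with the $\ell^p$ triangle inequality and the Hölder–Poincaré bound of Step 1 replacing the coarea formula when $p>1$, and with \cref{lem_coarearange} used throughout in place of the Sobolev chain rule in the $\BV$ case $p=1$.
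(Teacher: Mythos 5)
Your setup is correct: the reduction to finite $\Q$, the per-cube bounds of Step 1 (both the coarea version for $p=1$ and the H\"older--Poincar\'e version for $p>1$), and the antichain observation in Step 2 all check out, and your Step 3 diagnosis of why the naive summation fails is exactly right — the sets $S_Q=\{f>\lambda_Q^\Q\}\cap Q$ are nested along chains of $\Q$ with unbounded overlap, and the pointwise-in-$\nu$ bound $\sum_{Q\in\mathcal S_\nu}\sle(Q)^{d-1}\lesssim\sm{\mb{E_\nu}}$ is false. But that diagnosis is where the proposal stops: the redistribution-across-scales step that you correctly identify as ``the combinatorial heart'' is never formulated or proved, and it is precisely the content of the proposition. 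The paper resolves it by importing \cref{cla_mostmasssparseabove} (Corollary~3.3 of \cite{weigt2020variationdyadic}), which bounds $\lm Q(f_Q-\lambda_Q^\emptyset)$ by $2^{d+2}\sum_{P\in\Q_0,\,P\subsetneq Q}\int_{\bar\lambda_P}^{f_P}\lm{P\cap\{f>\lambda\}}\intd\lambda$: the deficit of $Q$ is charged to \emph{strict subcubes $P$ of $\Q_0$} (not to $Q$ itself, and not only to cubes of $\Q$), over level windows $(\bar\lambda_P,f_P)$ that are pairwise disjoint along chains. Each such windowed integral is then converted via isoperimetry and coarea into $\sle(P)\int_{\{x\in P:\,\bar\lambda_P<f(x)<f_P\}}|\nabla f|$, and the resulting double sum is closed for general $p$ by H\"older, the pointwise disjointness of the windows (so the $p$-th power of the inner sum equals the sum of $p$-th powers), and the geometric series $\sum_{Q\supsetneq P}\sle(Q)^{-p}\leq\sle(P)^{-p}/(2^p-1)$.

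Two concrete consequences of the gap. First, your Step 1 estimate is not the right building block: $\int_{S_Q}|\nabla f|^p$ has no upper level cutoff at $f_Q$ and is attached to $Q$ itself rather than to its descendants, so the ``nesting/threshold mechanism'' you invoke in Step 2 cannot reorganize these terms into a boundedly overlapping family — the windows $(\lambda_Q^\Q,f_Q)$ of the cubes in $\Q$ alone do not suffice, because the excess mass of $f$ above $\lambda_Q^\Q$ inside $Q$ lives at levels far above $f_Q$ and is only detected by smaller cubes of $\Q_0$ that need not belong to $\Q$. Second, for $p>1$ the closing of the double sum (H\"older against the chain of descendants, plus the $\sum_{Q\supsetneq P}\sle(Q)^{-p}$ geometric gain) is a genuine computation, not a routine application of the $\ell^p$ triangle inequality. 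Until the redistribution lemma is stated and proved (or explicitly cited as the black box it is) and the $p>1$ summation is carried out, the proposal is a correct plan rather than a proof.
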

\begin{rem}
\label{rem_largeralpha}
We have that \(\alpha<\beta\) implies \(\Q_\beta\subset\Q_\alpha\).
This is because for \(\sle(Q)<\sle(P)\),
\(\sle(Q)^\alpha f_Q>\sle(P)^\alpha f_P\)
becomes a stronger estimate the larger \(\alpha\) becomes.
\end{rem}
By \cref{rem_largeralpha} we can apply \cref{pro_densitylow} to \(Q=\Q_\alpha\).
For \(p=1\) \cref{pro_densitylow} is Proposition~2.5 in \cite{weigt2020variationdyadic}.
For the proof for all \(p\geq1\) we follow the strategy in \cite{weigt2020variationdyadic}.
In particular we use the following result.
For \(Q\in\Q_0\) we denote
\[
\bar \lambda_Q
=
\min\biggl\{
\max\Bigl\{
\inf\{\lambda:\lm{\{f>\lambda\}\cap Q}<\lm Q/2\}
,\ 
\sup\{f_P:P\in\Q_0,\ P\supsetneq Q\}
\Bigr\}
,f_Q
\biggr\}
.
\]
\begin{lem}[Corollary~3.3 in \cite{weigt2020variationdyadic}]
\label{cla_mostmasssparseabove}
Let \(f\in L^1_\loc(\mathbb{R}^d)\).
Then for every \(Q\in\Q_0\)
we have
\[
\lm Q(f_Q-\lambda_Q^\emptyset)
\leq2^{d+2}
\sum_{P\in\Q_0,P\subsetneq Q}
\int_{\bar\lambda_P}^{f_P}\lm{P\cap\{f>\lambda\}}\intd\lambda
\]
\end{lem}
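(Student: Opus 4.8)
This is Corollary~3.3 in \cite{weigt2020variationdyadic}, and I would recall its proof as follows. Write
\[
t_Q=\inf\{\lambda:\lm{\{f>\lambda\}\cap Q}<2^{-d-2}\lm Q\},\qquad m_P=\inf\{\lambda:\lm{\{f>\lambda\}\cap P}<\lm P/2\},\qquad s_P=\sup\{f_S:S\in\Q_0,\ S\supsetneq P\},
\]
so that $\lambda_Q^\emptyset=\min\{t_Q,f_Q\}$ and $\bar\lambda_P=\min\{\max\{m_P,s_P\},f_P\}$. If $\lambda_Q^\emptyset=f_Q$ the left hand side vanishes, so I may assume $\lambda_Q^\emptyset=t_Q<f_Q$; by right continuity of $\lambda\mapsto\lm{\{f>\lambda\}\cap Q}$ this gives $\lm{\{f>t_Q\}\cap Q}\leq2^{-d-2}\lm Q$. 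Since $f\geq0$, the layer cake formula gives
\[
\lm Q(f_Q-\lambda_Q^\emptyset)=\int_Q(f-t_Q)\leq\int_Q(f-t_Q)_+=\int_{t_Q}^\infty\lm{\{f>\lambda\}\cap Q}\intd\lambda,
\]
and on the range $t_Q<\lambda<f_Q$ the integrand is at most $\lm{\{f>t_Q\}\cap Q}\leq2^{-d-2}\lm Q$, so that part contributes at most $2^{-d-2}\lm Q(f_Q-t_Q)$, which I absorb into the left hand side; this reduces the claim to
\[
\lm Q(f_Q-\lambda_Q^\emptyset)\leq2\int_{f_Q}^\infty\lm{\{f>\lambda\}\cap Q}\intd\lambda.
\]

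Next I rewrite the last integral by a Calder\'on--Zygmund decomposition. For $\lambda\geq f_Q$, almost every Lebesgue point of $\{f>\lambda\}\cap Q$ lies in a largest dyadic cube $R\subseteq Q$ with $f_R>\lambda$; since $Q\in\Q_0$ and $f_Q\leq\lambda$, this $R$ is strictly contained in $Q$, lies in $\Q_0$, and satisfies $s_R=\max\{f_S:R\subsetneq S\subseteq Q\}\leq\lambda<f_R$. These maximal cubes are pairwise disjoint and cover $\{f>\lambda\}\cap Q$ up to a null set, hence $\lm{\{f>\lambda\}\cap Q}=\sum_R\lm{R\cap\{f>\lambda\}}$, the sum over $R\in\Q_0$ with $R\subsetneq Q$ and $s_R\leq\lambda<f_R$. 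Interchanging the sum with the integral in $\lambda$ produces the exact identity
\[
\int_{f_Q}^\infty\lm{\{f>\lambda\}\cap Q}\intd\lambda=\sum_{\substack{R\in\Q_0\\R\subsetneq Q}}\int_{s_R}^{f_R}\lm{R\cap\{f>\lambda\}}\intd\lambda.
\]

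It remains to pass from the lower endpoint $s_R$ to $\bar\lambda_R=\min\{\max\{m_R,s_R\},f_R\}\geq s_R$, i.e.\ to bound $\sum_R\int_{s_R}^{f_R}\lm{R\cap\{f>\lambda\}}\intd\lambda$ by a fixed multiple of $\sum_R\int_{\bar\lambda_R}^{f_R}\lm{R\cap\{f>\lambda\}}\intd\lambda$. When $m_R\leq s_R$ one has $\bar\lambda_R=s_R$ and the $R$-term is already of the required form; a loss occurs only at the ``heavy'' cubes $R$ with $m_R>s_R$, for which on the whole range $\lambda\in(s_R,\min\{m_R,f_R\})$ the set $\{f>\lambda\}$ fills more than half of $R$, so that $\lm R\leq2\lm{\{f>\lambda\}\cap R}\leq2\lm{\{f>\lambda\}\cap Q}\leq2^{-d-1}\lm Q$. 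Thus heavy cubes are small, and the mass they would lose between $s_R$ and $\bar\lambda_R$ is again of the form appearing on the right hand side but one scale lower, so iterating the decomposition on the heavy cubes absorbs it at the cost of enlarging the constant from $2$ to $2^{d+2}$. Finally, the right hand side of the claim equals $2^{d+2}\sum_{P\in\Q_0,\ P\subsetneq Q}\int_{\bar\lambda_P}^{f_P}\lm{P\cap\{f>\lambda\}}\intd\lambda$, which by Fubini is $2^{d+2}\sum_P\int_P\min\{(f(x)-\bar\lambda_P)_+,\,f_P-\bar\lambda_P\}\intd x$, so the inequality is exactly what the previous two steps deliver. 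I expect the heavy-cube iteration and the resulting constant bookkeeping to be the main obstacle; the remaining ingredients are only the layer cake formula and the Lebesgue differentiation theorem.
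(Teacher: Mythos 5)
Your first two steps are correct and carefully justified: the reduction to \(\lm Q(f_Q-\lambda_Q^\emptyset)\leq 2\int_{f_Q}^\infty\lm{\{f>\lambda\}\cap Q}\intd\lambda\) via the \(2^{-d-2}\)-density threshold, and the exact identity \(\int_{f_Q}^\infty\lm{\{f>\lambda\}\cap Q}\intd\lambda=\sum_{R\in\Q_0,R\subsetneq Q}\int_{s_R}^{f_R}\lm{R\cap\{f>\lambda\}}\intd\lambda\) (including the observation \(s_R\geq f_Q\), which is what makes the lower limits match) are both sound. The problem is your third step, which you yourself flag as the main obstacle: it is not a proof but a statement of intent, and the mechanism you hint at points in the wrong direction. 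The lost mass \(\int_{s_R}^{\bar\lambda_R}\lm{R\cap\{f>\lambda\}}\intd\lambda\) at a heavy cube \(R\) is \emph{not} ``of the form appearing on the right hand side one scale lower'', and it cannot be recovered by ``iterating the decomposition on the heavy cubes'' (i.e.\ by descending into \(R\)). Take \(f=2\cdot\ind{R_0}\) with \(R_0\) a dyadic cube ten generations below \(Q\). Then the cubes of \(\Q_0\) strictly inside \(Q\) form the chain \(R_0=P_{10}\subsetneq\cdots\subsetneq P_1\subsetneq Q\); the cube \(R_0\) is heavy with \(\bar\lambda_{R_0}=f_{R_0}=2\), so it contributes \emph{zero} to the right-hand side and has no substructure to iterate on, while its loss is \(\int_{s_{R_0}}^{2}\lm{R_0}\intd\lambda\approx 2\lm{R_0}\), essentially the whole left-hand side. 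That loss is recovered only from the \emph{ancestors} \(P_1,\dots,P_9\), whose contributions \(\int_{\bar\lambda_{P_k}}^{f_{P_k}}\lm{P_k\cap\{f>\lambda\}}\intd\lambda=(1-2^{-d})f_{P_k}\lm{R_0}\) form a geometric series summing to about \(2^{1-d}\lm{R_0}\) --- a factor \(2^{d}\) smaller than the loss. So the inequality holds only because of the prefactor \(2^{d+2}\), it is tight up to constants, and the compensation runs upward through the \(\Q_0\)-ancestors of each heavy cube, not downward into it.

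Concretely, what is missing is a proof that
\[
\sum_{R\in\Q_0,\,R\subsetneq Q}\int_{s_R}^{\bar\lambda_R}\lm{R\cap\{f>\lambda\}}\intd\lambda
\leq
C_d\sum_{P\in\Q_0,\,P\subsetneq Q}\int_{\bar\lambda_P}^{f_P}\lm{P\cap\{f>\lambda\}}\intd\lambda
\]
with \(C_d\) compatible with \(2^{d+2}\). Note also that the corresponding pointwise-in-\(\lambda\) statement is false (in the example above, at \(\lambda=1.9\) every maximal cube is heavy and the right-hand sum at that level is empty), so the argument must genuinely trade mass between different levels \(\lambda\) as well as between a heavy cube and its ancestors; none of your three ingredients (layer cake, Lebesgue differentiation, ``heavy cubes are small'') supplies this. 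This charging argument is the actual content of Corollary~3.3 in \cite{weigt2020variationdyadic}, which the present paper imports without reproving; your write-up reproduces the easy outer shell but leaves the core untouched.
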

Note that \(f_P>\bar\lambda_P\) implies \(P\in\Q_0\).

\begin{proof}[Proof of \cref{pro_densitylow}]
By \cref{lem_isoperimetric,cor_coarearange}
we have for each \(P\in\Q_0\) and \(P\subsetneq Q\) that
\begin{align*}
\int_{\bar\lambda_P}^{f_P}\lm{\{f>\lambda\}\cap P}\intd\lambda
&\leq
\sle(P)
\int_{\bar\lambda_P}^{f_P}\lm{\{f>\lambda\}\cap P}^{1-\f1d}\intd\lambda
\\
&\lesssim
\sle(P)
\int_{\bar\lambda_P}^{f_P}\sm{\mb{\{f>\lambda\}}\cap P}\intd\lambda
\\
&=
\sle(P)
\int_{x\in P:\bar\lambda_P<f(x)<f_P}|\nabla f|
\\
&=
\sle(P)
\int_Q|\nabla f|
\ind{P\times(\bar\lambda_P,f_P)}(x,f(x))\intd x
.
\end{align*}
We note that for any \(Q\in\Q\) we have \(\lambda_Q^\Q\geq\lambda_Q^\emptyset\)
and use \cref{cla_mostmasssparseabove}.
Then we apply the above calculation,
H\"older's inequality
and use that \((\bar\lambda_P,f_P)\) and \((\bar\lambda_Q,f_Q)\) are disjoint for \(P\subsetneq Q\),
\begin{align*}
&\sum_{Q\in\Q}
\Bigl(
\sle(Q)^{\f dp-1}(f_Q-\lambda_Q^\Q)
\Bigr)^p
\\
&\leq 2^{d+2}
\sum_{Q\in\Q}\Biggl(
\sle(Q)^{\f dp-1-d}
\sum_{P\in\Q_0,P\subsetneq Q}
\int_{\bar\lambda_P}^{f_P}\lm{\{f>\lambda\}\cap P}\intd\lambda
\Biggr)^p
\\
&\lesssim
\sum_{Q\in\Q}\Biggl(
\sle(Q)^{\f dp-1-d}
\int_Q|\nabla f|
\sum_{P\in\Q_0,P\subsetneq Q}\sle(P)
\ind{P\times(\bar\lambda_P,f_P)}(x,f(x))\intd x
\Biggr)^p
\\
&\leq
\sum_{Q\in\Q}\Biggl(
\sle(Q)^{\f dp-1-d+d(1-\f1p)}
\Biggl[
\int_Q|\nabla f|^p\biggl(
\sum_{P\in\Q_0,P\subsetneq Q}\sle(P)\ind{P\times(\bar\lambda_P,f_P)}(x,f(x))
\biggr)^p\intd x
\Biggr]^{\f1p}\Biggr)^p
\\
&=
\sum_{Q\in\Q}\Biggl(
\sle(Q)^{-1}\Biggl[
\sum_{P\in\Q_0,P\subsetneq Q}\sle(P)^p
\int_{(x,f(x))\in P\times(\bar\lambda_P,f_P)}|\nabla f|^p
\Biggr]^{\f1p}\Biggr)^p
\\
&=
\sum_{Q\in\Q}
\sle(Q)^{-p}
\sum_{P\in\Q_0,P\subsetneq Q}\sle(P)^p
\int_{(x,f(x))\in P\times(\bar\lambda_P,f_P)}|\nabla f|^p
\\
&=
\sum_{P\in\Q_0}
\sle(P)^p
\int_{x\in P:f(x)\in(\bar\lambda_P,f_P)}|\nabla f|^p
\sum_{Q\in\Q,Q\supsetneq P}\sle(Q)^{-p}
\\
&\leq\f1{2^p-1}
\sum_{P\in\Q_0}
\int_{x\in P:f(x)\in(\bar\lambda_P,f_P)}|\nabla f|^p\\
&\leq\f1{2^p-1}
\int|\nabla f|^p
.
\end{align*}
For \(p=1\) with \(\var f\) instead of \(\|\nabla f\|_1\)
we do not use \cref{cor_coarearange} or H\"older's inequality,
but interchange the order of summation first
and then apply \cref{lem_coarearange}.
\end{proof}

For a dyadic cube \(Q\) denote by \(\prt(Q)\) the dyadic parent cube of \(Q\).
\begin{lem}
\label{lem_disjointrepresentative}
Let \(1\leq p<d/\alpha\) and \(f\in L^p(\mathbb{R}^d)\) and let \(\varepsilon>0\).
Then there is a subset \(\tilde\Q_\alpha\) of \(\Q_\alpha\) such that
for each \(Q\in\Q_\alpha\) with \(\sle(Q)^\alpha f_Q>\varepsilon\) there is a \(P\in\tilde\Q_\alpha\) with
\(Q\subset\prt(P)\) and \(f_Q\leq 2^d f_P\).
Furthermore for any two \(Q,P\in\tilde\Q_\alpha\) one of the following holds.
\begin{enumerate}
\item
\label{it_parentsequal}
\(\prt(Q)=\prt(P)\).
\item
\label{it_parentsdisjoint}
\(\prt(Q)\) and \(\prt(P)\) don't intersect.
\item
\label{it_rangedisjoint}
\(f_Q/f_P\not\in(2^{-d},2^d)\).
\end{enumerate}
\end{lem}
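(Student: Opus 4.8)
The plan is to build $\tilde\Q_\alpha$ by a greedy/Vitali-type selection among the cubes in $\Q_\alpha$, selecting at each scale of the value $f_Q$ a maximal disjoint-parents subfamily, and then to verify the covering property and the trichotomy.

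First I would discard all cubes with $\sle(Q)^\alpha f_Q\le\varepsilon$; call the remaining family $\Q_\alpha^\varepsilon$. Since $f\in L^p$ with $p<d/\alpha$, the same computation as in \cref{lem_optimacube} (bounding $\sle(Q)^\alpha f_Q\le\sigma_d^{-1/p}\sle(Q)^{\alpha-d/p}\|f\|_p$, and using that $f_Q\to 0$ as $\sle(Q)\to 0$ at a.e.\ point, while $f_Q\le f_{\prt(Q)}\cdot 2^d$ forces cubes in $\Q_\alpha$ with large $f_Q$ to be small) shows that the sidelengths of cubes in $\Q_\alpha^\varepsilon$ are bounded above, so each cube has a well-defined parent and the parents also have bounded sidelength. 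Next, partition $\Q_\alpha^\varepsilon$ according to the dyadic block of $f_Q$: for $k\in\mathbb Z$ let $\Q^{(k)}=\{Q\in\Q_\alpha^\varepsilon: f_Q\in[2^{dk},2^{d(k+1)})\}$. Within each $\Q^{(k)}$, among the (finitely many distinct, up to a fixed bound, since sidelengths are bounded and $f_Q$ is bounded below on $\Q^{(k)}$) possible parent cubes, greedily select a maximal subfamily $\tilde\Q^{(k)}$ whose parents are pairwise disjoint, always preferring to keep, for each selected parent, one child cube $P$ realizing the membership; set $\tilde\Q_\alpha=\bigcup_k\tilde\Q^{(k)}$. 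Here maximality is in the sense that every $Q\in\Q^{(k)}$ has $\prt(Q)$ intersecting $\prt(P)$ for some selected $P\in\tilde\Q^{(k)}$; since distinct dyadic cubes either nest or are disjoint, $\prt(Q)\cap\prt(P)\ne\emptyset$ forces one to contain the other, and by choosing $P$ with the \emph{largest} possible parent among competitors one gets $\prt(Q)\subset\prt(P)$, hence $Q\subsetneq\prt(Q)\subset\prt(P)$, i.e.\ $Q\subset\prt(P)$; and $f_Q<2^{d(k+1)}\le 2^d\cdot 2^{dk}\le 2^d f_P$. This gives the covering statement.

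For the trichotomy, take $Q,P\in\tilde\Q_\alpha$. If they lie in the same block $\tilde\Q^{(k)}$ then by construction $\prt(Q)$ and $\prt(P)$ are disjoint (case \ref{it_parentsdisjoint}) unless $Q,P$ share a parent (case \ref{it_parentsequal}). If they lie in different blocks $k\ne k'$ with, say, $k'\ge k+1$, then $f_Q<2^{d(k+1)}\le 2^{dk'}\le f_P$, so $f_Q/f_P<1$, and in fact $f_Q/f_P< 2^{d(k+1)}/2^{dk'}\le 1$; combined with the lower endpoint this needs a touch more care: if $k'\ge k+1$ then $f_Q/f_P< 2^{d(k+1)}/2^{dk'}$, which is $\le 2^d/2^d=1$ only when $k'=k+1$, and then $f_Q/f_P$ could still be as large as just under $1<2^d$ — so this alone does not give $f_Q/f_P\notin(2^{-d},2^d)$. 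Here is where one must \emph{choose the blocks more cleverly}: instead of consecutive blocks $[2^{dk},2^{d(k+1)})$ I would use a finer-than-needed decomposition and argue that whenever neither \ref{it_parentsequal} nor \ref{it_parentsdisjoint} holds, one parent is strictly nested in the other, say $\prt(Q)\subsetneq\prt(P)$; but then $\prt(Q)\subset P$ is impossible since $P$ is a child of $\prt(P)$... I would instead go back to the selection and, within the greedy step, never select two cubes whose parents are nested \emph{regardless of block}. Concretely: process all of $\Q_\alpha^\varepsilon$ ordered by decreasing parent-sidelength (ties broken arbitrarily), and add $Q$ to $\tilde\Q_\alpha$ iff $\prt(Q)$ is disjoint from $\prt(P)$ for every already-selected $P$ with $f_P/f_Q\in(2^{-d},2^d)$. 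Then for selected $Q,P$: if their values are comparable ($f_Q/f_P\in(2^{-d},2^d)$) the selection rule forced $\prt(Q)\cap\prt(P)=\emptyset$ or $\prt(Q)=\prt(P)$ — one checks nested-but-unequal parents are excluded because the later-added cube would have been rejected. If their values are not comparable, case \ref{it_rangedisjoint} holds outright.

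The main obstacle I anticipate is exactly this last point: making the greedy selection rule precise enough that \ref{it_parentsequal}, \ref{it_parentsdisjoint}, \ref{it_rangedisjoint} are genuinely exhaustive, i.e.\ that one cannot have \emph{strictly} nested parents together with comparable values, while simultaneously keeping the covering property (every $Q\in\Q_\alpha^\varepsilon$ is represented). The key structural facts that make it work are that dyadic cubes nest-or-are-disjoint, that the $\varepsilon$-threshold plus $f\in L^p$, $p<d/\alpha$, bounds sidelengths so the greedy process is well-founded, and that $\Q_\alpha$ is \emph{upward sparse} in the sense that $\sle(P)^\alpha f_P<\sle(Q)^\alpha f_Q$ for $P\supsetneq Q$, which in particular forces $f_Q\le 2^d f_{\prt(Q)}$ and lets the parent $\prt(P)$ faithfully "carry" $Q$'s value. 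I would carry out the finite-at-each-scale bookkeeping carefully and present the selection as a single transfinite-free greedy pass, then verify the three cases in a short case analysis.
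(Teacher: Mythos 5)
Your final construction is correct, but it is organized differently from the paper's, so let me compare. The paper builds \(\tilde\Q_\alpha\) in stages indexed by value jumps: \(\tilde\Q_\alpha^0\) is the set of maximal cubes with \(\sle(Q)^\alpha f_Q>\varepsilon\) (maximal cubes exist precisely because of the sidelength bound you also derive from \(f\in L^p\), \(p<d/\alpha\), and the \(\varepsilon\)-threshold), and \(\tilde\Q_\alpha^{n+1}\) consists of the maximal cubes \(Q\in\Q_\alpha\) with \(f_Q>2^d\sup\{f_P:P\in\tilde\Q_\alpha^n,\ Q\subset\prt(P)\}\); the covering property is then proved by contradiction, taking a maximal uncovered cube and showing it would have been selected at the next stage, and the trichotomy follows because strictly nested parents force the larger cube to have been selected first, whence the smaller one's admission criterion gives \(f_Q>2^df_P\). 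Your single greedy pass in decreasing order of parent-sidelength, rejecting \(Q\) exactly when some already-selected \(P\) has \(f_P/f_Q\in(2^{-d},2^d)\) and \(\prt(P)\cap\prt(Q)\neq\emptyset\), buys a \emph{direct} covering proof (a rejected cube is covered by its rejector, since the rejector's parent is at least as large, so intersection forces \(\prt(Q)\subset\prt(P)\), and comparability gives \(f_Q<2^df_P\)) and makes the trichotomy immediate (for distinct selected cubes either the values are incomparable, which is \cref{it_rangedisjoint}, or the selection rule forced disjoint parents, which is \cref{it_parentsdisjoint}; \cref{it_parentsequal} is never actually needed). The bookkeeping you defer is unproblematic: parents at a fixed scale are pairwise disjoint or equal, each parent has at most \(2^d\) children in \(\Q_\alpha\), and the scales form a decreasing sequence from a finite maximum, so the pass is well-founded. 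Your first attempt via fixed dyadic value-blocks does fail for exactly the boundary-of-adjacent-blocks reason you identify, and you were right to discard it; one small inaccuracy elsewhere is that \(f_Q\leq 2^df_{\prt(Q)}\) is a triviality valid for every dyadic cube, while what membership in \(\Q_\alpha\) gives is the reverse-type inequality \(f_Q>2^\alpha f_{\prt(Q)}\) — neither point affects your argument.
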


\begin{proof}
Set \(\tilde\Q_\alpha^0\) to be the set of maximal cubes \(Q\) with \(\sle(Q)^\alpha f_Q>\varepsilon\).
For any dyadic cube \(Q\) with \(\sle(Q)^\alpha f_Q>\varepsilon\) we have
\[
\varepsilon
<
\sle(Q)^{\alpha-d}\int_Q f
\leq
\sle(Q)^{\alpha-d+d-\f dp}\Bigl(\int_Q f^p\Bigr)^{\f1p}
\leq
\sle(Q)^{\alpha-\f dp}\|f\|_p
\]
which implies
\begin{equation}
\label{eq_boundedsle}
\sle(Q)
<
(\|f\|_p/\varepsilon)^{(p^{-1}-\alpha/d)^{-1}}
.
\end{equation}
Hence
\[
\bigcup\tilde\Q_\alpha^0
=
\bigcup\{Q\in\Q_\alpha:\sle(Q)^\alpha f_Q>\varepsilon\}
.
\]
Assume we have already defined \(\tilde\Q_\alpha^n\).
Then define \(\tilde\Q_\alpha^{n+1}\) to be the set of maximal cubes \(Q\in\Q_\alpha\) with
\begin{equation}
\label{eq_fQmuchlarger}
f_Q
>
2^d\sup_{P\in\tilde\Q_\alpha^n:Q\subset\prt(P)}f_P
.
\end{equation}
Set \(\tilde\Q_\alpha=\tilde\Q_\alpha^0\cup\tilde\Q_\alpha^1\cup\ldots\).

Assume there is a cube \(Q\) with \(\sle(Q)^\alpha f_Q>\varepsilon\)
such that for all \(P\in\tilde\Q_\alpha\) with \(Q\subset\prt(P)\) we have \(f_Q>2^df_P\).
Then by \cref{eq_boundedsle} there is a maximal such cube \(Q\).
Furthermore there is a smallest \(P\in\tilde\Q_\alpha\) with \(Q\subset\prt(P)\)
and an \(n\) with \(P\in\tilde\Q_\alpha^n\).
But then \(Q\) is a maximal cube that satisfies \cref{eq_fQmuchlarger},
which implies \(Q\in\tilde\Q_\alpha^{n+1}\),
a contradiction.

If for \(Q,P\in\tilde\Q_\alpha\) neither \cref{it_parentsequal} nor \cref{it_parentsdisjoint} holds,
then after renaming we have \(\prt(Q)\subsetneq\prt(P)\).
Then \(P\) has been added to \(\tilde\Q_\alpha\) before \(Q\),
and since \(Q\subset\prt(P)\) this means \(f_Q>2^df_P\).
\end{proof}

\begin{lem}
\label{lem_disjointcubes}
Let \(1\leq p<\infty\) and \(f\in W^{1,p}(\mathbb{R}^d)\) and let \(\varepsilon>0\).
Let \(\Q\subset\Q_0\) be a set of dyadic cubes such that
\begin{enumerate}
\item
\label{it_largeraverage}
for each \(Q\in\Q\)
there is an ancestor cube \(p(Q)\supsetneq Q\) with \(\sle(p(Q))\leq\sle(Q)/\varepsilon\) and \(f_Q>2^\varepsilon f_{p(Q)}\),
\item
\label{it_disjoint}
and for any two distinct \(Q,P\in\Q\) such that \(p(Q)\) and \(p(P)\) intersect
we have \(f_Q/f_P\not\in(2^{-\varepsilon},2^\varepsilon)\).
\end{enumerate}
Then
\[
\Biggl(
\sum_{Q\in\Q}
(\sle(Q)^{\f dp-1}f_Q)^p
\Biggr)^{\f1p}
\lesssim_\varepsilon
\|\nabla f\|_p
.
\]
The endpoint \(p=\infty\) holds as well.
\end{lem}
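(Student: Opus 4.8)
The plan is to reduce this to \cref{pro_densitylow} by showing that under the hypotheses \cref{it_largeraverage,it_disjoint}, the ``correction term'' $\lambda_Q^\Q$ appearing in \cref{pro_densitylow} is a definite fraction of $f_Q$, say $\lambda_Q^\Q\leq\tfrac12 f_Q$, so that $f_Q-\lambda_Q^\Q\gtrsim f_Q$ and the desired sum is dominated by the sum in \cref{pro_densitylow}. Recall that $\lambda_Q^\Q$ is the minimum of $f_Q$ and the maximum of two quantities: the median-type threshold $\inf\{\lambda:\lm{\{f>\lambda\}\cap Q}<2^{-d-2}\lm Q\}$ and the supremum $\sup\{f_P:P\in\Q,\ P\supsetneq Q\}$ over strict ancestors of $Q$ that lie in $\Q$. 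First I would handle the second quantity: if $P\in\Q$ with $P\supsetneq Q$, then $p(P)$ and $p(Q)$ both contain a point of $Q$ and hence intersect, so by \cref{it_disjoint} we have $f_Q/f_P\notin(2^{-\varepsilon},2^\varepsilon)$; combined with $f_P>2^\varepsilon f_{p(P)}\supsetneq$ and the fact that $Q\subset P$ forces $f_P$ and $f_Q$ to be loosely related through the averaging chain, one concludes $f_P\leq 2^{-\varepsilon}f_Q$, so $\sup\{f_P:P\in\Q,\ P\supsetneq Q\}\leq 2^{-\varepsilon}f_Q\leq\tfrac12 f_Q$ once $\varepsilon$ is, say, capped at a constant (enlarging the implied constant otherwise).

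Next I would bound the median threshold. By hypothesis \cref{it_largeraverage} there is $p(Q)\supsetneq Q$ with $\sle(p(Q))\leq\sle(Q)/\varepsilon$ and $f_{p(Q)}<2^{-\varepsilon}f_Q$. Since $Q\subset p(Q)$ and $\lm{p(Q)}\leq\varepsilon^{-d}\lm Q$, we get
\[
\int_Q f
\leq
\int_{p(Q)} f
=
\lm{p(Q)} f_{p(Q)}
<
\varepsilon^{-d}\lm Q\, 2^{-\varepsilon}f_Q
,
\]
so $\fint_Q f<\varepsilon^{-d}2^{-\varepsilon}f_Q$; wait — that is the wrong direction since $\fint_Q f=f_Q$. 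The correct use is the reverse: from $f_{p(Q)}<2^{-\varepsilon}f_Q$ and $Q\subset p(Q)$ we instead estimate how much of $Q$ can have $f$ large. By Chebyshev, $\lm{\{f>\lambda\}\cap Q}\leq\lambda^{-1}\int_Q f=\lambda^{-1}\lm Q f_Q$, which only gives the trivial bound at $\lambda\sim f_Q$. So the median control must come from the \emph{ancestor} average: on $p(Q)$ we have $\int_{p(Q)}f=\lm{p(Q)}f_{p(Q)}$, hence $\lm{\{f>\lambda\}\cap p(Q)}\leq\lambda^{-1}\lm{p(Q)}f_{p(Q)}$, and since $Q\subset p(Q)$,
\[
\lm{\{f>\lambda\}\cap Q}
\leq
\lambda^{-1}\lm{p(Q)}f_{p(Q)}
<
\lambda^{-1}\varepsilon^{-d}\lm Q\, 2^{-\varepsilon}f_Q
.
\]
Choosing $\lambda=\tfrac12 f_Q$ makes the right side $<2\varepsilon^{-d}2^{-\varepsilon}\lm Q$, which is $<2^{-d-2}\lm Q$ provided $2^{-\varepsilon}<2^{-d-3}\varepsilon^d$, i.e.\ once $\varepsilon$ exceeds a dimensional constant $\varepsilon_0(d)$; for $\varepsilon\leq\varepsilon_0(d)$ the statement follows from the case $\varepsilon=\varepsilon_0(d)$ since the hypotheses only weaken as $\varepsilon$ grows, at the cost of an $\varepsilon$-dependent constant. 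Thus $\inf\{\lambda:\lm{\{f>\lambda\}\cap Q}<2^{-d-2}\lm Q\}\leq\tfrac12 f_Q$.

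Combining the two bounds, $\lambda_Q^\Q\leq\tfrac12 f_Q$ for every $Q\in\Q$, hence $f_Q-\lambda_Q^\Q\geq\tfrac12 f_Q$, and therefore
\[
\sum_{Q\in\Q}\bigl(\sle(Q)^{\f dp-1}f_Q\bigr)^p
\leq
2^p\sum_{Q\in\Q}\bigl(\sle(Q)^{\f dp-1}(f_Q-\lambda_Q^\Q)\bigr)^p
\lesssim_p
\|\nabla f\|_p^p
,
\]
by \cref{pro_densitylow} applied with $\Q\subset\Q_0$; for $p=1$ one uses the $\var f$ version of \cref{pro_densitylow}. The endpoint $p=\infty$ follows by the same argument together with the observation that a single term $\sle(Q)^{d/p-1}f_Q$ is at most the full $\ell^p$-sum, letting $p\to\infty$, or directly by noting $\sup_{Q\in\Q}\sle(Q)^{-1}f_Q\lesssim\|\nabla f\|_\infty$ from $f_Q-\lambda_Q^\Q\gtrsim f_Q$ and the $p=\infty$ case of \cref{pro_densitylow}. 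The main obstacle is the bookkeeping in the first paragraph: one must carefully exploit \cref{it_disjoint} to rule out that a $\Q$-ancestor $P\supsetneq Q$ has $f_P$ comparable to $f_Q$, using that $p(P)\cap p(Q)\neq\emptyset$, and this is where the precise form of the hypotheses matters; the median estimate in the second paragraph is routine Chebyshev once the ancestor $p(Q)$ is used rather than $Q$ itself.
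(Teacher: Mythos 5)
Your reduction to \cref{pro_densitylow} works only for the cubes the paper calls \(\Q_-\), and the rescue you propose for the remaining case does not work. The problem is your bound on the median threshold \(\inf\{\lambda:\lm{\{f>\lambda\}\cap Q}<2^{-d-2}\lm Q\}\). Chebyshev on the ancestor gives \(\lm{\{f>\lambda\}\cap Q}\leq\lambda^{-1}\varepsilon^{-d}2^{-\varepsilon}f_Q\lm Q\), which at \(\lambda=\tfrac12 f_Q\) is \(2\varepsilon^{-d}2^{-\varepsilon}\lm Q\); for small \(\varepsilon\) this is far larger than \(\lm Q\) and says nothing. Your fallback --- ``for \(\varepsilon\leq\varepsilon_0(d)\) the statement follows from the case \(\varepsilon=\varepsilon_0(d)\) since the hypotheses only weaken as \(\varepsilon\) grows'' --- has the monotonicity backwards: as \(\varepsilon\) grows, hypothesis \cref{it_largeraverage} requires a \emph{smaller} ancestor (\(\sle(p(Q))\leq\sle(Q)/\varepsilon\)) with a \emph{bigger} drop (\(f_Q>2^\varepsilon f_{p(Q)}\)), and hypothesis \cref{it_disjoint} excludes a \emph{larger} interval of ratios. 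Both hypotheses strengthen with \(\varepsilon\), so the lemma for \(\varepsilon_0(d)\) does not imply it for smaller \(\varepsilon\) --- and the small-\(\varepsilon\) case is exactly what the paper needs when it applies the lemma with \(p(Q)=\prt(Q)\) and drop factor \(2^\alpha\).

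Moreover, no argument can give \(\lambda_Q^\Q\leq\tfrac12 f_Q\) for all \(Q\): if \(f\approx f_Q\) on most of \(Q\), the median threshold is genuinely close to \(f_Q\) and \cref{pro_densitylow} contributes nothing for that cube. This is why the paper splits \(\Q\) into \(\Q_-\) (where \(\lm{\{f>2^{-\varepsilon/3}f_Q\}\cap Q}<2^{-d-2}\lm Q\), handled exactly as you propose via \cref{pro_densitylow}) and \(\Q_+\) (the complementary case), and for \(\Q_+\) runs a separate argument: since \(f>\lambda\) on at least a fixed fraction \(\varepsilon^d2^{-d-2}\) of \(p(Q)\) while Chebyshev caps that fraction at \(2^{-\varepsilon/3}\) for \(\lambda>2^{-2\varepsilon/3}f_Q\), the relative isoperimetric inequality (\cref{lem_isoperimetric}) forces \(\sm{\mb{\{f>\lambda\}}\cap p(Q)}\gtrsim_\varepsilon\lm{p(Q)}^{1-1/d}\) for \(\lambda\in(2^{-2\varepsilon/3}f_Q,2^{-\varepsilon/3}f_Q)\); integrating via the coarea formula and H\"older, and using \cref{it_disjoint} to make the regions \(\{x\in p(Q):f(x)\in(2^{-2\varepsilon/3},2^{-\varepsilon/3})f_Q\}\) essentially disjoint across \(Q\in\Q_+\), yields the bound. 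This entire half of the argument is missing from your proposal. (Your treatment of the ancestor supremum \(\sup\{f_P:P\in\Q,P\supsetneq Q\}\leq2^{-\varepsilon}f_Q\) is fine once you note \(f_P<f_Q\) from \(\Q\subset\Q_0\), and the \(p=\infty\) endpoint is proved in the paper directly by Sobolev--Poincar\'e on \(p(Q)\) rather than through \cref{pro_densitylow}, which is only stated for finite \(p\).)
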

\begin{proof}
We divide into two types of cubes and deal with them separately.
Denote
\begin{align*}
\Q_-
&=
\{Q\in\Q:
\lm{\{f>2^{-\varepsilon/3}f_Q\}\cap Q}<2^{-d-2}\lm Q
\}
,
\\
\Q_+
&=
\{Q\in\Q:
\lm{\{f>2^{-\varepsilon/3}f_Q\}\cap Q}\geq2^{-d-2}\lm Q
\}
.
\end{align*}
Let \(Q\in\Q_-\)
and recall \(\lambda_Q^{\Q}\) from \cref{pro_densitylow}.
Then since
\begin{align*}
\sup\{\lambda:\lm{\{f>\lambda\}\cap Q}<2^{-d-2}\lm Q\}
&\leq
2^{-\varepsilon/3}f_Q
,
\\
\sup\{f_P:P\in\Q,\ P\supsetneq Q\}
&\leq
2^{-\varepsilon}f_Q
\end{align*}
we have
\[
f_Q-\lambda_Q^{\Q}
\geq
(1-2^{-\varepsilon/3})f_Q
.
\]
Since \(\Q\subset\Q_0\) we conclude from \cref{pro_densitylow}
\[
\sum_{Q\in\Q_-}
\Bigl(
\sle(Q)^{\f dp-1}f_Q
\Bigr)^p
\leq
(1-2^{-\varepsilon/3})^{-p}
\sum_{Q\in\Q_-}
\Bigl(
\sle(Q)^{\f dp-1}
(f_Q-\lambda_Q^{\Q})
\Bigr)^p
\lesssim_{\varepsilon,p}
\|\nabla f\|_p^p
.
\]

Let \(Q\in\Q_+\) and \(\lambda>2^{-2\varepsilon/3}f_Q\).
Since by \cref{it_largeraverage} we have \(2^{\varepsilon/3}f_{p(Q)}<2^{-2\varepsilon/3}f_Q\),
we obtain from Chebyshev's inequality
\begin{equation}
\label{eq_chebychev}
\lm{p(Q)\cap\{f>\lambda\}}
\\
\leq
2^{-\varepsilon/3}\lm{p(Q)}
.
\end{equation}
Since \(Q\in\Q_+\), for \(\lambda<2^{-\varepsilon/3}f_Q\) we have
\begin{equation}
\label{eq_muchmassinpQ}
2^{-d-2}\varepsilon^d\lm{p(Q)}
\leq
2^{-d-2}\lm Q
\leq
\lm{Q\cap\{f>\lambda\}}
\leq
\lm{p(Q)\cap\{f>\lambda\}}
.
\end{equation}
So for all
\(
2^{-2\varepsilon/3}f_Q
\leq
\lambda
\leq
2^{-\varepsilon/3}f_Q
\)
we can conclude by the isoperimetric inequality \cref{lem_isoperimetric}
and \cref{eq_chebychev,eq_muchmassinpQ}
that
\begin{align*}
\sm{\mb{\{f>\lambda\}}\cap p(Q)}^d
&\gtrsim
\min\{\lm{p(Q)\cap\{f>\lambda\}},\lm{p(Q)\setminus\{f>\lambda\}}\}^{d-1}
\\
&\geq
(\lm{p(Q)}\min\{\varepsilon^d2^{-d-2},1-2^{-\varepsilon/3}\})^{d-1}
\\
&\gtrsim_\varepsilon
\lm{p(Q)}^{d-1}
.
\end{align*}
Thus for each \(Q\in\Q_+\) by \cref{cor_coarearange} and H\"older's inequality we have
\begin{align*}
\int_{2^{-2\varepsilon/3}f_Q}^{2^{-\varepsilon/3}f_Q}
\sle(p(Q))^{d-1}
\intd\lambda
&\lesssim_\varepsilon
\int_{2^{-2\varepsilon/3}f_Q}^{2^{-\varepsilon/3}f_Q}
\sm{\mb{\{f>\lambda\}}\cap p(Q)}
\intd\lambda
\\
&=
\int_{x\in p(Q):f(x)\in(2^{-2\varepsilon/3},2^{-\varepsilon/3})f_Q}
|\nabla f|
\\
&\leq
\sle(p(Q))^{d-\f dp}
\Biggl(
\int_{x\in p(Q):f(x)\in(2^{-2\varepsilon/3},2^{-\varepsilon/3})f_Q}
|\nabla f|^p
\Biggr)^{\f1p}
.
\end{align*}
Now we use \cref{it_disjoint}
and conclude
\begin{align*}
\sum_{Q\in\Q_+}
\Bigl(
\sle(Q)^{\f dp-1}f_Q
\Bigl)^p
&\lesssim_{\varepsilon,p}
\sum_{Q\in\Q_+}
\Bigl(
\sle(p(Q))^{\f dp-1}f_{p(Q)}
\Bigr)^p
\\
&\lesssim_{\varepsilon,p}
\sum_{Q\in\Q_+}
\Biggl(
\sle(p(Q))^{\f dp-d}
\int_{2^{-2\varepsilon/3}f_Q}^{2^{-\varepsilon/3}f_Q}
\sle(p(Q))^{d-1}
\intd\lambda
\Biggr)^p
\\
&\lesssim_{\varepsilon,p}
\sum_{Q\in\Q_+}
\int_{x\in p(Q):f(x)\in(2^{-2\varepsilon/3},2^{-\varepsilon/3})f_Q}
|\nabla f|^p
\\
&\leq
\int|\nabla f|^p
.
\end{align*}
For \(p=1\) with \(\var f\) instead of \(\|\nabla f\|_1\)
we use \cref{lem_coarearange}
instead of
\cref{cor_coarearange} and H\"older's inequality.
For \(p=\infty\) let \(Q\in\Q\).
Then by the Sobolev-Poincar\'e inequality we have
\begin{align*}
\|\nabla f\|_\infty
\geq
\|\nabla f\|_{L^\infty(p(Q))}
&\gtrsim
\sle(p(Q))^{-d-1}
\int_{p(Q)}|f-f_{p(Q)}|
\\
&\geq
\sle(Q)^{-d-1}
\varepsilon^{d+1}
\int_Q|f-f_{p(Q)}|
\\
&\geq
\sle(Q)^{-d-1}
\varepsilon^{d+1}
\int_Qf-f_{p(Q)}
\\
&=
\sle(Q)^{-1}
\varepsilon^{d+1}
(f_Q-f_{p(Q)})
\\
&\geq
\sle(Q)^{-1}
\varepsilon^{d+1}(1-2^{-\varepsilon})
f_Q
.
\end{align*}

\end{proof}

\begin{proof}[Proof of \cref{eq_finitedyadiclinear}]
Let \(\varepsilon>0\) and 
\(\tilde\Q_\alpha\) be the set of cubes from \cref{lem_disjointrepresentative}.
Let \(Q\in\Q_\alpha\).
Then there is a \(P\in\tilde\Q_\alpha\)
with \(Q\subset\prt(P)\) and \(f_Q\leq 2^d f_P\).
Then \(f_Q\leq 4^d f_{\prt(P)}\).
Thus since \(\sle(Q)^\alpha f_Q>\sle(\prt(P))^\alpha f_{\prt(P)}\)
we have \(\sle(Q)>4^{-d/\alpha}\sle(\prt(P))\).
Thus for each \(P\) there are at most \(c_\alpha\) many \(Q\in\Q_\alpha\)
with \(Q\subset\prt(P)\) and \(f_Q\leq2^df_P\).
We conclude
\begin{align*}
\sum_{Q\in\Q_\alpha,\sle(Q)^\alpha f_Q>\varepsilon}
\Bigl(
\sle(Q)^{\f dp-1}f_Q
\Bigr)^p
&
\leq
\sum_{P\in\tilde\Q_\alpha}
\sum_{Q\in\Q_\alpha,\ Q\subset\prt(P),\ f_Q\leq2^df_P}
\Bigl(
\sle(Q)^{\f dp-1}f_Q
\Bigr)^p
\\
&
\lesssim_{\alpha,p} c_\alpha
\sum_{P\in\tilde\Q_\alpha}
\Bigl(
\sle(P)^{\f dp-1}f_P
\Bigr)^p
.
\end{align*}

For each dyadic cube \(P\in\{\prt(Q):Q\in\tilde\Q_\alpha\}\)
pick a \(Q\in\tilde\Q_\alpha\) with \(P=\prt(Q)\)
such that for all \(Q'\in\tilde\Q_\alpha\) with \(P=\prt(Q')\)
we have \(f_{Q'}\leq f_Q\).
Denote by \(\hat\Q_\alpha\) the set of all such dyadic cubes \(Q\).
Then
\begin{align*}
\sum_{Q\in\tilde\Q_\alpha}
\Bigl(
\sle(Q)^{\f dp-1}f_Q
\Bigr)^p
&\leq
\sum_{P\in\{\prt(Q):Q\in\tilde\Q_\alpha\}}
\sum_{Q\in\tilde\Q_\alpha:P=\prt(Q)}
\Bigl(
\sle(Q)^{\f dp-1}f_Q
\Bigr)^p
\\
&\leq
\sum_{P\in\{\prt(Q):Q\in\tilde\Q_\alpha\}}
2^d\sum_{Q\in\hat\Q_\alpha:P=\prt(Q)}
\Bigl(
\sle(Q)^{\f dp-1}f_Q
\Bigr)^p
\\
&=
2^d\sum_{Q\in\hat\Q_\alpha}
\Bigl(
\sle(Q)^{\f dp-1}f_Q
\Bigr)^p
\end{align*}
We want to show that \cref{lem_disjointcubes} applies to \(\hat\Q_\alpha\) with \(p(Q)=\prt(Q)\).
Since \(\hat\Q_\alpha\subset\Q_\alpha\) we have \(\hat\Q_\alpha\subset\Q_0\) by \cref{rem_largeralpha},
and \cref{it_largeraverage} follows from \(f_Q>2^\alpha f_{\prt(Q)}\).
For \cref{it_disjoint} let \(Q,P\in\hat\Q_\alpha\) be distinct such that \(\prt(Q)\) and \(\prt(P)\) intersect.
Since we have \(\prt(Q)\neq\prt(P)\), \cref{lem_disjointrepresentative} implies \(f_Q/f_P\not\in(2^{-d},2^d)\).
Thus by \cref{lem_disjointcubes} we have
\[
2^d\sum_{Q\in\hat\Q_\alpha}
\Bigl(
\sle(Q)^{\f dp-1}f_Q
\Bigr)^p
\lesssim_{\alpha,p}
\|\nabla f\|_p^p
.
\]

We have proven for every \(\varepsilon>0\) that
\[
\sum_{Q\in\Q_\alpha,\sle(Q)^\alpha f_Q>\varepsilon}
\Bigl(
\sle(Q)^{\f dp-1}f_Q
\Bigr)^p
\lesssim_{\alpha,p}
\|\nabla f\|_p^p
\]
with constant independent of \(\varepsilon\).
So we can let \(\varepsilon\) go to zero and conclude \cref{eq_finitedyadiclinear}.

For the endpoint \(p=\infty\) let \(Q\in\Q_\alpha\).
Then we use \(f_{\prt(Q)}\leq2^{-\alpha}f_Q\) and copy the proof of the endpoint in \cref{lem_disjointcubes}
with \(p(Q)=\prt(Q)\) and \(\varepsilon=1/2\).
\end{proof}

\section{Hardy-Littlewood Maximal Operator}
\label{sec_uncentered}
In this section we prove \cref{theo_mfdr}.

\subsection{Making the Balls Disjoint}

\begin{lem}
\label{lem_disjointballs}
Let 
\(\M_\alpha\in\{\Mc_\alpha,\Mu_\alpha\}\)
and
\(1\leq p<d/(1+\alpha+\beta)\)
and
\(f\in L^p(\mathbb{R}^d)\)
and let 
\(\varepsilon>0\).
Then for any \(c_1\geq2,c_2\geq1\) there is a set of balls \(\widetilde\B\subset\B_\alpha\) such that
for two balls \(B,A\in\widetilde\B\) we have
\(c_1B\cap c_1A=\emptyset\) or \(f_A/f_B\not\in(c_2^{-1},c_2)\),
and furthermore
\begin{align*}
&
\int_\varepsilon^\infty\lambda^{(p^{-1}-(1+\alpha+\beta)/d)^{-1}-1}
\lmb{
\bigcup
\bigl\{
B\in\B_\alpha:r(B)^{\alpha+\beta}f_B>\lambda
\bigr\}
}
\intd \lambda
\\
&\lesssim_{\alpha,\beta,p,c_1,c_2}
\biggl(
\sum_{B\in\widetilde\B}
\Bigl(
r(B)^{\f dp-1}f_B
\Bigr)^p
\biggr)^{(1-p(1+\alpha+\beta)/d)^{-1}}
.
\end{align*}
\end{lem}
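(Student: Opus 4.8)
The plan is to run a Vitali-type selection level by level in the size of the averages $f_B$; the only place where $\alpha>0$ enters is the observation from the introduction that two balls of $\B_\alpha$ with comparable averages whose $c_1$-dilates overlap must have comparable radii. First I would discard the balls that the integral cannot see: put $\B^*=\{B\in\B_\alpha:r(B)^{\alpha+\beta}f_B>\varepsilon\}$. Since $f_B\le\sigma_d^{-1/p}r(B)^{-d/p}\|f\|_p$ and $\alpha+\beta-d/p<-1$ (this is the hypothesis $p<d/(1+\alpha+\beta)$), every $B\in\B^*$ obeys $r(B)\le R_0$ for some $R_0$ depending on $\varepsilon,\|f\|_p,d,\alpha,\beta,p$, and $f_B>0$. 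The geometric input I would record is: if $B,A\in\B_\alpha$ with $c_1B\cap c_1A\neq\emptyset$ and $\tfrac12\le f_A/f_B\le2$, then $\kappa\le r(A)/r(B)\le\kappa^{-1}$ with $\kappa=\kappa(d,\alpha,c_1)>0$. To prove it, assume $r(B)\le r(A)$, pick $y\in\cl B$ with $B\in\B_\alpha(y)$, and let $x$ be the center of $B$ (so $x=y$ in the centered case). Then $|x-a|\le c_1r(B)+c_1r(A)\le 2c_1r(A)$ for the center $a$ of $A$, so $A\subset B(x,(1+2c_1)r(A))$, and this ball is admissible for $\M_\alpha$ at $y$ (centered at $y$ in the centered case, and containing $y$ in general); hence $r(B)^\alpha f_B=\M_\alpha f(y)\ge((1+2c_1)r(A))^\alpha(1+2c_1)^{-d}f_A\ge\tfrac12(1+2c_1)^{\alpha-d}r(A)^\alpha f_B$, and cancelling $f_B>0$ gives $r(B)/r(A)\ge(2(1+2c_1)^{d-\alpha})^{-1/\alpha}$.

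For $j\in\mathbb Z$ set $\B^j=\{B\in\B^*:2^j\le f_B<2^{j+1}\}$, so $\B^*=\bigsqcup_j\B^j$, and let $\widetilde\B^j\subset\B^j$ be a maximal subfamily whose $c_1$-dilates are pairwise disjoint (such a family is automatically countable, being a disjoint family of open balls). If $A\in\B^j$, maximality gives $B\in\widetilde\B^j$ with $c_1B\cap c_1A\neq\emptyset$, and since $f_A/f_B\in(\tfrac12,2)$ the geometric input makes $r(A)$ and $r(B)$ comparable with constants depending only on $d,\alpha,c_1$; hence $A\subset C_1B$ and $r(B)^{\alpha+\beta}f_B\ge C_3^{-1}r(A)^{\alpha+\beta}f_A$ for constants $C_1,C_3$ depending only on $d,\alpha,\beta,c_1$. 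Here it is essential that the radii be comparable and not merely $r(B)\gtrsim r(A)$, since $\alpha+\beta$ may be negative (as for $\beta=-1$, $\alpha<1$). With $\widetilde\B_{\mathrm{full}}=\bigsqcup_j\widetilde\B^j$ it follows that, for every $\lambda\ge\varepsilon$, the open set $\bigcup\{B\in\B_\alpha:r(B)^{\alpha+\beta}f_B>\lambda\}$ lies inside $\bigcup\{C_1B:B\in\widetilde\B_{\mathrm{full}},\ C_3r(B)^{\alpha+\beta}f_B>\lambda\}$, and therefore has measure at most $C_1^d\sum_{B\in\widetilde\B_{\mathrm{full}},\,C_3r(B)^{\alpha+\beta}f_B>\lambda}\lm B$.

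Writing $q=(p^{-1}-(1+\alpha+\beta)/d)^{-1}\in(p,\infty)$ and integrating this bound against $\lambda^{q-1}$, Fubini together with the identity $d+(\alpha+\beta)q=(d/p-1)q$ gives
\begin{align*}
\int_\varepsilon^\infty\lambda^{q-1}\lmb{\bigcup\{B\in\B_\alpha:r(B)^{\alpha+\beta}f_B>\lambda\}}\intd\lambda
&\le C_1^d\sum_{B\in\widetilde\B_{\mathrm{full}}}\lm B\int_0^{C_3r(B)^{\alpha+\beta}f_B}\lambda^{q-1}\intd\lambda\\
&=\f{C_1^dC_3^q\sigma_d}q\sum_{B\in\widetilde\B_{\mathrm{full}}}\bigl(r(B)^{d/p-1}f_B\bigr)^q,
\end{align*}
and since $q\ge p$ the superadditivity of $t\mapsto t^{q/p}$ bounds this by $\lesssim_{\alpha,\beta,p,c_1}\bigl(\sum_{B\in\widetilde\B_{\mathrm{full}}}(r(B)^{d/p-1}f_B)^p\bigr)^{q/p}$, where $q/p=(1-p(1+\alpha+\beta)/d)^{-1}$ is exactly the exponent in the statement.

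It remains to pass from $\widetilde\B_{\mathrm{full}}$ to a subfamily satisfying the stated dichotomy, which I would do by colouring the levels. Let $L=1+\lceil\log_2c_2\rceil$ and, for $m\in\{0,\dots,L-1\}$, set $\widetilde\B_{(m)}=\bigsqcup_{j\equiv m\ (\mathrm{mod}\ L)}\widetilde\B^j$. Two distinct balls of $\widetilde\B_{(m)}$ whose $c_1$-dilates meet cannot lie in the same $\widetilde\B^j$, so their levels differ by at least $L$, whence $f_A/f_B\notin(c_2^{-1},c_2)$ because $2^{L-1}\ge c_2$. Since $\widetilde\B_{\mathrm{full}}$ is the disjoint union of the $L$ families $\widetilde\B_{(m)}$, taking $\widetilde\B=\widetilde\B_{(m^\star)}$ with $m^\star$ maximizing $\sum_{B\in\widetilde\B_{(m)}}(r(B)^{d/p-1}f_B)^p$ costs only the constant factor $L^{q/p}$ and produces a family with the required separation and the asserted integral bound. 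The main obstacle is the selection step: arranging that the chosen $c_1$-dilates are pairwise disjoint while one still covers $\bigcup\B_\alpha$ by bounded dilates of selected balls whose radii are \emph{comparable} to the covered ones. This is precisely where $\alpha>0$ is used, and for $\alpha=0$ comparable radii fail and the argument collapses.
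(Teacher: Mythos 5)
Your proof is correct, but it organizes the selection differently from the paper, so it is worth recording the comparison. The paper stratifies $\B_\alpha$ by \emph{radius}: it runs through dyadic radius scales $r(B)\in[1/2,1]2^{-n}K$ from largest to smallest, within each scale greedily picks balls with (almost) maximal average $f_B$ and pairwise disjoint $c_1$-dilates, and maintains recursive exclusion sets $\cl{\B^n}$ of balls already ``served'' by an earlier selection; the dichotomy ($c_1$-dilates disjoint or averages separated by $c_2$) is built directly into this exclusion mechanism, and the measure of each cluster $\bigcup U_{A,\lambda}$ is controlled by $\lm A$ because the served balls have radius comparable to $r(A)$. You instead stratify by the \emph{value} $f_B\in[2^j,2^{j+1})$, take a plain maximal (Vitali-type) disjoint subfamily in each level, and recover the $c_2$-separation only a posteriori by colouring the levels mod $L\sim\log_2 c_2$ and keeping the best congruence class, at the cost of a factor $L^{q/p}$ in the constant. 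Both arguments hinge on exactly the same use of $\alpha>0$ — namely that two balls of $\B_\alpha$ with comparable averages and intersecting $c_1$-dilates must have two-sided comparable radii, which is what lets you cover a ball by a bounded dilate of a selected ball with comparable $r^{\alpha+\beta}f$ even when $\alpha+\beta<0$ — and your verification of that fact (testing the optimal ball at a point $y$ with $B\in\B_\alpha(y)$ against the enlarged competitor $B(x,(1+2c_1)r(A))$, which is admissible in both the centered and uncentered cases) matches the paper's. Your version is arguably more transparent, since it avoids the recursive exclusion sets and the greedy ``almost maximal'' selection; the paper's version avoids the pigeonhole loss and produces a single family serving all scales at once. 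The remaining computation (Fubini, the exponent identity $d+(\alpha+\beta)q=(d/p-1)q$, and $\ell^p\hookrightarrow\ell^q$) is identical in both.
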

\begin{proof}
Let \(B\in\B_\alpha\) with \(r(B)^{\alpha+\beta}f_B>\varepsilon\).
Then
\[
\varepsilon
<
r(B)^{\alpha+\beta} f_B
\leq
r(B)^{\alpha+\beta}\lm B^{-1}\lm B^{1-1/p}
\Bigl(\int_B f^p\Bigr)^{1/p}
\leq
\sigma_d^{-1/p}
r(B)^{\alpha+\beta-d/p}
\|f\|_p
,
\]
which means that \(r(B)\) is bounded by
\[
K
=
(\sigma_d^{-1/p}\|f\|_p/\varepsilon)^{1/(d/p-\alpha-\beta)}
.
\]
Define \(\B^0=\{B\in\B_\alpha:r(B)\in[1/2,1]K\}\).
Then for all \(B\in\B^0\)
we have that \(r(B)^\alpha f_B\) is uniformly bounded.
Inductively define a sequence of balls as follows.
For \(B_0,\ldots,B_{k-1}\) already defined
choose a ball \(B_k\in\B^0\) such that \(c_1B_k\) is disjoint from \(c_1B_0,\ldots,c_1B_{k-1}\)
and which attains at least half of
\[
\sup\{
f_B:
B\in\B^0,c_1B\cap(c_1B_0\cup\ldots\cup c_1B_{k-1})=\emptyset
\}
\]
if one exists.
Set \(\widetilde{\B^0}=\{B_0,B_1,\ldots\}\).
Then for all \(B\in\B^0\) we have that \(c_1B\) intersects \(\bigcup\{c_1B:B\in\widetilde{\B^0}\}\).
Define
\[
\cl{\B^0}=\{B(x,r)\in\B_\alpha:\exists A\in\widetilde{\B^0}\ A\subset B(x,5c_1r(A)),\ f_{B(x,r)}\leq c_2f_A\}
.
\]
Then \(\B^0\subset\cl{\B^0}\).
We proceed by induction.
For each \(n\in\mathbb{N}\) define
\[
\B^n
=
\bigl\{
B\in\B_\alpha\setminus(\cl{\B^0}\cup\ldots\cup\cl{\B^{n-1}}):r(B)\in[1/2,1]2^{-n}K
\bigr\}
,
\]
as above greedily select a sequence \(\widetilde{\B^n}\)
of balls \(B\in\B^n\) with almost maximal \(f_B\) such that for every already selected \(A\in\widetilde{\B^n}\)
we have \(c_1B\cap c_1A=\emptyset\),
and define
\[
\cl{\B^n}
=
\bigl\{
B(x,r)\in\B_\alpha:\exists A\in\widetilde{\B^n}\ A\subset B(x,5c_1r(A)),\ f_{B(x,r)}\leq c_2f_A
\bigr\}
.
\]
Note that we have \(\B^n\subset\cl{\B^n}\).
Finally set
\(\widetilde\B=\widetilde{\B^0}\cup\widetilde{\B^1}\cup\ldots\).
For \(A\in\widetilde\B\), we denote
\[
U_{A,\lambda}
=
\bigl\{
B(x,r)\in\B_\alpha: A\subset B(x,5c_1r(A)),\ f_{B(x,r)}\leq c_2f_A,r^{\alpha+\beta} f_{B(x,r)}>\lambda
\bigr\}
.
\]
Let \(\lambda>\varepsilon\) and \(B\in\B_\alpha\) with \(r(B)^{\alpha+\beta}f_B>\lambda\).
Then there is an \(n\) with \(B\in\cl{\B^n}\),
and hence a \(A\in\widetilde{\B^n}\) with \(B\in U_{A,\lambda}\).
Let \(A\in\widetilde\B\) and \(B(x,r)\in U_{A,\lambda}\).
Then \(A\subset B(x,5c_1r(A))\).
Since \(r\in R_\alpha f(x)\) we have
\[
r^\alpha f_{B(x,r)}
\geq
(5c_1r(A))^\alpha f_{B(x,5c_1r(A))}
\geq
(5c_1r(A))^\alpha(5c_1)^{-d} f_A
\]
which implies
\[
r
\geq
(5c_1)^{1-d/\alpha}r(A)(f_A/f_{B(x,r)})^{1/\alpha}
\geq
(5c_1)^{1-d/\alpha}c_2^{1/\alpha}r(A)
.
\]
Since \(r\leq5c_1r(A)\)
it follows that
\[
r^\beta
\leq
r(A)^\beta
\begin{cases}
(5c_1)^\beta,
&
\beta\geq0
,
\\
(5c_1)^{\beta-d\beta/\alpha}c_2^{\beta/\alpha},
&
\beta<0
.
\end{cases}
\]
Together with
\[
r^\alpha f_{B(x,r)}
\leq
(5c_1r(A))^\alpha
c_2f_A
\]
we obtain
\[
r^{\alpha+\beta} f_{B(x,r)}
\leq c_3
r(A)^{\alpha+\beta}f_A
,
\]
where
\[
c_3
=
\begin{cases}
(5c_1)^{\alpha+\beta}
c_2,
&
\beta\geq0
,
\\
(5c_1)^{\alpha+\beta-d\beta/\alpha}c_2^{1+\beta/\alpha},
&
\beta<0.
\end{cases}
\]
Thus \(U_{A,\lambda}\) is only nonempty if
\[
\lambda
< c_3
r(A)^{\alpha+\beta}f_A
.
\]
We can conclude
\begin{align*}
&
\int_\varepsilon^\infty
\lambda^{(p^{-1}-(1+\alpha+\beta)/d)^{-1}-1}
\lmb{
\bigcup\{B\in\B_\alpha:r(B)^{\alpha+\beta}f_B>\lambda\}
}
\intd\lambda
\\
&=
\int_\varepsilon^\infty
\lambda^{(p^{-1}-(1+\alpha+\beta)/d)^{-1}-1}
\lmb{
\bigcup_{A\in\widetilde\B}
\bigcup U_{A,\lambda}
}
\intd\lambda
\\
&\leq
\sum_{A\in\widetilde\B}
\int_\varepsilon^\infty
\lambda^{(p^{-1}-(1+\alpha+\beta)/d)^{-1}-1}
\lmb{
\bigcup U_{A,\lambda}
}
\intd\lambda
\\
&=
\sum_{A\in\widetilde\B}
\int_\varepsilon^{c_3 r(A)^{\alpha+\beta}f_A}
\lambda^{(p^{-1}-(1+\alpha+\beta)/d)^{-1}-1}
\lmb{
\bigcup U_{A,\lambda}
}
\intd\lambda
\\
&\leq
\sum_{A\in\widetilde\B}
(5c_1)^d\lm A
\int_\varepsilon^{c_3 r(A)^{\alpha+\beta}f_A}
\lambda^{(p^{-1}-(1+\alpha+\beta)/d)^{-1}-1}
\intd\lambda
\\
&\leq
(1/p-(1+\alpha+\beta)/d)
\sum_{A\in\widetilde\B}
(5c_1)^d\lm A
\Bigl(
c_3 r(A)^{\alpha+\beta}f_A
\Bigr)^{(p^{-1}-(1+\alpha+\beta)/d)^{-1}}
\\
&=
(1/p-(1+\alpha+\beta)/d)
(5c_1)^d
c_3^{(p^{-1}-(1+\alpha+\beta)/d)^{-1}}
\sigma_d
\sum_{A\in\widetilde\B}
\Bigl(
r(A)^{\f dp-1}f_A
\Bigr)^{(p^{-1}-(1+\alpha+\beta)/d)^{-1}}
\\
&\leq
(1/p-(1+\alpha+\beta)/d)
(5c_1)^d
c_3^{(p^{-1}-(1+\alpha+\beta)/d)^{-1}}
\sigma_d
\biggl(
\sum_{A\in\widetilde\B}
\Bigl(
r(A)^{\f dp-1}f_A
\Bigr)^p
\biggr)^{(1-p(1+\alpha+\beta)/d)^{-1}}
.
\end{align*}
\end{proof}

\subsection{Transfer to Dyadic Cubes}

In this subsection we pass from disjoint balls to dyadic cubes
and then conclude \cref{theo_mfdr}
using a result from the dyadic setting.

\begin{rem}
\label{lem_dyadicgrids}
There are \(3^d\) dyadic grids \(\D_1,\ldots,\D_{3^d}\) such that each ball \(B\) is contained in a dyadic cube \(Q_B\in\D=\D_1\cup\ldots\cup\D_{3^d}\) with \(\sle(Q)\lesssim r(B)\).
\end{rem}

\begin{lem}
\label{lem_ballsimcube}
Let \(\M_\alpha\in\{\Mc_\alpha,\Mu_\alpha\}\)
and \(f\in L^1_\loc(\mathbb{R}^d)\).
Then for each \(B\in\B_\alpha\)
we have
\(f_{Q_B}\sim f_B\)
and
\(\sle(Q_B)\sim r(B)\).
\end{lem}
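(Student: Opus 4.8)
The plan is to reduce the statement to the size comparison $\sle(Q_B)\sim r(B)$ together with the single nontrivial estimate $f_{Q_B}\lesssim f_B$, which is the only place where the optimality of $B$ is used. First I would record the sizes: \cref{lem_dyadicgrids} already gives $\sle(Q_B)\lesssim r(B)$, and $B\subset Q_B$ forces $\sle(Q_B)\geq 2r(B)$, since a cube containing a ball of radius $r$ must have sidelength at least $2r$. Hence $\sle(Q_B)\sim r(B)$ and in particular $\lm{Q_B}\sim\lm B$. The lower bound $f_{Q_B}\gtrsim f_B$ is then immediate from $f\geq0$ (recall we reduced to nonnegative $f$) and $B\subset Q_B$: indeed $f_{Q_B}=\lm{Q_B}^{-1}\int_{Q_B}f\geq\lm{Q_B}^{-1}\int_B f=(\lm B/\lm{Q_B})f_B\gtrsim f_B$.

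For the upper bound I would fix a point $x$ with $B\in\B_\alpha(x)$, so that $x\in\cl B\subset\cl{Q_B}$ and $\M_\alpha f(x)=r(B)^\alpha f_B$ (the latter is part of the definition of $\Bc_\alpha(x)$ and of $\Bu_\alpha(x)$). Every point of $Q_B$ lies within distance $\sqrt d\,\sle(Q_B)$ of $x$, hence $Q_B$ is contained in the ball $A\seq B(x,2\sqrt d\,\sle(Q_B))$, which by the size comparison satisfies $r(A)\sim r(B)$. The key point is that $A$ is centered at $x$, so it is an admissible ball for both $\Mc_\alpha$ and $\Mu_\alpha$; therefore
\[
r(A)^\alpha f_A\leq\M_\alpha f(x)=r(B)^\alpha f_B ,
\]
which gives $f_A\leq(r(B)/r(A))^\alpha f_B\lesssim_\alpha f_B$ because $r(A)\sim r(B)$. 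Combining, $f_{Q_B}\leq\lm{Q_B}^{-1}\int_A f=(\lm A/\lm{Q_B})f_A\lesssim f_A\lesssim_\alpha f_B$, using $\lm A=\sigma_d r(A)^d\sim\sle(Q_B)^d=\lm{Q_B}$.

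I do not anticipate any genuine obstacle; the only point that needs a little care is treating the centered and uncentered operators uniformly, and this is automatic here because the enlarged ball $A$ may be chosen centered at $x$, so in both cases $r(A)^\alpha f_A$ is bounded by the value $\M_\alpha f(x)$ of the maximal function at $x$, with no separate argument for the two cases.
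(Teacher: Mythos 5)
Your proof is correct and follows essentially the same route as the paper: enclose $Q_B$ in a ball $A$ of comparable radius and use the optimality of $B$ (equivalently, the admissibility of $A$ at the point where the supremum is attained) to get $f_A\lesssim_\alpha f_B$, hence $f_{Q_B}\lesssim_\alpha f_B$. The only cosmetic difference is that the paper centers $A$ at the center of $B$ and invokes the maximality clause in the definition of $\B_\alpha$, whereas you center $A$ at $x$ and compare with $\M_\alpha f(x)$; both give the same estimate.
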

\begin{proof}
Let \(x\) be the center of \(B\),
and \(Q_B\) be the cube from \cref{lem_dyadicgrids},
and \(A=B(x,\sqrt d\sle(Q))\).
Then \(r(B)\sim\sle(Q_B)\sim r(A)\)
and
\(f_B\lesssim f_{Q_B}\lesssim f_A\).
Since \(B\in\B_\alpha\) we also have
\(r(A)^\alpha f_A<r(B)^\alpha f_B\)
and therefore conclude \(f_{Q_B}\lesssim f_A\lesssim f_B\).
\end{proof}

\begin{lem}
\label{lem_lowerancestor}
Let \(\M_\alpha\in\{\Mc_\alpha,\Mu_\alpha\}\)
and \(f\in L^1_\loc(\mathbb{R}^d)\).
For each \(\alpha>0\) and \(B\in\B_\alpha\) and cube \(P\supset Q_B\) we have
\(
\sle(P)^\alpha
f_P
\lesssim_\alpha
\sle(Q_B))^\alpha
f_{Q_B}
.
\)
\end{lem}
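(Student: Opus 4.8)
The plan is to play the ancestor cube $P$ against a ball centered at a point for which $B$ is optimal, exploiting that $\M_\alpha$ ignores such a ball. First I would fix $x\in\mathbb{R}^d$ with $B\in\B_\alpha(x)$, which exists because $B\in\B_\alpha=\bigcup_x\B_\alpha(x)$. Then $x\in\cl B\subset\cl{Q_B}\subset\cl P$, and since $\diam\cl P=\sqrt d\,\sle(P)$ the ball $A\seq B(x,2\sqrt d\,\sle(P))$ contains $P$ and is centered at $x$. In the centered case $A$ is one of the balls over which the supremum defining $\Mc_\alpha f(x)$ runs, so $r(A)^\alpha f_A\le\Mc_\alpha f(x)=r(B)^\alpha f_B$; in the uncentered case $A$ contains $x$, hence is admissible for $\Mu_\alpha f(x)$, and again $r(A)^\alpha f_A\le\Mu_\alpha f(x)=r(B)^\alpha f_B$.

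Next I would transfer this inequality to $P$ on the left and to $Q_B$ on the right. Since $P\subset A$ and $\lm A/\lm P=\sigma_d(2\sqrt d)^d$ we have $f_A\ge(\sigma_d(2\sqrt d)^d)^{-1}f_P$, while $r(A)^\alpha=(2\sqrt d)^\alpha\sle(P)^\alpha$, so
\[
\sle(P)^\alpha f_P\le\sigma_d(2\sqrt d)^{d-\alpha}\,r(A)^\alpha f_A\lesssim_\alpha r(A)^\alpha f_A.
\]
On the other side, \cref{lem_ballsimcube} gives $r(B)\sim\sle(Q_B)$ and $f_B\sim f_{Q_B}$, hence $r(B)^\alpha f_B\lesssim_\alpha\sle(Q_B)^\alpha f_{Q_B}$. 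Chaining the three estimates yields $\sle(P)^\alpha f_P\lesssim_\alpha\sle(Q_B)^\alpha f_{Q_B}$, which is the claim.

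I do not expect a real obstacle. The two points that need a little care are that $x$ actually lies in $\cl P$, so that a ball centered at $x$ of radius comparable to $\sle(P)$ swallows $P$ with a controlled radius, and that the competitor inequality $r(A)^\alpha f_A\le r(B)^\alpha f_B$ is available in both formulations — in the uncentered case because $A$ contains its own center $x$ and is therefore an admissible ball in the definition of $\Mu_\alpha f(x)$. The exponent $\alpha$ enters only through the constants $(2\sqrt d)^\alpha$ and $(2\sqrt d)^{d-\alpha}$, which are bounded by dimensional constants since $0<\alpha<d$; we keep the $\alpha$-dependence only to match the statement.
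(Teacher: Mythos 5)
Your proof is correct and takes essentially the same route as the paper's: both enclose \(P\) in a ball \(A\) of radius comparable to \(\sle(P)\) centered at a point where \(B\) is optimal, use the optimality of \(B\in\B_\alpha\) to get \(r(A)^\alpha f_A\le r(B)^\alpha f_B\), and finish with \cref{lem_ballsimcube}. The only cosmetic difference is that the paper centers \(A\) at the center of \(B\) (relying, in the uncentered case, on the maximality clause in the definition of \(\Bu_\alpha\)), whereas you center it at the point \(x\) with \(B\in\B_\alpha(x)\) and use the admissibility of \(A\) in the defining supremum directly.
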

\begin{proof}
For \(x\) the center of \(B\) define \(A=B(x,\sqrt d\sle(P))\).
Then from \(f_P\lesssim f_A\)
and \(r(A)^\alpha f_A<r(B)^\alpha f_B\)
and \(f_B\lesssim f_{Q_B}\)
we obtain
\(
\sle(P)^\alpha f_P
\lesssim
s^\alpha f_{B(x,s)}
<
r^\alpha f_{B(x,r)}
\lesssim_\alpha
\sle(Q_{B(x,r)})^\alpha f_{Q_{B(x,r)}}
.
\)
\end{proof}

\begin{proof}[Proof of \cref{theo_mfdr}]
For \(B\in\B_\alpha\) denote by \(P_B\) the largest cube that attains \(\max_{P\supset Q_B}f_P\).
Then \(P_B\in\Q_0\) and
by \cref{lem_ballsimcube,lem_lowerancestor}
we have \(\sle(P_B)\sim_\alpha r(B)\)
and \(f_{P_B}\sim_\alpha f_B\).
By \cref{lem_lowerancestor} there further exists a cube \(p(P_B)\supset P_B\) with \(f_{p(P_B)}\leq f_{P_B}/2\)
and \(\sle(p(P_B))\lesssim_\alpha\sle(P_B)\).

Let \(\varepsilon>0\) and let \(\widetilde\B\) be the set of balls from \cref{lem_disjointballs}.
By \cref{lem_ballsimcube,lem_lowerancestor} there are \(c_1,c_2\) such that for any two distinct \(B,A\in\widetilde\B\)
we have that \(p(P_B)\) and \(p(P_A)\) are disjoint
or \(f_{P_B}/f_{P_A}\not\in(1/2,2)\).
Define \(\Q=\{P_B:B\in\widetilde\B\}\).
By the layer cake formula
and \cref{lem_disjointballs,lem_ballsimcube} we have
\begin{align*}
&
\int(\M_{\alpha,\beta}f)^{(p^{-1}-(1+\alpha+\beta)/d)^{-1}}
\\
&=
{(p^{-1}-(1+\alpha+\beta)/d)^{-1}}
\int_0^\infty
\lambda^{(p^{-1}-(1+\alpha+\beta)/d)^{-1}-1}
\lm{\{\M_{\alpha,\beta} f>\lambda\}}
\intd\lambda
\\
&=
{(p^{-1}-(1+\alpha+\beta)/d)^{-1}}
\lim_{\varepsilon\rightarrow0}
\int_\varepsilon^\infty
\lambda^{(p^{-1}-(1+\alpha+\beta)/d)^{-1}-1}
\lmb{
\bigcup\{B\in\B_\alpha:r(B)^{\alpha+\beta}f_B>\lambda\}
}
\intd\lambda
\\
&
\lesssim_{\alpha,\beta,p}
\lim_{\varepsilon\rightarrow0}
\biggl(
\sum_{B\in\widetilde\B}
\Bigl(
r(B)^{\f dp-1}f_B
\Bigr)^p
\biggr)^{(1-p(1+\alpha+\beta)/d)^{-1}}
\\
&\sim_{\alpha,\beta,p}
\lim_{\varepsilon\rightarrow0}
\biggl(
\sum_{Q\in\Q}
\Bigl(
\sle(Q)^{\f dp-1}f_Q
\Bigr)^p
\biggr)^{(1-p(1+\alpha+\beta)/d)^{-1}}
.
\end{align*}

For each \(i=1,\ldots,3^d\) we apply \cref{lem_disjointcubes} to \(\Q\cap\D_i\) and obtain
\[
\sum_{Q\in\Q}
\Bigl(
\sle(Q)^{\f dp-1}f_Q
\Bigr)^p
=
\sum_{i=1}^{3^d}\sum_{Q\in\Q\cap\D_i}
\Bigl(
\sle(Q)^{\f dp-1}f_Q
\Bigr)^p
\lesssim_{\alpha,\beta,p}
\|\nabla f\|_p^p
.
\]

For the endpoint \(p=d/(1+\alpha+\beta)\) we use \(\|\M_{\alpha,\beta} f\|_\infty=\sup_{B\in\B_\alpha}r(B)^{\alpha+\beta}f_B\).
Let \(B\in\B_\alpha\).
Then \(f_{2B}\leq2^{-\alpha}f_B\) and we have by the Sobolev-Poincar\'e inequality
\begin{align*}
\|\nabla f\|_{d/(1+\alpha+\beta)}
\geq
\biggl(
\int_{2B}|\nabla f|^{d/(1+\alpha+\beta)}
\biggr)^{(1+\alpha+\beta)/d}
&\gtrsim
r(2B)^{\alpha+\beta-d}
\int_{2B}|f-f_{2B}|
\\
&\geq
2^{\alpha+\beta-d}
r(B)^{\alpha+\beta-d}
\int_B|f-f_{2B}|
\\
&\geq
2^{\alpha+\beta-d}
r(B)^{\alpha+\beta-d}
\int_B(f-f_{2B})
\\
&=
\sigma_d
2^{\alpha+\beta-d}
r(B)^{\alpha+\beta}
(f_B-f_{2B})
\\
&\geq
\sigma_d
2^{\alpha+\beta-d}
r(B)^{\alpha+\beta}
(1-2^{-\alpha})
f_B
.
\end{align*}
\end{proof}

\bibliographystyle{plain}
\bibliography{bib}

\begin{thebibliography}{10}

\bibitem{MR2868961}
J.~M. Aldaz, L.~Colzani, and J.~P\'{e}rez~L\'{a}zaro.
\newblock Optimal bounds on the modulus of continuity of the uncentered
  {H}ardy-{L}ittlewood maximal function.
\newblock {\em J. Geom. Anal.}, 22(1):132--167, 2012.

\bibitem{MR2539555}
J.~M. Aldaz and F.~J. P\'{e}rez~L\'{a}zaro.
\newblock Regularity of the {H}ardy-{L}ittlewood maximal operator on block
  decreasing functions.
\newblock {\em Studia Math.}, 194(3):253--277, 2009.

\bibitem{MR2276629}
J.~M. Aldaz and J.~P\'{e}rez~L\'{a}zaro.
\newblock Functions of bounded variation, the derivative of the one dimensional
  maximal function, and applications to inequalities.
\newblock {\em Trans. Amer. Math. Soc.}, 359(5):2443--2461, 2007.

\bibitem{beltran2019endpoint}
David Beltran and José Madrid.
\newblock Endpoint sobolev continuity of the fractional maximal function in
  higher dimensions.
\newblock {\em arXiv e-prints}, 2019.

\bibitem{beltran2019regularity}
David Beltran and José Madrid.
\newblock Regularity of the centered fractional maximal function on radial
  functions, 2019.

\bibitem{MR3912794}
David Beltran, Jo\~{a}o~Pedro Ramos, and Olli Saari.
\newblock Regularity of fractional maximal functions through {F}ourier
  multipliers.
\newblock {\em J. Funct. Anal.}, 276(6):1875--1892, 2019.

\bibitem{carneiro2019gradient}
Emanuel Carneiro and Cristian González-Riquelme.
\newblock Gradient bounds for radial maximal functions.
\newblock {\em arXiv e-prints}, 2019.

\bibitem{carneiro2020sunrise}
Emanuel Carneiro, Cristian González-Riquelme, and José Madrid.
\newblock Sunrise strategy for the continuity of maximal operators.
\newblock {\em arXiv e-prints}, 2020.

\bibitem{MR3091605}
Emanuel Carneiro and Kevin Hughes.
\newblock On the endpoint regularity of discrete maximal operators.
\newblock {\em Math. Res. Lett.}, 19(6):1245--1262, 2012.

\bibitem{MR3624402}
Emanuel Carneiro and Jos\'{e} Madrid.
\newblock Derivative bounds for fractional maximal functions.
\newblock {\em Trans. Amer. Math. Soc.}, 369(6):4063--4092, 2017.

\bibitem{MR3695894}
Emanuel Carneiro, Jos\'{e} Madrid, and Lillian~B. Pierce.
\newblock Endpoint {S}obolev and {BV} continuity for maximal operators.
\newblock {\em J. Funct. Anal.}, 273(10):3262--3294, 2017.

\bibitem{MR3063097}
Emanuel Carneiro and Benar~F. Svaiter.
\newblock On the variation of maximal operators of convolution type.
\newblock {\em J. Funct. Anal.}, 265(5):837--865, 2013.

\bibitem{MR3409135}
Lawrence~C. Evans and Ronald~F. Gariepy.
\newblock {\em Measure theory and fine properties of functions}.
\newblock Textbooks in Mathematics. CRC Press, Boca Raton, FL, revised edition,
  2015.

\bibitem{gonzlezriquelme2020bv}
Cristian González-Riquelme and Dariusz Kosz.
\newblock Bv continuity for the uncentered hardy--littlewood maximal operator.
\newblock {\em arXiv e-prints}, 2020.

\bibitem{MR2550181}
Piotr Haj\l{}asz and Jan Mal\'{y}.
\newblock On approximate differentiability of the maximal function.
\newblock {\em Proc. Amer. Math. Soc.}, 138(1):165--174, 2010.

\bibitem{MR2041705}
Piotr Haj\l{}asz and Jani Onninen.
\newblock On boundedness of maximal functions in {S}obolev spaces.
\newblock {\em Ann. Acad. Sci. Fenn. Math.}, 29(1):167--176, 2004.

\bibitem{MR3319617}
Toni Heikkinen, Juha Kinnunen, Janne Korvenp\"{a}\"{a}, and Heli Tuominen.
\newblock Regularity of the local fractional maximal function.
\newblock {\em Ark. Mat.}, 53(1):127--154, 2015.

\bibitem{MR1469106}
Juha Kinnunen.
\newblock The {H}ardy-{L}ittlewood maximal function of a {S}obolev function.
\newblock {\em Israel J. Math.}, 100:117--124, 1997.

\bibitem{MR1650343}
Juha Kinnunen and Peter Lindqvist.
\newblock The derivative of the maximal function.
\newblock {\em J. Reine Angew. Math.}, 503:161--167, 1998.

\bibitem{MR1979008}
Juha Kinnunen and Eero Saksman.
\newblock Regularity of the fractional maximal function.
\newblock {\em Bull. London Math. Soc.}, 35(4):529--535, 2003.

\bibitem{MR2328816}
Juha Kinnunen and Heli Tuominen.
\newblock Pointwise behaviour of {$M^{1,1}$} {S}obolev functions.
\newblock {\em Math. Z.}, 257(3):613--630, 2007.

\bibitem{MR3310075}
Ond\v{r}ej Kurka.
\newblock On the variation of the {H}ardy-{L}ittlewood maximal function.
\newblock {\em Ann. Acad. Sci. Fenn. Math.}, 40(1):109--133, 2015.

\bibitem{MR2280193}
Hannes Luiro.
\newblock Continuity of the maximal operator in {S}obolev spaces.
\newblock {\em Proc. Amer. Math. Soc.}, 135(1):243--251, 2007.

\bibitem{MR3800463}
Hannes Luiro.
\newblock The variation of the maximal function of a radial function.
\newblock {\em Ark. Mat.}, 56(1):147--161, 2018.

\bibitem{MR4001028}
Hannes Luiro and Jos\'{e} Madrid.
\newblock The variation of the fractional maximal function of a radial
  function.
\newblock {\em Int. Math. Res. Not. IMRN}, (17):5284--5298, 2019.

\bibitem{MR4029798}
Jos\'{e} Madrid.
\newblock Endpoint {S}obolev and {BV} continuity for maximal operators, {II}.
\newblock {\em Rev. Mat. Iberoam.}, 35(7):2151--2168, 2019.

\bibitem{MR3891939}
Carlos P\'{e}rez, Tiago Picon, Olli Saari, and Mateus Sousa.
\newblock Regularity of maximal functions on {H}ardy-{S}obolev spaces.
\newblock {\em Bull. Lond. Math. Soc.}, 50(6):1007--1015, 2018.

\bibitem{MR1898539}
Hitoshi Tanaka.
\newblock A remark on the derivative of the one-dimensional
  {H}ardy-{L}ittlewood maximal function.
\newblock {\em Bull. Austral. Math. Soc.}, 65(2):253--258, 2002.

\bibitem{weigt2020variationdyadic}
Julian Weigt.
\newblock Variation of the dyadic maximal function.
\newblock {\em arXiv e-prints}, 2020.

\bibitem{weigt2020variationcharacteristic}
Julian Weigt.
\newblock Variation of the uncentered maximal characteristic function.
\newblock {\em arXiv e-prints}, 2020.

\end{thebibliography}

\end{document}